\documentclass{article}

\usepackage{amsthm}
\usepackage[english]{babel}
\usepackage{amsmath}
\usepackage{amssymb}
\usepackage[%
]{hyperref}
\usepackage{booktabs}
\usepackage{enumerate}
\usepackage[left = 35mm, top = 40mm, bottom = 35mm, right = 25mm]{geometry}
\usepackage{multicol}
\usepackage{bbm}
\usepackage[]{todonotes}
\usepackage{setspace}
\usepackage{mathtools}

\theoremstyle{definition}
\newtheorem{definition}{Definition}
\newtheorem{Remark}[definition]{Remark}
\newtheorem{Example}[definition]{Example}

\theoremstyle{plain}
\newtheorem{theorem}[definition]{Theorem} 
\newtheorem{lemma}[definition]{Lemma}
\newtheorem{Proposition}[definition]{Proposition}
\newtheorem{Corollary}[definition]{Corollary}

\newtheorem*{Question*}{Question}
\newtheorem*{MainProblem}{Main Problem}
\newtheorem*{theorem*}{Theorem}

\newcommand{\soc}{\operatorname{soc}}

\newcommand{\Ann}{\operatorname{Ann}}
\newcommand{\N}{\mathbb{N}}

\newcommand{\Ker}{\operatorname{Ker}}
\newcommand{\lAnn}{\operatorname{lAnn}}
\newcommand{\rAnn}{\operatorname{rAnn}}
\newcommand{\Hom}{\operatorname{Hom}}
\newcommand{\Mat}{\operatorname{Mat}}
\newcommand{\id}{\ensuremath{\mathbbm{1}}}

\renewcommand{\Im}{\operatorname{Im}}

\setcounter{MaxMatrixCols}{20}
\parindent 0pt
\numberwithin{definition}{section}
\numberwithin{equation}{section}

\title{Ideals in the center of symmetric algebras\\[0.2cm]}
\author{\textbf{Sofia Brenner} \\
\normalsize\emph{Institute for Mathematics, Friedrich Schiller University Jena, Germany}\\
\texttt{sofia.bettina.brenner@uni-jena.de}\\[0.4cm]
\textbf{Burkhard K\"ulshammer} \\
\normalsize\emph{Institute for Mathematics, Friedrich Schiller University Jena, Germany}\\
\texttt{kuelshammer@uni-jena.de}}
\date{\vspace{-0.5cm}}

\begin{document}
\maketitle
\begin{abstract}
\noindent We study symmetric algebras $A$ over an algebraically closed field $F$ in which the Jacobson radical of the center of $A$, the socle of the center of $A$ or the Reynolds ideal of $A$ are ideals.
\end{abstract}
\bigskip

Let $F$ be an algebraically closed field and let $A$ be a finite-dimensional $F$-algebra. As customary, we denote its \emph{center} by $Z(A)$, its \emph{Jacobson radical} by $J(A)$ and its (left) \emph{socle}, the sum of all simple left ideals of $A$, by $\soc(A)$. In this paper, we are interested in the Jacobson radical $J(Z(A))$ and the socle $\soc(Z(A))$ of $Z(A)$ as well as the \emph{Reynolds ideal} $R(A) = \soc(A) \cap Z(A)$ of $A$. All three subspaces are ideals in $Z(A)$. We study the following properties: 

\begin{MainProblem}
	For which finite-dimensional $F$-algebras $A$ is 
	\begin{enumerate}[(P1)]
		\item the Jacobson radical $J(Z(A))$ of $Z(A)$, or
		\item the socle $\soc(Z(A))$ of $Z(A)$, or
		\item the Reynolds ideal $R(A)$ of $A$
	\end{enumerate}
		an ideal in $A$?
\end{MainProblem} 
In this paper, an ideal $I$ of $A$ is always meant to be a two-sided ideal of $A$ and we denote it by $I \trianglelefteq A$. Note that the property (P1) is equivalent to $A \cdot J(Z(A)) \subseteq J(Z(A))$, and the corresponding statement holds for (P2) and (P3). The properties (P1) -- (P3) are trivially satisfied whenever $A$ is commutative. Thus we can view these conditions as weak commutativity properties. 
\bigskip

The question (P1) has already been answered for group algebras and their $p$-blocks by Clarke \cite{CLA69}, Koshitani \cite{KOS78} and K\"ulshammer \cite{KUL20}. The latter paper additionally contains some results on arbitrary symmetric algebras. Moreover, Landrock \cite{LAN20} has proven that $J(Z(A))$ is an ideal of $A$ if $A$ is a symmetric local algebra of dimension at most ten. 
\bigskip

Here we prove that $J(Z(A))$ is an ideal in any symmetric
local algebra $A$ of dimension at most 11, and we present
an example of a symmetric local algebra $A$ of dimension 12
in which $J(Z(A))$ is not an ideal (see Theorem \ref{theo:dim11jzaideal}). In the same spirit, we prove that $\soc(Z(A))$ is an ideal in any symmetric local algebra $A$ of dimension at most 16, and we give an example of a symmetric local algebra $A$ of dimension 20 in which $\soc(Z(A))$ is not an ideal (see Theorem \ref{theo:dim16}). The dimensions 17, 18 and 19 remain open. We also show that, for any symmetric algebra $A$ such that $J(Z(A))$ is an ideal of $A$, and any ideal $I$ of $A$ such that $A/I$ is symmetric, also $J(Z(A/I))$ is an ideal of $A/I$ (see Proposition~\ref{prop:quotientalgebra}) and we prove an analogous result where the radical of the center is replaced by the socle of the center. Suppose that $A_1$ and $A_2$ are nonzero algebras. We prove that $\soc(Z(A_1 \otimes A_2))$ is an ideal in $A_1 \otimes A_2$ if and only if $\soc(Z(A_i))$ is an ideal in $A_i$ for $i=1,2$ (see Proposition \ref{prop:jacobsontensorproduct}). These results will be used in a sequel to this paper dealing with group algebras. 

\section{Finite-dimensional algebras}\label{sec:generalalgebras}
We first investigate the properties (P1) -- (P3) for arbitrary finite-dimensional algebras. We discuss some preliminary results as well as the relation between the three conditions. In the second part, we study the tensor product of two algebras. 

\subsection{Preliminaries}
Throughout this paper, we assume that $F$ is an algebraically closed field. However, many of our results hold for arbitrary fields. All occurring $F$-algebras are supposed to be finite-dimensional, associative and unitary. We write $F\{a_1, \ldots, a_n\}$ for the subspace of an $F$-algebra $A$ spanned by the elements $a_1, \ldots, a_n \in A$. For $a,b \in A$, we set $[a,b] \coloneqq ab -ba$. For subspaces $A_1, A_2$ of $A$, we set $[A_1, A_2] = F\{[a_1, a_2] \colon a_1 \in A_1,\ a_2 \in A_2\}$. The subspace $K(A) = [A,A]$ is called the \emph{commutator space} of $A$.

\begin{lemma}[{\cite[Equation (3)]{KUL91} and \cite[Remark 2.2]{KUL20}}]\label{lemma:commutatorsmallestideal}
Let $A$ be an $F$-algebra.
\begin{enumerate}[(i)]
\item We have $A \cdot K(A) = K(A) \cdot A$, and this is the smallest ideal $I$ of $A$ such that $A/I$ is commutative.
\item For any ideal $I$ of $A$, we have $K(A/I) = K(A) + I/I.$ 
\end{enumerate}
\end{lemma}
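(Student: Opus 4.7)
The plan for (i) is to first prove the equality $A \cdot K(A) = K(A) \cdot A$, then check that this common subspace is the smallest commutator-killing ideal. The essential tool is the Leibniz identity $[xy,z] = x[y,z] + [x,z]y$, which holds by direct expansion for all $x,y,z \in A$. Rearranging this as $x[y,z] = [xy,z] - [x,z]y$, I see that every generator of $A \cdot K(A)$ lies in $K(A) + K(A) \cdot A$. Because $A$ is unital, $K(A) = K(A) \cdot 1 \subseteq K(A) \cdot A$, so $A \cdot K(A) \subseteq K(A) \cdot A$; the reverse inclusion follows from the symmetric rearrangement $[x,z]y = [xy,z] - x[y,z]$.

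Denoting this common subspace by $L$, I would then verify that $L$ is a two-sided ideal: using $A \cdot A = A$ (again a consequence of unitality), one has $A \cdot L = A \cdot A \cdot K(A) = L$ and similarly $L \cdot A = L$. The quotient $A/L$ is commutative because $K(A) \subseteq L$ forces $[a,b] \equiv 0 \pmod{L}$ for all $a,b \in A$. For minimality, any ideal $I$ with $A/I$ commutative must contain every commutator, hence $K(A) \subseteq I$, and therefore $L = A \cdot K(A) \subseteq A \cdot I \subseteq I$.

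Part (ii) follows from the observation $\overline{[a,b]} = [\overline{a},\overline{b}]$, which shows that the generators of $K(A/I)$ are precisely the images under $A \to A/I$ of the generators of $K(A)$; this yields $K(A/I) = (K(A) + I)/I$ on the nose. There is no real obstacle in any of this — the whole argument rests on the Leibniz identity, and the remainder is bookkeeping. The one point worth flagging is that the unital hypothesis is used twice, both to ensure $K(A) \subseteq K(A) \cdot A$ in the inclusion step and to guarantee $A \cdot A = A$ when verifying that $L$ is an ideal.
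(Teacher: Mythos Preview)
Your proof is correct. The paper does not actually prove this lemma: it is stated with citations to \cite{KUL91} and \cite{KUL20} and no argument is given. Your approach via the Leibniz identity $[xy,z] = x[y,z] + [x,z]y$ is the standard one, and your handling of both the ideal property of $A \cdot K(A)$ and the minimality is clean and complete; there is nothing to compare against in the paper itself.
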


In our investigation, we mainly use the following criterion, which is stated in \cite[Lemma 2.1]{KUL20}:

\begin{lemma}\label{lemma:condsocleprod}
Let $A$ be an $F$-algebra.
	\begin{enumerate}[(i)]
		\item For an element $z \in Z(A),$ we have $Az \subseteq Z(A)$ if and only if $K(A) \cdot z = 0$ holds. 
		\item $J(Z(A))$ is an ideal of $A$ if and only if $J(Z(A)) \cdot K(A) = 0$ holds.
		\item $\soc(Z(A))$ is an ideal of $A$ if and only if $\soc(Z(A)) \cdot K(A) = 0$ holds.
		\item $R(A)$ is an ideal of $A$ if and only if $R(A) \cdot K(A) = 0$ holds.
	\end{enumerate}
\end{lemma}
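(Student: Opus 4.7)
The plan is to prove (i) first, since it is the engine for (ii)--(iv). Fixing $z \in Z(A)$ and arbitrary $a, b \in A$, I use the centrality of $z$ to rewrite
\[
b(az) - (az)b = b(za) - (za)b = z(ba) - z(ab) = -z\,[a,b].
\]
Thus $az$ commutes with every $b \in A$ precisely when $z$ annihilates every commutator, and since $z$ also commutes with $K(A)$ this reads $Az \subseteq Z(A) \iff K(A) \cdot z = 0$.

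Each subspace $I \in \{J(Z(A)), \soc(Z(A)), R(A)\}$ consists of central elements and is already an ideal of $Z(A)$, so the condition $I \trianglelefteq A$ reduces to $Az \subseteq I$ for every $z \in I$ (the one-sided inclusion $zA \subseteq I$ is automatic by centrality). The forward direction in each of (ii)--(iv) is then immediate from (i): if $I$ is an ideal of $A$, then for each $z \in I$ we have $Az \subseteq I \subseteq Z(A)$, so $K(A) \cdot z = 0$, and summing gives $I \cdot K(A) = 0$.

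For the backward directions, I fix $z \in I$ and $a \in A$ under the hypothesis $I \cdot K(A) = 0$. Part (i) already gives $az \in Z(A)$, and the task is to re-enter the specific subspace $I$. For (ii), $Z(A)$ is a finite-dimensional commutative $F$-algebra, so $J(Z(A))$ coincides with its nilradical; since $z$ is central, $(az)^n = a^n z^n$ vanishes for $n$ large, hence $az \in J(Z(A))$. For (iii), I use $\soc(Z(A)) = \Ann_{Z(A)}(J(Z(A)))$ and compute $J(Z(A))(az) = a \cdot J(Z(A)) \cdot z = 0$, where the first equality uses that $J(Z(A)) \subseteq Z(A)$ commutes with $a$. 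For (iv), $az \in Z(A)$ is in hand, and for $az \in \soc(A) = \{x \in A : J(A)x = 0\}$ I use that $J(A)$ is a right ideal of $A$ together with $z \in \soc(A)$: $J(A)(az) = (J(A)a)z \subseteq J(A) \cdot z = 0$.

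The main step requiring care is the closure argument in each backward direction, where knowing only $az \in Z(A)$ must be upgraded to $az \in I$. This forces one to invoke, for each $I$ separately, the right structural description (nilradical of $Z(A)$; annihilator of $J(Z(A))$ in $Z(A)$; annihilator of $J(A)$ in $A$)---all standard for finite-dimensional algebras, but the reason why the three parts do not collapse into a single uniform verification.
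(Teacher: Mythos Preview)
Your proof is correct. For part (i) you use the same commutator identity $[za,b] = z[a,b]$ as the paper (just written from the other side), and for parts (ii)--(iv) the paper simply cites \cite[Lemma 2.1]{KUL20} without reproducing the argument, whereas you supply self-contained proofs by combining (i) with the characterizations of $J(Z(A))$, $\soc(Z(A))$, and $\soc(A)$ as a nilradical and as annihilators; this is exactly the natural route and almost certainly what the cited reference does as well.
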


\begin{proof}
The statements (ii) -- (iv) are subject of \cite[Lemma 2.1]{KUL20}, so it remains to prove (i). First assume $Az \subseteq Z(A)$. For $a,b \in A$, we then have $z[a,b] = [za,b] = 0$ since $za \in Z(A)$ holds, so $z$ annihilates $K(A)$. Conversely, if $K(A) \cdot z = 0$ holds, we have $[za,b] = z[a,b] = 0$ for all $a,b \in A$ and hence $za \in Z(A)$ follows.
\end{proof}


We now study the relations between the properties (P1) -- (P3). Recall that the Jacobson radical of $Z(A)$ is given by $J(Z(A)) = J(A) \cap Z(A)$. For a subset $S \subseteq A$, we set $\lAnn_A(S)$ and $\rAnn_A(S)$ to be the left and the right annihilator of $S$ in $A$, respectively, and write $\Ann_A(S)$ if both sets coincide. By \cite[Corollary IV.6.14]{SKO11}, we have $\soc(A) = \rAnn_A(J(A))$ and $\soc(Z(A)) = \Ann_{Z(A)}(J(Z(A)))$. 

\begin{lemma}\label{lemma:raidealnecessary}
Let $A$ be an $F$-algebra. If $\soc(Z(A))$ is an ideal in $A$, then $R(A)$ is an ideal in $A$.
\end{lemma}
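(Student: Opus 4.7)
The plan is to reduce both the hypothesis and the conclusion to annihilation conditions via Lemma~\ref{lemma:condsocleprod}, and then show that $R(A)$ sits inside $\soc(Z(A))$, from which the implication follows immediately.

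First I would invoke Lemma~\ref{lemma:condsocleprod}(iii) to rephrase the assumption as $\soc(Z(A)) \cdot K(A) = 0$, and Lemma~\ref{lemma:condsocleprod}(iv) to see that it suffices to establish $R(A) \cdot K(A) = 0$. Thus the whole statement reduces to the inclusion
\[
R(A) \subseteq \soc(Z(A)).
\]

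To prove this inclusion, I would take an arbitrary element $r \in R(A) = \soc(A) \cap Z(A)$. Since $\soc(A) = \rAnn_A(J(A))$ by the stated result from Skowro\'nski--Yamagata, we have $J(A) \cdot r = 0$. In particular, using $J(Z(A)) = J(A) \cap Z(A) \subseteq J(A)$, we obtain $J(Z(A)) \cdot r = 0$. Because $r \in Z(A)$, this places $r$ in $\Ann_{Z(A)}(J(Z(A))) = \soc(Z(A))$, which is exactly what we need.

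With the inclusion in hand, the conclusion is immediate: $R(A) \cdot K(A) \subseteq \soc(Z(A)) \cdot K(A) = 0$, so $R(A)$ is an ideal of $A$ by Lemma~\ref{lemma:condsocleprod}(iv). I do not anticipate any real obstacle here; the only conceptual point is recognising that the containment $R(A) \subseteq \soc(Z(A))$ follows purely from the characterisations of the two socles as annihilators, combined with $J(Z(A)) \subseteq J(A)$.
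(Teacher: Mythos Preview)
Your proof is correct and follows essentially the same route as the paper: reduce both hypothesis and conclusion to the annihilation conditions of Lemma~\ref{lemma:condsocleprod}, then use the inclusion $R(A) \subseteq \soc(Z(A))$. The only difference is that the paper declares this inclusion ``clear'' without further comment, whereas you spell out the annihilator argument; your justification is accurate and matches the facts stated just before Lemma~\ref{lemma:raidealnecessary}.
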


\begin{proof}
	Let $\soc(Z(A))$ be an ideal of $A$. Lemma \ref{lemma:condsocleprod} then yields $\soc(Z(A)) \cdot K(A) = 0$. Clearly, $R(A)$ is contained in $\soc(Z(A))$, so we have $R(A) \cdot K(A) \subseteq \soc(Z(A)) \cdot K(A) = 0$ and hence $R(A)$ is an ideal of $A$ by Lemma \ref{lemma:condsocleprod}.
\end{proof}

The converse of Lemma \ref{lemma:raidealnecessary} does not hold. In fact, we will see below (Lemma \ref{lemma:Reynoldsbasic}) that $R(A)$ is an ideal in every symmetric local algebra $A$, but Example~\ref{ex:soc20} presents a symmetric local algebra $A$ in which $\soc(Z(A))$ is not an ideal. Also, there is
no immediate relation between the properties (P1) and (P2) or
(P3).

\begin{Example}\label{ex:firstexample}
	$\null$
	\begin{enumerate}[(i)]
		\item Let $F$ be an algebraically closed field of characteristic $\operatorname{char}(F) = 3$. We consider the free algebra $F \langle X_1, X_2, X_3\rangle$ in variables $X_1, X_2, X_3$ and its quotient algebra 
		$$A \coloneqq F\langle X_1, X_2, X_3 \rangle/ (X_i^3, X_i X_j + X_j X_i)_{i,j =1,2,3,\ i \neq j}.$$ An $F$-basis of $A$ is given by the set
		$\left\{x_1^{r_1} x_2^{r_2} x_3^{r_3} \colon r_1, r_2, r_3 \in \{0,1,2\}\right\},$ where $x_i$ denotes the image of $X_i$ in $A$ for $i =1,2,3$. It is easily verified that $x_1^2$ is contained in $J(Z(A))$, whereas $x_1^2 x_2$ is not. Hence $J(Z(A))$ is not an ideal of $A$. A short direct computation shows
		$$\soc(Z(A)) = F\bigl\{x_1 x_2^2 x_3^2, x_1^2 x_2 x_3^2, x_1^2 x_2^2 x_3, x_1^2 x_2^2 x_3^2\bigr\} \trianglelefteq A.$$ By Lemma \ref{lemma:raidealnecessary}, we also have $R(A) \trianglelefteq A$. 
		\item Now let $F$ be an arbitrary algebraically closed field and consider a non-commutative semisimple $F$-algebra $A$ (e.g.\ the matrix algebra $\Mat_{n}(F)$ for $n \geq 2$). We have $J(Z(A)) = 0 \trianglelefteq A$ and $\soc(Z(A)) = R(A) = Z(A) \not \trianglelefteq A$. 
	\end{enumerate}
\end{Example}

However, in the important special case that $A$ is a local $F$-algebra, we now show that condition (P1) implies (P2). This is mainly due to the following observation:

\begin{lemma}\label{lemma:socinj}
Let $A$ be a local $F$-algebra of dimension at least two. Then $\soc(Z(A)) \subseteq J(Z(A))$ holds.
\end{lemma}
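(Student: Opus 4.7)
The plan is to reduce the statement to a fact about commutative local Artinian algebras, after establishing that $Z(A)$ itself is local of dimension at least two.

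First I would observe that $Z(A)$ is a commutative local $F$-algebra. Since $A$ is local and $F$ is algebraically closed, $A/J(A) \cong F$. The restriction of the projection $A \to A/J(A)$ to $Z(A)$ is a surjective $F$-algebra homomorphism $Z(A) \to F$ with kernel $Z(A) \cap J(A) = J(Z(A))$, so $Z(A)/J(Z(A)) \cong F$. Hence $Z(A)$ is local.

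Next I would show $\dim Z(A) \geq 2$. Because $\dim A \geq 2$ and $A/J(A) \cong F$, we have $J(A) \neq 0$, so there is a minimal $n \geq 2$ with $J(A)^n = 0$. The nonzero subspace $J(A)^{n-1}$ is annihilated by $J(A)$ on both sides. Writing any $a \in A$ as $a = \lambda \cdot 1 + j$ with $\lambda \in F$ and $j \in J(A)$, we see that for $x \in J(A)^{n-1}$ both $xa$ and $ax$ equal $\lambda x$. Thus $J(A)^{n-1} \subseteq Z(A)$, and combining with $F \cdot 1 \subseteq Z(A)$ (whose intersection with $J(A)^{n-1} \subseteq J(A)$ is zero) gives $\dim Z(A) \geq 2$.

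Finally I would apply the following simple fact to $R \coloneqq Z(A)$: if $R$ is a commutative local Artinian $F$-algebra with $\dim R \geq 2$, then $\soc(R) \subseteq J(R)$. Indeed, assuming otherwise, pick $s \in \soc(R) \setminus J(R)$; since $J(R)$ is the unique maximal ideal, $s$ is a unit, but $\soc(R) = \Ann_R(J(R))$ forces $s \cdot J(R) = 0$, hence $J(R) = 0$, so $R$ is a field, contradicting $\dim R \geq 2$.

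I do not expect any serious obstacle; the one slightly nonformal point is the argument that $J(A)^{n-1}$ lies in the center, which however follows immediately from the decomposition $A = F \cdot 1 + J(A)$ together with $J(A) \cdot J(A)^{n-1} = J(A)^{n-1} \cdot J(A) = 0$.
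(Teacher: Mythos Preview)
Your proof is correct and follows essentially the same route as the paper: both arguments use $A = F\cdot 1 \oplus J(A)$ to show that the last nonzero power $J(A)^{\ell-1}$ lies in $Z(A)$, hence $J(Z(A)) \neq 0$, and then conclude $\soc(Z(A)) \subseteq J(Z(A))$ from the fact that $Z(A)$ is local. Your version is slightly more explicit in first verifying that $Z(A)$ is local (the paper leaves this implicit) and in spelling out the final step via the unit argument, whereas the paper simply notes that $\soc(Z(A))$ is then a proper ideal of the local ring $Z(A)$; but the substance is the same.
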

\begin{proof}	
Since $\dim A \geq 2$ holds, we have $J(A) \neq 0$. Let $\ell \in \N$ be the least positive integer with $J(A)^\ell = 0$. Since $A$ is of the form $A = F \cdot 1 \oplus J(A)$, we have $J(A)^{\ell -1} \subseteq Z(A)$. This implies $J(Z(A)) = J(A) \cap Z(A) \neq 0$. It follows that $\soc(Z(A))$ is a proper (nilpotent) ideal of $Z(A)$, which implies $\soc(Z(A)) \subseteq J(Z(A))$. 
\end{proof}

We therefore obtain the following implication:

\begin{Corollary}
Let $A$ be a local $F$-algebra. If $J(Z(A))$ is an ideal of $A$, then $\soc(Z(A))$ is an ideal of~$A$. 
\end{Corollary}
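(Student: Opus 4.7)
The plan is to chain together Lemma~\ref{lemma:socinj} and Lemma~\ref{lemma:condsocleprod}, treating the one-dimensional case separately. If $\dim A = 1$, then $A = F$ is commutative and the claim is trivial, so I would immediately reduce to $\dim A \geq 2$.

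In that case, Lemma~\ref{lemma:socinj} gives the containment $\soc(Z(A)) \subseteq J(Z(A))$. The hypothesis that $J(Z(A))$ is an ideal of $A$ together with Lemma~\ref{lemma:condsocleprod}(ii) yields $J(Z(A)) \cdot K(A) = 0$. Combining these two facts gives
\[
\soc(Z(A)) \cdot K(A) \subseteq J(Z(A)) \cdot K(A) = 0,
\]
and then Lemma~\ref{lemma:condsocleprod}(iii) lets me conclude that $\soc(Z(A))$ is an ideal of $A$.

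There is essentially no obstacle here: the real content has already been packaged into the two preceding lemmas, so the corollary is a one-line deduction once the trivial case is dispatched. The only point worth flagging is that the dimension hypothesis in Lemma~\ref{lemma:socinj} forces the case split at the very start; otherwise the proof is just the composition of the two cited results.
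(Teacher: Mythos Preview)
Your proposal is correct and matches the paper's proof almost verbatim: the paper also dispatches the case $A \cong F$ separately, then for $\dim A \geq 2$ chains Lemma~\ref{lemma:socinj} with Lemma~\ref{lemma:condsocleprod} to obtain $\soc(Z(A)) \cdot K(A) \subseteq J(Z(A)) \cdot K(A) = 0$ and concludes via Lemma~\ref{lemma:condsocleprod}.
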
 

\begin{proof}
	If $A \cong F$ holds, then $A$ is commutative and hence $\soc(Z(A)) = \soc(A)$ is an ideal in $A$. Now assume $\dim A \geq 2$.
	By Lemmas \ref{lemma:socinj} and \ref{lemma:condsocleprod}, we have $\soc(Z(A)) \cdot K(A) \subseteq J(Z(A)) \cdot K(A) = 0.$ Hence $\soc(Z(A))$ is an ideal of $A$ again by Lemma \ref{lemma:condsocleprod}. 
\end{proof}

\subsection{Tensor products}\label{sec:tensorproduct}
We now study the properties (P1) -- (P3) for the tensor product $A_1 \otimes A_2 \coloneqq A_1 \otimes_F A_2$ of two nonzero $F$-algebras $A_1$ and $A_2$. The following identities are well-known:
\begin{itemize}
\item $Z(A_1 \otimes A_2) = Z(A_1) \otimes Z(A_2)$
\item $J(A_1 \otimes A_2) = J(A_1) \otimes A_2 + A_1 \otimes J(A_2)$
\item $K(A_1 \otimes A_2) = K(A_1) \otimes A_2 + A_1 \otimes K(A_2).$
\end{itemize}

We now prove corresponding formulas for the socle and the Reynolds ideal of $A_1 \otimes A_2$, which are probably known as well:
\begin{lemma}\label{lemma:soctensor}
Let $A_1$ and $A_2$ be $F$-algebras.
\begin{enumerate}[(i)]
\item We have $\soc(A_1 \otimes A_2) = \soc(A_1) \otimes \soc(A_2).$ 
\item We have $R(A_1 \otimes A_2) = R(A_1) \otimes R(A_2)$. 
\end{enumerate}
\end{lemma}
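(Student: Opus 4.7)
My plan is to reduce both identities to two basic facts: first, the characterization $\soc(A) = \rAnn_A(J(A))$ recalled in the paper just before Lemma~\ref{lemma:raidealnecessary}; second, the standard intersection formula for tensor products of subspaces, namely $(U_1 \otimes V_2) \cap (V_1 \otimes U_2) = U_1 \otimes U_2$ whenever $U_i \subseteq V_i$, and more generally $(U_1 \otimes U_2) \cap (W_1 \otimes W_2) = (U_1 \cap W_1) \otimes (U_2 \cap W_2)$. This second fact follows by choosing a basis of $V_i$ extending a basis of $U_i \cap W_i$ adapted to both $U_i$ and $W_i$; I would invoke it as a known linear-algebra fact.

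For part (i), I would start from
\[
\soc(A_1 \otimes A_2) = \rAnn_{A_1 \otimes A_2}\bigl(J(A_1 \otimes A_2)\bigr) = \rAnn_{A_1 \otimes A_2}\bigl(J(A_1) \otimes A_2\bigr) \cap \rAnn_{A_1 \otimes A_2}\bigl(A_1 \otimes J(A_2)\bigr),
\]
using the formula $J(A_1 \otimes A_2) = J(A_1) \otimes A_2 + A_1 \otimes J(A_2)$ listed in the excerpt. The next step is to identify each of these right annihilators. I would show
\[
\rAnn_{A_1 \otimes A_2}\bigl(J(A_1) \otimes A_2\bigr) = \soc(A_1) \otimes A_2,
\]
and symmetrically for the second factor. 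The inclusion $\supseteq$ is immediate from $\soc(A_1) \cdot J(A_1) = 0$. For the reverse inclusion, containment in $\rAnn_{A_1 \otimes A_2}(J(A_1) \otimes 1)$ is enough, and the latter clearly equals $\rAnn_{A_1}(J(A_1)) \otimes A_2 = \soc(A_1) \otimes A_2$ (writing elements via a basis of $A_2$). Combining the two annihilator identifications and applying the tensor intersection formula gives $(\soc(A_1)\otimes A_2)\cap(A_1\otimes\soc(A_2))=\soc(A_1)\otimes\soc(A_2)$, as required.

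Part (ii) then follows quickly. I would write
\[
R(A_1 \otimes A_2) = \soc(A_1 \otimes A_2) \cap Z(A_1 \otimes A_2) = \bigl(\soc(A_1)\otimes\soc(A_2)\bigr) \cap \bigl(Z(A_1) \otimes Z(A_2)\bigr),
\]
using part (i) and the listed formula $Z(A_1 \otimes A_2) = Z(A_1) \otimes Z(A_2)$. Applying the general tensor intersection formula, this equals $(\soc(A_1)\cap Z(A_1))\otimes(\soc(A_2)\cap Z(A_2)) = R(A_1)\otimes R(A_2)$.

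The only mildly delicate point is the tensor intersection formula; everything else is direct manipulation with the listed identities. If a self-contained argument is preferred, the intersection step can be done by fixing bases $(e_i)$ of $A_1$ with subfamilies spanning $\soc(A_1)$, $Z(A_1)$, and $R(A_1)$ respectively, expanding a general element $\sum_i e_i \otimes a_i \in A_1 \otimes A_2$, and reading off the conditions on the $a_i \in A_2$, which is a short linear-algebra computation I would not grind through in the final write-up.
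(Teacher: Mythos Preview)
Your proposal is correct and follows essentially the same route as the paper: both arguments use $\soc=\rAnn(J)$, the decomposition $J(A_1\otimes A_2)=J(A_1)\otimes A_2+A_1\otimes J(A_2)$, the identification $\rAnn_{A_1\otimes A_2}(J(A_1)\otimes A_2)=\soc(A_1)\otimes A_2$ via a basis expansion, and then the tensor intersection formula; part (ii) is deduced identically from (i) and $Z(A_1\otimes A_2)=Z(A_1)\otimes Z(A_2)$. One tiny slip: for the easy inclusion you want $J(A_1)\cdot\soc(A_1)=0$ (right annihilator), not $\soc(A_1)\cdot J(A_1)=0$.
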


\begin{proof}
Let $n_1, n_2 \in \N_0$ denote the dimensions of $A_1$ and $A_2$, respectively.
\begin{enumerate}[(i)]
\item We first show $\soc(A_1) \otimes A_2 = \rAnn_{A_1 \otimes A_2}(J(A_1) \otimes A_2)$. For $s \in \soc(A_1)$, $a,b \in A_2$ and $j \in J(A_1),$ we have $ (j \otimes a) \cdot (s \otimes b) = js \otimes ab = 0 \otimes ab =  0,$ so $\soc(A_1) \otimes A_2$ is contained in the right annihilator of $J(A_1) \otimes A_2$. For the converse inclusion, we choose $F$-bases $\{v_1, \ldots, v_{n_1}\}$ of $A_1$ and $\{w_{1}, \ldots, w_{n_2}\}$ of $A_2$. The set $\{v_i \otimes w_k \colon i = 1, \ldots, n_1, \, k = 1, \ldots, n_2\}$ is an $F$-basis of $A_1 \otimes A_2.$ Consider an element $x \in \rAnn(J(A_1) \otimes A_2)$ and write $x \coloneqq \sum_{i = 1}^{n_1} \sum_{k = 1}^{n_2} \lambda_{ik} (v_i \otimes w_k)$ with $\lambda_{ik} \in F$ ($i = 1, \ldots, n_1$, $k = 1, \ldots, n_2$). For any $j \in J(A_1),$ we obtain
	$$0 = (j \otimes 1)\cdot x = \sum_{i = 1}^{n_1} \sum_{k = 1}^{n_2} \lambda_{ik} (j v_i \otimes w_k) = \sum_{k = 1}^{n_2} \left(\sum_{i = 1}^{n_1} \lambda_{ik} j  v_i\right) \otimes w_k.$$
	For $k = 1, \ldots, n_2$, this yields $0 = \sum_{i = 1}^{n_1} \lambda_{ik} j v_i  = j \cdot \sum_{i=1}^{n_1} \lambda_{ik} v_i $. We obtain $\sum_{i = 1}^{n_1} \lambda_{ik} v_i \in \soc(A_1)$, which implies $x \in \soc(A_1) \otimes A_2$. Analogously, we show $A_1 \otimes \soc(A_2) = \rAnn_{A_1 \otimes A_2} (A_1 \otimes J(A_2))$. This yields
	\begin{alignat*}{1}
	\soc(A_1 \otimes A_2) &= \rAnn_{A_1 \otimes A_2} (J(A_1) \otimes A_2 + A_1 \otimes J(A_2))  \\
	&= \rAnn_{A_1 \otimes A_2} (J(A_1) \otimes A_2) \cap \rAnn_{A_1 \otimes A_2}(A_1 \otimes J(A_2)) \\
	&= (\soc(A_1) \otimes A_2) \cap (A_1 \otimes \soc(A_2)) \\
	&= \soc(A_1) \otimes \soc(A_2). 
	\end{alignat*}
\item With (i), we obtain
\begin{alignat*}{1}
R(A_1 \otimes A_2) &= \soc(A_1 \otimes A_2) \cap Z(A_1 \otimes A_2) \\
&= \left(\soc(A_1) \otimes \soc(A_2)\right) \cap \left(Z(A_1) \otimes Z(A_2)\right) \\
&= \left(\soc(A_1) \cap Z(A_1)\right) \otimes \left(\soc(A_2) \cap Z(A_2)\right) \\
&= R(A_1) \otimes R(A_2). \tag*{\qedhere}
\end{alignat*}
\end{enumerate}
\end{proof}

\begin{lemma}\label{lemma:idealtensor}
Let $A_1$ and $A_2$ be $F$-algebras and let $U_1$ and $U_2$ be nonzero subspaces of $A_1$ and $A_2$, respectively. Then $U_1 \otimes U_2$ is an ideal in $A_1 \otimes A_2$ if and only if $U_i$ is an ideal in $A_i$ for $i =1,2$.
\end{lemma}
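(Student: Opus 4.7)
The plan is to handle the two implications separately. For the easy ``if'' direction, with both $U_i \trianglelefteq A_i$, I would simply check on elementary tensors that multiplying $u_1 \otimes u_2$ by $a_1 \otimes a_2$ on either side lands back in $U_1 \otimes U_2$, and extend by bilinearity.

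The content is in the ``only if'' direction, where I would reduce matters to the following separation lemma for pure tensors: for any $0 \neq u_2 \in U_2$ and any $x \in A_1$, the containment $x \otimes u_2 \in U_1 \otimes U_2$ is equivalent to $x \in U_1$. I would prove this by choosing a vector space complement $C_1$ of $U_1$ in $A_1$, so that the tensor distributes as $A_1 \otimes A_2 = (U_1 \otimes A_2) \oplus (C_1 \otimes A_2)$. Writing $x = u + c$ with $u \in U_1$ and $c \in C_1$, the inclusion $U_1 \otimes U_2 \subseteq U_1 \otimes A_2$ forces the $C_1 \otimes A_2$-component $c \otimes u_2$ of $x \otimes u_2$ to vanish, and since $u_2 \neq 0$ this yields $c = 0$.

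With this lemma in hand, I would fix $0 \neq u_2 \in U_2$ (which exists by the nonzero hypothesis on $U_2$) and note that for any $a \in A_1$ and $u_1 \in U_1$, the ideal assumption gives $au_1 \otimes u_2 = (a \otimes 1)(u_1 \otimes u_2) \in U_1 \otimes U_2$, whence $au_1 \in U_1$; right multiplication is symmetric, so $U_1 \trianglelefteq A_1$. Interchanging the roles of $A_1$ and $A_2$ (using $U_1 \neq 0$) yields $U_2 \trianglelefteq A_2$. The only mildly nontrivial step is the separation lemma, which is dispatched cleanly by the complement trick; I do not anticipate any real obstacle.
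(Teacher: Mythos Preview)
Your proposal is correct and follows essentially the same approach as the paper: the paper chooses bases $\{v_1,\ldots,v_{n_1}\}$ of $A_1$ adapted to $U_1$ (so that $v_{k_1+1},\ldots,v_{n_1}$ span a complement) and then compares coefficients in the tensor basis, which is exactly your complement--and--projection ``separation lemma'' written out in coordinates. The only cosmetic difference is that you phrase the argument via the direct sum $A_1 \otimes A_2 = (U_1 \otimes A_2) \oplus (C_1 \otimes A_2)$ rather than via explicit basis expansions.
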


\begin{proof}
Assume that $U_i$ is an ideal of $A_i$ for $i = 1,2$. For $a_1 \in A_1$ and $a_2 \in A_2$, we then have $$(a_1 \otimes a_2) \cdot (U_1 \otimes U_2) \subseteq U_1 \otimes U_2,$$ which shows that $U_1 \otimes U_2$ is an ideal of $A_1 \otimes A_2$.
\bigskip
		
Now assume conversely that $U_1 \otimes U_2$ is an ideal of $A_1 \otimes A_2$. We choose $F$-bases $\{v_1, \ldots, v_{n_1}\}$ of~$A_1$ and $\{w_1, \ldots, w_{n_2}\}$ of $A_2$ such that $\{v_1, \ldots, v_{k_1}\}$ and $\{w_1, \ldots, w_{k_2}\}$ are bases of $U_1$ and $U_2$ for some $k_i \in \{1, \ldots, n_i\}$ ($i = 1,2$), respectively. The set $\{v_{j_1} \otimes w_{j_2} \colon 1 \leq j_i \leq k_i \text{ for } i = 1,2\}$ is an $F$-basis for $U_1 \otimes U_2$. 
\bigskip
		
We show that $a_1 v_i \in U_1$ holds for all $a_1 \in A_1$ and $i = 1, \ldots, k_1.$ To this end, we set $a \coloneqq a_1 \otimes 1 \in A_1 \otimes A_2$ and $v \coloneqq v_i \otimes w_1 \in U_1 \otimes U_2.$ Since $av \in U_1 \otimes U_2$ holds by assumption, there exist coefficients $\lambda_{rt} \in F$ for $1 \leq r \leq k_1$, $1 \leq t \leq k_2$ with 
$av = \sum_{r,t} \lambda_{rt} v_r \otimes w_t.$
Expressing $a_1 v_i = \sum_{d= 1}^{n_1} \mu_d v_d$ in terms of the basis of $A_1$ (with $\mu_1, \ldots, \mu_{n_1} \in F$) yields 
$$av = (a_1 \otimes 1) \cdot (v_i \otimes w_1) = a_1 v_i \otimes w_1 = \left(\sum_{d=1}^{n_1} \mu_d v_d\right) \otimes w_1 = \sum_{d= 1}^{n_1} \mu_d (v_d \otimes w_1).$$ By comparing the coefficients in the two expressions for $av$, we obtain $\mu_d = 0 $ for $d > k_1$. This shows $a_1 v_i  \in U_1$, which proves that $U_1$ is an ideal of $A_1.$ For $U_2$, we proceed analogously.
\end{proof}

\begin{Proposition}\label{prop:jacobsontensorproduct}
Let $A_1$ and $A_2$ be nonzero $F$-algebras. 
\begin{enumerate}[(i)]
\item $\soc(Z(A_1 \otimes A_2))$ is an ideal of $A_1 \otimes A_2$ if and only if $\soc(Z(A_i))$ is an ideal of $A_i$ for $i = 1, 2$.
\item $R(A_1 \otimes A_2)$ is an ideal of $A_1 \otimes A_2$ if and only if $R(A_i)$ is an ideal of $A_i$ for $i =1,2$. 
\end{enumerate}
\end{Proposition}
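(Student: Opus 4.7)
My plan is to deduce both parts directly from Lemmas~\ref{lemma:soctensor} and~\ref{lemma:idealtensor}, once the relevant subspaces are written as tensor products over $F$ and checked to be nonzero. The real algebraic content is already packaged inside the two preparatory lemmas, so the proof should essentially amount to a bookkeeping step.

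For part (i), I would first apply Lemma~\ref{lemma:soctensor}(i) to the (commutative) nonzero finite-dimensional $F$-algebras $Z(A_1)$ and $Z(A_2)$ and combine it with the well-known identity $Z(A_1 \otimes A_2) = Z(A_1) \otimes Z(A_2)$. This yields
\[
\soc(Z(A_1 \otimes A_2)) \;=\; \soc\bigl(Z(A_1) \otimes Z(A_2)\bigr) \;=\; \soc(Z(A_1)) \otimes \soc(Z(A_2)).
\]
Next I would observe that $\soc(Z(A_i))$ is nonzero for each $i$: since $Z(A_i)$ is a nonzero finite-dimensional algebra (it contains $1$), its Jacobson radical is nilpotent, and either $J(Z(A_i)) = 0$ (in which case $Z(A_i) = \soc(Z(A_i))$) or the last nonzero power of $J(Z(A_i))$ is annihilated by $J(Z(A_i))$ and so lies in $\soc(Z(A_i))$. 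With both factors nonzero, Lemma~\ref{lemma:idealtensor} immediately gives the desired equivalence.

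For part (ii), Lemma~\ref{lemma:soctensor}(ii) already provides $R(A_1 \otimes A_2) = R(A_1) \otimes R(A_2)$. When both $R(A_i)$ are nonzero, Lemma~\ref{lemma:idealtensor} applies verbatim. In the degenerate case where one of the factors vanishes, the tensor product vanishes as well, and the equivalence reduces to the trivial observation that the zero subspace is an ideal. The only real point of friction in the whole argument is this verification that the tensor factors are nonzero, which is what is required to invoke Lemma~\ref{lemma:idealtensor}; beyond that, nothing additional is needed.
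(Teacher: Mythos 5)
Your approach matches the paper's exactly: deduce the tensor formula for $\soc(Z(-))$ and $R(-)$ from Lemma~\ref{lemma:soctensor}, then appeal to Lemma~\ref{lemma:idealtensor}. Your additional verification for part~(i) that $\soc(Z(A_i)) \neq 0$ is correct and in fact is a necessary check that the paper's one-line proof elides silently, since Lemma~\ref{lemma:idealtensor} explicitly demands nonzero factors. So for part~(i) your write-up is complete and, if anything, slightly more careful than the paper's.

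For part~(ii), however, your handling of the degenerate case is not correct. You say that when some $R(A_i) = 0$ ``the equivalence reduces to the trivial observation that the zero subspace is an ideal,'' but that is not what the biconditional asserts. If $R(A_1) = 0$ and $R(A_2)$ happens not to be an ideal of $A_2$, then $R(A_1 \otimes A_2) = R(A_1) \otimes R(A_2) = 0$ \emph{is} an ideal while the right-hand side of the equivalence fails, so the ``only if'' direction breaks down. This is not a hypothetical: taking $A_1$ to be the lower-triangular $2\times 2$ matrices over $F$ (where a direct check gives $R(A_1) = \soc(A_1) \cap Z(A_1) = 0$) and $A_2 = \Mat_2(F)$ (where $R(A_2) = F\cdot \id$ is not an ideal) gives a concrete instance. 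You were right to flag nonzero-ness as ``the only real point of friction,'' but you cannot dispatch it by calling the equivalence trivial; you need to verify $R(A_i) \neq 0$ or add it as a hypothesis. Unlike $\soc(Z(A_i))$, the Reynolds ideal of a general finite-dimensional algebra can indeed vanish, as the example shows. (To be fair, the paper's own one-line proof suppresses this issue too, and in the symmetric setting that is the paper's real concern $R(A_i) \neq 0$ automatically, so the gap is harmless there; but since the proposition is stated for arbitrary nonzero $F$-algebras, your instinct to worry about it was sound --- you just resolved the worry in the wrong direction.)
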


\begin{proof}
Since $\soc(Z(A_1 \otimes A_2)) = \soc(Z(A_1) \otimes Z(A_2)) = \soc(Z(A_1)) \otimes \soc(Z(A_2))$ and $R(A_1 \otimes A_2) = R(A_1) \otimes R(A_2)$ hold by Lemma \ref{lemma:soctensor}, the claim follows by Lemma \ref{lemma:idealtensor}.
\end{proof}

In contrast, the corresponding statement for the Jacobson radical of $Z(A_1 \otimes A_2)$ does not hold:

\begin{Example}
Consider the quotient algebra $A = F[X]/(X^2)$ of the polynomial ring $F[X]$ and let $M \coloneqq \Mat_2(A)$ be the algebra of $2\times 2$-matrices with entries in $A$. Note that $M \cong \Mat_2(F) \otimes_F A$ holds. We have $Z(M) = A \cdot \id \cong A$ and hence $J(Z(M)) = J(A) \cdot \id$. It is easily seen that $J(Z(M))$ is not closed under multiplication with arbitrary elements of $M$, so $J(Z(M))$ is not an ideal in $M$. On the other hand, we have $J(Z(A)) \trianglelefteq A$ since $A$ is commutative and $J(Z(\Mat_2(F)))$ is an ideal in $\Mat_2(F)$ since $\Mat_2(F)$ is semisimple (see Example \ref{ex:firstexample}). 

\end{Example}

\section{Symmetric algebras}\label{sec:symmetricalgebrasintro}
Let $F$ be an algebraically closed field. In this section, we investigate the main problem for symmetric algebras. In particular, we study the transition to certain quotient algebras.
\bigskip

A finite-dimensional $F$-algebra $A$ is called \emph{symmetric} if it admits a non-degenerate associative symmetric bilinear form $\beta \colon A \times A \to F$. The kernel of the associated linear form $\lambda \colon A \to F,\ a \mapsto \beta(1,a)$ then contains the commutator space $K(A),$ but no nonzero one-sided ideal of $A$ (see \cite[Theorem IV.2.2]{SKO11}). For a subspace $X$ of $A$, we consider its orthogonal space $X^\perp = \{a \in A \colon \beta(a,x) = 0 \text{ for } x \in X\}$ with respect to $\beta$.
\begin{lemma}[{\cite[Equations (28) -- (32), (35)]{KUL91}}]\label{lemma:propertiesperp}
	Let $A$ be a symmetric $F$-algebra and consider subspaces $X$ and $Y$ of $A.$ Then the following hold:
	\begin{enumerate}[(i)]
		\item $\dim X + \dim X^\perp = \dim A$.
		\item $(X^\perp)^\perp = X$.
		\item $Y \subseteq X$ implies $X^\perp \subseteq Y^\perp.$
		\item We have $(X \cap Y)^\perp = X^\perp + Y^\perp$ and $(X+Y)^\perp = X^\perp \cap Y^\perp.$
		\item For an ideal $I$ of $A$, we have $I^\perp = \lAnn_A(I) = \rAnn_A(I)$, and $I^\perp$ is an ideal of $A$ as well. In particular, we obtain $J(A) = \soc(A)^\perp$.
		\item $K(A)^\perp = Z(A)$.
	\end{enumerate}
\end{lemma}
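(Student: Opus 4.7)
The plan is to handle (i)--(iv) by standard linear algebra for non-degenerate bilinear forms, and then to extract (v) and (vi) from the associativity (and symmetry) of $\beta$.

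For (i), I would use that non-degeneracy gives an $F$-linear isomorphism $A \to A^*$, $a \mapsto \beta(a,\cdot)$, under which $X^\perp$ corresponds to the annihilator of $X$ in $A^*$; the dimension formula is the standard fact for annihilators in dual spaces. Statement (iii) is immediate from the definition. For (ii), the inclusion $X \subseteq (X^\perp)^\perp$ is obvious, and applying (i) twice shows the two spaces have equal dimension. For (iv), the identity $(X+Y)^\perp = X^\perp \cap Y^\perp$ is exactly the condition of being orthogonal to every element of $X+Y$; substituting $X^\perp, Y^\perp$ for $X, Y$ and using (ii) then yields the other identity.

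For (v), the main input is the associativity relation $\beta(ax,b) = \beta(a,xb)$. If $I \trianglelefteq A$ and $a \in I^\perp$, then for every $x \in I$ and $b \in A$ we have $xb \in I$, so $\beta(ax,b) = \beta(a,xb) = 0$; since this holds for all $b$, non-degeneracy forces $ax = 0$, giving $I^\perp \subseteq \lAnn_A(I)$. Conversely, using the special case $\beta(a,x) = \beta(1,ax)$, an element $a$ with $aI = 0$ is automatically in $I^\perp$. Symmetry of $\beta$ gives the analogous equality $I^\perp = \rAnn_A(I)$, and both annihilators are clearly ideals. To deduce $J(A) = \soc(A)^\perp$, I would apply (v) to the ideal $J(A)$, yielding $J(A)^\perp = \rAnn_A(J(A)) = \soc(A)$, and then take perpendiculars using (ii).

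For (vi), the key computation is
\[
\beta(a,[b,c]) = \beta(ab,c) - \beta(ac,b) = \beta(ab,c) - \beta(ba,c) = \beta([a,b],c),
\]
where associativity is used in the first step and symmetry (rewriting $\beta(ac,b) = \beta(b,ac) = \beta(ba,c)$) in the second. Hence $a \in K(A)^\perp$ iff $\beta([a,b],c) = 0$ for all $b,c \in A$, which by non-degeneracy is equivalent to $[a,b] = 0$ for every $b$, i.e.\ $a \in Z(A)$. None of this is difficult; the only mild obstacle is bookkeeping in (v), where one has to be careful to apply associativity in the correct form for each inclusion.
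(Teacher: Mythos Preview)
Your proof is correct and follows the standard route. Note, however, that the paper does not actually prove this lemma: it is stated with a citation to \cite{KUL91} (Equations (28)--(32), (35)) and no argument is given in the text. So there is nothing to compare against beyond observing that your argument is the expected one and would serve perfectly well as a self-contained proof in place of the citation.
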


The symmetric algebras in which the Reynolds ideal is an ideal can be characterized as follows:

\begin{lemma}\label{lemma:Reynoldsbasic}
	Let $A$ be a symmetric $F$-algebra. Then $R(A)$ is an ideal of $A$ if and only if $A$ is basic. In this case, we have $R(A) = \soc(A)$.
\end{lemma}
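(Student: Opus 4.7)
The plan is to use the form-duality $R(A) = (J(A) + K(A))^\perp$ — which combines $\soc(A) = J(A)^\perp$ and $Z(A) = K(A)^\perp$ from Lemma~\ref{lemma:propertiesperp} — to reduce the condition $R(A) \cdot K(A) = 0$ from Lemma~\ref{lemma:condsocleprod}(iv) to a structural question about the semisimple quotient $A/J(A)$. By Wedderburn, $A/J(A) \cong \bigoplus_{i} \Mat_{n_i}(F)$, and $A$ is basic precisely when every $n_i$ equals $1$, equivalently when $A/J(A)$ is commutative.

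For the ``if'' direction, I would assume $A$ is basic, so that $K(A) \subseteq J(A)$. Since $\soc(A) = J(A)^\perp$ is an ideal annihilated by $J(A)$ on both sides (Lemma~\ref{lemma:propertiesperp}(v)), this gives
\[
R(A) \cdot K(A) \subseteq \soc(A) \cdot J(A) = 0,
\]
and $R(A) \trianglelefteq A$ follows from Lemma~\ref{lemma:condsocleprod}(iv). The equality $R(A) = \soc(A)$ is then immediate: $K(A) \subseteq J(A)$ yields $R(A) = (J(A) + K(A))^\perp = J(A)^\perp = \soc(A)$.

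For the converse, suppose $R(A) \trianglelefteq A$, so that $R(A) \cdot K(A) = 0$. Associativity of the symmetrizing form $\beta$ rewrites this as
\[
\beta(R(A), K(A) \cdot A) = \beta(R(A) \cdot K(A), A) = 0,
\]
whence $K(A) \cdot A \subseteq R(A)^\perp = J(A) + K(A)$ by Lemma~\ref{lemma:propertiesperp}(ii). Now Lemma~\ref{lemma:commutatorsmallestideal} identifies the image of $K(A) \cdot A$ in $A/J(A)$ with the two-sided ideal of $A/J(A)$ generated by $K(A/J(A))$, while the image of $J(A) + K(A)$ is just $K(A/J(A))$; hence $K(A/J(A))$ is already a two-sided ideal of $A/J(A)$.

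Writing $A/J(A) = \bigoplus_i \Mat_{n_i}(F)$, the subspace $K(A/J(A)) = \bigoplus_i K(\Mat_{n_i}(F))$ decomposes blockwise, and each $K(\Mat_{n_i}(F))$ consists of the trace-zero matrices, of codimension one in the simple algebra $\Mat_{n_i}(F)$. A proper subspace of a simple algebra cannot be a nonzero two-sided ideal, so $K(\Mat_{n_i}(F)) = 0$, forcing $n_i = 1$ for every $i$; hence $A$ is basic. The main obstacle I anticipate is the very first step of the converse — translating the annihilator condition $R(A) \cdot K(A) = 0$ on the ``socle side'' into a containment purely within $A/J(A)$ — which is resolved by the $\perp$-calculus supplied by the symmetrizing form; once inside the Wedderburn decomposition, simplicity of matrix algebras closes the argument.
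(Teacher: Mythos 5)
Your argument is correct, and the converse direction is genuinely different from the paper's. The paper simply cites \cite[Remark 3.1]{KUL20} for ``$R(A)\trianglelefteq A \Rightarrow A$ basic (and $R(A)=\soc(A)$)'', whereas you supply a self-contained proof: you translate $R(A)\cdot K(A)=0$ through the $\perp$-calculus of Lemma~\ref{lemma:propertiesperp}, obtain $K(A)\cdot A\subseteq J(A)+K(A)$, reduce modulo $J(A)$ via Lemma~\ref{lemma:commutatorsmallestideal} to conclude that $K(A/J(A))$ is a two-sided ideal of the semisimple algebra $A/J(A)$, and then use the Wedderburn decomposition together with simplicity of each $\Mat_{n_i}(F)$ and the codimension-one fact $K(\Mat_{n_i}(F))=\mathfrak{sl}_{n_i}(F)$ to force $n_i=1$. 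That buys independence from the external reference at the cost of a slightly longer argument. For the easy ``basic $\Rightarrow R(A)\trianglelefteq A$'' direction, both you and the paper start from $K(A)\subseteq J(A)$; the paper then observes directly that $\soc(A)=J(A)^\perp\subseteq K(A)^\perp=Z(A)$ (so $R(A)=\soc(A)$ is an ideal), while you instead verify the annihilation criterion $R(A)\cdot K(A)\subseteq\soc(A)\cdot J(A)=0$ and recover $R(A)=\soc(A)$ separately via $R(A)=(J(A)+K(A))^\perp=J(A)^\perp$ — the same content, routed through Lemma~\ref{lemma:condsocleprod}(iv).
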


\begin{proof}
	Assume that $R(A)$ is an ideal of $A$. By \cite[Remark 3.1]{KUL20}, we obtain $R(A) = A \cdot R(A) = \soc(A)$ and $A$ is basic. Conversely, assume that $A$ is a basic, which is equivalent to $A/J(A)$ being commutative by \cite[Proposition II.6.19]{SKO11}. This implies $K(A) \subseteq J(A)$ and hence $\soc(A) =  J(A)^\perp \subseteq K(A)^\perp = Z(A)$ follows by Lemma \ref{lemma:propertiesperp}, so $R(A) = \soc(A) \cap Z(A) = \soc(A)$ is an ideal of~$A.$ 
\end{proof}

In the following, we therefore focus on the study of $J(Z(A))$ and $\soc(Z(A))$.

%

\subsection{Transition to quotient algebras}\label{sec:quotientssymmetric}
In this part, we consider various quotient algebras of $A$.

\begin{lemma}\label{lemma:idealsymmetricalternative}
Let $A$ be a symmetric $F$-algebra.
	\begin{enumerate}[(i)]
		\item $J(Z(A)) \trianglelefteq A$ holds if and only if $K(\bar{A})$ is an ideal of $\bar{A} \coloneqq A/\soc(A).$
		\item $\soc(Z(A)) \trianglelefteq A$ holds if and only if $K(\bar{A})$ is an ideal of $\bar{A} \coloneqq A/A \cdot J(Z(A)).$
	\end{enumerate}
\end{lemma}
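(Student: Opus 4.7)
The plan is to reduce both equivalences to a single framework in which $A \cdot K(A)$ is compared to an appropriate orthogonal subspace. By Lemma~\ref{lemma:condsocleprod}(ii)--(iii), the statements $J(Z(A)) \trianglelefteq A$ and $\soc(Z(A)) \trianglelefteq A$ are equivalent to $J(Z(A)) \cdot K(A) = 0$ and $\soc(Z(A)) \cdot K(A) = 0$, respectively. In each case the annihilating subspace $S$ lies in $Z(A)$, so $S \cdot K(A) = 0$ is equivalent to $S \cdot A \cdot K(A) = 0$. Since $A \cdot K(A)$ is a two-sided ideal by Lemma~\ref{lemma:commutatorsmallestideal}(i), Lemma~\ref{lemma:propertiesperp}(v) identifies its annihilator with $(A \cdot K(A))^\perp$, so (applying $(\cdot)^\perp$ and using items (ii), (iii) of Lemma~\ref{lemma:propertiesperp}) the condition becomes
\[
A \cdot K(A) \subseteq S^\perp.
\]

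For (i), I would compute $S^\perp = J(Z(A))^\perp$ by writing $J(Z(A)) = J(A) \cap Z(A) = \soc(A)^\perp \cap K(A)^\perp$ (using items (v) and (vi) of Lemma~\ref{lemma:propertiesperp}) and then applying (ii) and (iv) of the same lemma to obtain $J(Z(A))^\perp = \soc(A) + K(A)$. The resulting condition $A \cdot K(A) \subseteq K(A) + \soc(A)$ translates, via Lemma~\ref{lemma:commutatorsmallestideal}(ii), into the statement that $K(\bar A) = (K(A) + \soc(A))/\soc(A)$ is closed under multiplication by $\bar A = A/\soc(A)$, i.e.\ an ideal of $\bar A$.

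For (ii), the central computation is the analogous identity
\[
\soc(Z(A))^\perp = K(A) + A \cdot J(Z(A)).
\]
I would establish this by proving the equivalent statement $(K(A) + A \cdot J(Z(A)))^\perp = \soc(Z(A))$: Lemma~\ref{lemma:propertiesperp}(iv) and (vi) reduce the left-hand side to $Z(A) \cap (A \cdot J(Z(A)))^\perp$. Since $J(Z(A))$ is central, $A \cdot J(Z(A))$ is a two-sided ideal, and Lemma~\ref{lemma:propertiesperp}(v) together with the trivial observation $\rAnn_A(A \cdot J(Z(A))) = \rAnn_A(J(Z(A)))$ identifies its orthogonal with $\rAnn_A(J(Z(A)))$. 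Intersecting with $Z(A)$ yields $\Ann_{Z(A)}(J(Z(A))) = \soc(Z(A))$ (as recorded in the paragraph preceding Lemma~\ref{lemma:raidealnecessary}). Once this identity is in hand, the condition $A \cdot K(A) \subseteq K(A) + A \cdot J(Z(A))$ again becomes, by Lemma~\ref{lemma:commutatorsmallestideal}(ii), the assertion that $K(\bar A)$ is an ideal of $\bar A = A/A \cdot J(Z(A))$.

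I expect the main obstacle to be the identification $\soc(Z(A))^\perp = K(A) + A \cdot J(Z(A))$ in part (ii): the argument for (i) rests on the clean orthogonal description $J(A) = \soc(A)^\perp$, whereas for (ii) one has to recognise that the appropriate perp description of $\soc(Z(A))$ passes through the ideal $A \cdot J(Z(A))$ rather than through $J(Z(A))$ itself, so that the intersection with $Z(A) = K(A)^\perp$ recovers exactly the characterisation $\soc(Z(A)) = \Ann_{Z(A)}(J(Z(A)))$.
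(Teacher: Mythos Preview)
Your proposal is correct and follows essentially the same route as the paper: both arguments compute $J(Z(A))^\perp = \soc(A) + K(A)$ and $\soc(Z(A))^\perp = K(A) + A\cdot J(Z(A))$ and then reduce the ideal question to whether $A\cdot K(A)$ lies in the respective subspace. The only cosmetic difference is that the paper starts from the principle ``$X \trianglelefteq A$ iff $X^\perp \trianglelefteq A$'' (Lemma~\ref{lemma:propertiesperp}(v)) and then checks when $\soc(A)+K(A)$ resp.\ $K(A)+A\cdot J(Z(A))$ is an ideal, whereas you start from the annihilator criterion of Lemma~\ref{lemma:condsocleprod} and pass through $A\cdot K(A) \subseteq S^\perp$; these are two phrasings of the same orthogonality computation.
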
	

\begin{proof}
	By Lemma \ref{lemma:propertiesperp}, $J(Z(A))$ is an ideal of $A$ if and only if $J(Z(A))^\perp$ is. Moreover, we have
	$$J(Z(A))^\perp = (J(A) \cap Z(A))^\perp = J(A)^\perp + Z(A)^\perp = \soc(A) + K(A).$$ Since $\soc(A)$ is an ideal of $A$, $J(Z(A))^\perp$ is an ideal of $A$ if and only if $A \cdot K(A) \cdot A \subseteq K(A) + \soc(A)$ holds. Setting $\bar{A} \coloneqq A/\soc(A)$, this is equivalent to $K(\bar{A}) = K(A) + \soc(A)/\soc(A)$ being an ideal of $\bar{A}$ (see Lemma \ref{lemma:commutatorsmallestideal}). Similarly, $\soc(Z(A))$ is an ideal of $A$ if and only if $\soc(Z(A))^\perp$ is. Note that 
	$\Ann_A(J(Z(A))) = \Ann_A(A \cdot J(Z(A))) = (A \cdot J(Z(A)))^\perp$ follows by Lemma \ref{lemma:propertiesperp}\,(v), which yields
	$$\soc(Z(A)) = Z(A) \cap \Ann_A(J(Z(A)) = Z(A) \cap (A \cdot J(Z(A)))^\perp.$$ By Lemma \ref{lemma:propertiesperp}, this implies
	$\soc(Z(A))^\perp = Z(A)^\perp + A \cdot J(Z(A)) = K(A) + A \cdot J(Z(A)).$
	Hence $\soc(Z(A))$ is an ideal of $A$ if and only if $K(A) + A \cdot J(Z(A))$ is, which is equivalent to $K(\bar{A})$ being an ideal of $\bar{A} \coloneqq A/A \cdot J(Z(A))$. 
\end{proof}

\begin{Remark}\label{rem:ka}
	By Lemma \ref{lemma:propertiesperp}\,(v), $K(A)$ is an ideal of $A$ if and only if $K(A)^\perp = Z(A)$ is, i.e., if and only if $A$ is commutative. If $J(Z(A))$ or $\soc(Z(A))$ are ideals of $A$, then the corresponding algebras~$\bar{A}$ defined in Lemma \ref{lemma:idealsymmetricalternative} are therefore either commutative or non-symmetric.
\end{Remark}

We now study quotient algebras of $A$ which are again symmetric. Note that this is an additional condition which is not satisfied for arbitrary quotients of $A$:

\begin{lemma}\label{lemma:quotientalgebrasymmetric}
	Let $A$ be a symmetric $F$-algebra and let $I \trianglelefteq A$ be an ideal such that $A/I$ is symmetric with corresponding linear form $\bar{\lambda}$. Then there exists $z \in Z(A)$ with $I = (Az)^\perp$ such that $\bar{\lambda}(a+ I) = \lambda(az)$ holds for all $a \in A.$ Conversely, for $z \in Z(A)$, the algebra $A/(Az)^\perp$ is symmetric with respect to a linear form $\bar{\lambda}$ of the above shape.
\end{lemma}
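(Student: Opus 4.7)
The plan is to exploit the bijection between linear forms on $A$ and elements of $A$ afforded by the non-degenerate associative symmetric form $\beta$: every $F$-linear map $\mu \colon A \to F$ has a unique presentation $\mu(a) = \lambda(az) = \beta(a,z)$ for some $z \in A$, where the equality $\lambda(az) = \beta(a,z)$ comes from $\lambda(az) = \beta(1,az) = \beta(a,z)$ by associativity and symmetry of $\beta$. Both directions of the lemma then amount to translating the defining properties of a symmetric linear form (vanishing on the commutator space, and kernel containing no nonzero one-sided ideal) via this bijection into intrinsic conditions on $z$.

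For the forward direction, I would set $\mu \coloneqq \bar{\lambda} \circ \pi$ for the canonical surjection $\pi \colon A \to A/I$. Non-degeneracy of $\beta$ supplies a unique $z \in A$ with $\mu(a) = \lambda(az)$ for all $a \in A$. Since $A/I$ is symmetric, $\bar{\lambda}$ vanishes on $K(A/I) = (K(A)+I)/I$, so $\mu$ vanishes on $K(A)$; writing this as $\beta([a,b], z) = 0$ for all $a,b \in A$ gives $z \in K(A)^\perp = Z(A)$ by Lemma~\ref{lemma:propertiesperp}. For the identification $I = (Az)^\perp$, I would use the computation
\[
(Az)^\perp = \{a \in A \colon \beta(a, a'z) = 0 \text{ for all } a' \in A\} = \{a \in A \colon \mu(aA) = 0\}.
\]
Given $a \in (Az)^\perp$, the right ideal $aA + I$ of $A$ contains $I$ and is sent into $\ker \bar{\lambda}$ by $\pi$. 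Since $\ker \bar{\lambda}$ contains no nonzero right ideal of $A/I$, this forces $aA + I \subseteq I$, hence $a \in I$. The reverse inclusion $I \subseteq (Az)^\perp$ is immediate, since $I$ is an ideal and $\mu(I) = 0$, so that $\mu(Ia) \subseteq \mu(I) = 0$ for all $a \in A$.

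For the converse, given $z \in Z(A)$, I would set $I \coloneqq (Az)^\perp$ and define $\bar{\lambda}(a + I) \coloneqq \lambda(az)$. Well-definedness follows because $a \in (Az)^\perp$ in particular gives $\beta(a,z) = \lambda(az) = 0$. The associated bilinear form $\bar{\beta}(\bar{a}, \bar{b}) \coloneqq \lambda(abz)$ is manifestly associative, and is symmetric by
\[
\lambda(abz) = \lambda(bza) = \lambda(baz)
\]
where the first equality uses symmetry of $\lambda$ and the second uses $z \in Z(A)$. Non-degeneracy is exactly the statement that $\lambda(abz) = 0$ for all $b \in A$ implies $\beta(a, Az) = 0$, i.e.\ $a \in (Az)^\perp = I$. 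Finally, $I$ is an ideal of $A$ because $Az$ is (using $z \in Z(A)$) and Lemma~\ref{lemma:propertiesperp}\,(v) then applies.

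The main obstacle is the identification $I = (Az)^\perp$ in the forward direction: getting one inclusion requires carefully turning the abstract condition ``$\ker \bar{\lambda}$ contains no nonzero one-sided ideal of $A/I$'' into a concrete annihilation statement via $\beta$, and the natural right-ideal argument sketched above is the cleanest way I see to do this.
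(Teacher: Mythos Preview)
Your argument is correct in both directions. The paper itself does not supply a proof of this lemma; it merely refers to \cite[pages 429--430]{KUL91}, so there is no in-paper argument to compare against. Your approach---representing the pulled-back form $\bar{\lambda}\circ\pi$ as $\lambda(\,\cdot\,z)$ via non-degeneracy of $\beta$, reading off $z\in Z(A)$ from $K(A)^\perp=Z(A)$, and identifying $I$ with $(Az)^\perp$ through the ``no nonzero one-sided ideal in $\ker\bar{\lambda}$'' condition---is exactly the standard one and is what the cited reference does.
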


\begin{proof}
	The proof is given in \cite[pages 429 -- 430]{KUL91}.
\end{proof}

With this characterization, we can simplify the criterion for $J(Z(A)) \trianglelefteq A$ in case that $A$ is local.

\begin{lemma}\label{lemma:aicommutative}
	Let $A$ be a symmetric local algebra. Then $J(Z(A)) \trianglelefteq A$ holds if and only if for all ideals $0 \neq I \trianglelefteq A$ such that $A/I$ is symmetric, it follows that $A/I$ is commutative. 
\end{lemma}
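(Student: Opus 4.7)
The plan is to combine the characterization of symmetric quotient algebras from Lemma \ref{lemma:quotientalgebrasymmetric} with the criterion of Lemma \ref{lemma:condsocleprod}(ii) for $J(Z(A))$ to be an ideal, by translating ``$A/I$ is commutative'' into an annihilation statement via the orthogonal-space formalism of Lemma \ref{lemma:propertiesperp}. The key observation is that, because $A$ is local, the parameters $z$ which correspond to \emph{nonzero} symmetric quotients are exactly the elements of $J(Z(A))$.

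First I would parametrize the ideals involved. By Lemma \ref{lemma:quotientalgebrasymmetric}, every ideal $I\trianglelefteq A$ with $A/I$ symmetric has the form $I=(Az)^{\perp}$ for some $z\in Z(A)$, and conversely every such $z$ produces a symmetric quotient. By Lemma \ref{lemma:propertiesperp}(i), we have $I\neq 0$ if and only if $Az\neq A$. Since $A$ is finite-dimensional and local, $Az=A$ is equivalent to $z$ being a unit, which in turn is equivalent to $z\notin J(A)$; combined with $z\in Z(A)$ this gives $Az\neq A$ if and only if $z\in J(A)\cap Z(A)=J(Z(A))$. Hence the nonzero symmetric quotients of $A$ are precisely the algebras $A/(Az)^{\perp}$ with $z\in J(Z(A))$.

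Next I would translate commutativity. By Lemma \ref{lemma:commutatorsmallestideal}, the quotient $A/(Az)^{\perp}$ is commutative if and only if $K(A)\subseteq (Az)^{\perp}$, which by Lemma \ref{lemma:propertiesperp}(ii)--(iii) and (vi) is equivalent to $Az\subseteq K(A)^{\perp}=Z(A)$. By Lemma \ref{lemma:condsocleprod}(i), this last condition is equivalent to $K(A)\cdot z=0$. Putting the two steps together, the condition ``for every $0\neq I\trianglelefteq A$ with $A/I$ symmetric, $A/I$ is commutative'' is equivalent to
\[
K(A)\cdot z=0\quad\text{for every }z\in J(Z(A)),
\]
i.e., to $K(A)\cdot J(Z(A))=0$. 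By Lemma \ref{lemma:condsocleprod}(ii), this is precisely the criterion for $J(Z(A))\trianglelefteq A$, which completes the equivalence.

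The only real subtlety is the locality step in the second paragraph: one needs to be certain that $(Az)^{\perp}=0$ forces $z\notin J(Z(A))$, and that is where the hypothesis that $A$ is local enters in an essential way (otherwise one could have central non-units $z$ with $Az=A$, or central units lying in $Z(A)\setminus J(Z(A))$ producing $I=0$, and the clean correspondence between nonzero symmetric quotients and elements of $J(Z(A))$ would break). Every other step is a direct application of the results already established in the excerpt.
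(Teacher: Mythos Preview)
Your proof is correct. For the converse direction it is essentially the paper's argument: both parametrize the relevant ideals as $(Az)^\perp$ with $z\in J(Z(A))$, read commutativity of $A/(Az)^\perp$ as $K(A)\subseteq (Az)^\perp$, and conclude $J(Z(A))\cdot K(A)=0$.

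For the forward direction you take a more direct route: you simply run the same equivalence chain backwards, going from $J(Z(A))\cdot K(A)=0$ to $Az\subseteq Z(A)$ and hence to commutativity of $A/(Az)^\perp$. The paper instead invokes Lemma~\ref{lemma:idealsymmetricalternative}: from $J(Z(A))\trianglelefteq A$ it gets $K(A/\soc(A))\trianglelefteq A/\soc(A)$, then uses locality to show that any nonzero $I$ contains $\soc(A)$ (via $I^\perp\subseteq J(A)$), so $A/I$ is a further quotient of $A/\soc(A)$ and inherits $K(A/I)\trianglelefteq A/I$; Remark~\ref{rem:ka} then forces $A/I$ commutative. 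Your argument is shorter and needs only Lemmas~\ref{lemma:condsocleprod}, \ref{lemma:propertiesperp} and \ref{lemma:quotientalgebrasymmetric}; the paper's detour has the mild conceptual payoff of isolating the structural fact that $K(\bar A)$ is an ideal in $\bar A=A/\soc(A)$.

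One small quibble with your closing parenthetical: ``central non-units $z$ with $Az=A$'' cannot occur in any finite-dimensional algebra. The genuine obstruction in the non-local case is rather that there exist central non-units with $z\notin J(Z(A))$ (for instance nontrivial central idempotents), so the bijection between nonzero ideals $I$ with $A/I$ symmetric and elements $z\in J(Z(A))$ breaks down.
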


\begin{proof}
	First assume $J(Z(A)) \trianglelefteq A$ and let $I \trianglelefteq A$ be a nonzero ideal such that $A/I$ is symmetric. Since~$A$ is local, the ideal $I^\perp$ is contained in $J(A)$. This yields $\soc(A) \cdot I^\perp = 0$ and hence $\soc(A) \subseteq (I^\perp)^\perp = I$. The algebra $A/I$ is therefore isomorphic to a quotient of $\bar{A} \coloneqq A/\soc(A)$. By Lemma \ref{lemma:idealsymmetricalternative}, we have $K(\bar{A}) \trianglelefteq \bar{A}$ and hence $K(A/I) \trianglelefteq A/I$ follows by Lemma \ref{lemma:commutatorsmallestideal}. Since $A/I$ is symmetric, this implies $K(A/I) = 0$ (see Remark \ref{rem:ka}), so $A/I$ is commutative. Conversely, assume that $K(A/I) = 0$ holds for every ideal $ 0 \neq I \trianglelefteq A$ for which the quotient $A/I$ is symmetric. By Lemma \ref{lemma:quotientalgebrasymmetric}, we have $ I = (Az)^\perp$ for some $z \in J(Z(A))$. Lemma \ref{lemma:commutatorsmallestideal} then yields 
	$$K(A) \subseteq \bigcap_{z \in J(Z(A))} (Az)^\perp = \left(\sum_{z \in J(Z(A))} Az\right)^\perp = (A \cdot J(Z(A)))^\perp$$ and hence $J(Z(A)) \cdot K(A) \subseteq J(Z(A)) \cdot (A \cdot J(Z(A)))^\perp = 0$. By Lemma \ref{lemma:condsocleprod}, $J(Z(A))$ is an ideal of~$A$. 
\end{proof}

\subsection{Symmetric quotient algebras}
Let $A$ be a symmetric algebra. The aim of this section is to prove that the properties $J(Z(A)) \trianglelefteq A$ and $\soc(Z(A)) \trianglelefteq A$ are inherited by symmetric quotient algebras of $A$. 
\bigskip

Let $I$ be an ideal of $A$ such that $\bar{A} \coloneqq A/I$ is symmetric with respect to a bilinear form $\bar{\beta} \colon \bar{A} \times \bar{A} \to F$. By Lemma \ref{lemma:quotientalgebrasymmetric}, we have $I = (Az)^\perp$ for some $z \in Z(A)$ and a symmetrizing linear form $\bar{\lambda} \colon \bar{A} \to F$ is given by $\bar{\lambda}(a+I) = \lambda(az)$ for all $a \in A$. 
Let $\nu \colon A \to \bar{A},\ a \mapsto \bar{a} \coloneqq a + I$ be the canonical projection map. In the following, we consider its adjoint map $\nu^* \colon \bar{A} \to A$ defined by requiring
\begin{equation}\label{eq:nustardef}
\beta(\nu^*(\bar{x}), y) = \bar{\beta}(\bar{x},\nu(y)) \text{ for all } x,y \in A.
\end{equation}

\begin{lemma}\label{lemma:propnustar}
The map $\nu^*$ has the following properties:
\begin{enumerate}[(i)]
\item It is explicitly given by $\nu^*(\bar{x}) = xz$ for all $x \in A$. 
\item For all $x,y \in A$, we have $\nu^*(\bar{x}) \cdot y = \nu^*(\bar{x} \cdot \bar{y})$ and $x \cdot \nu^*(\bar{y})  =\nu^*(\bar{x} \cdot \bar{y})$.
\item The map $\nu^*$ is injective. Thus it induces an
isomorphism of $A$-bimodules between $\bar{A}$
and $Az$.

\end{enumerate}
\end{lemma}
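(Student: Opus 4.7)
The plan is to reduce everything to part (i), since once the explicit formula $\nu^*(\bar{x}) = xz$ is in hand, both (ii) and (iii) follow from short manipulations using centrality of $z$.

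For (i), I would start from the explicit description of the symmetrizing form on $\bar A$ given just before the statement of the lemma. Concretely, $\bar\beta(\bar x,\bar y) = \bar\lambda(\bar x\bar y) = \lambda(xyz)$. The candidate $\nu^*(\bar x) := xz$ then satisfies
\[
\beta(xz,y) = \lambda(xzy) = \lambda(xyz) = \bar\beta(\bar x,\nu(y))
\]
for every $y \in A$, where I use that $z \in Z(A)$ to move $z$ past $y$. By non-degeneracy of $\beta$, this uniquely identifies $\nu^*(\bar x)$, and simultaneously shows the formula is well-defined in $\bar x$: if $\bar x = \bar x'$, i.e.\ $x - x' \in I = (Az)^\perp$, then $\beta(a,(x-x')z) = \lambda(a(x-x')z) = \beta(x-x',az) = 0$ for all $a \in A$, hence $(x-x')z = 0$ by non-degeneracy of $\beta$.

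Part (ii) is then a direct computation: using (i),
\[
\nu^*(\bar x)\cdot y = (xz)\cdot y = (xy)z = \nu^*(\overline{xy}) = \nu^*(\bar x\cdot \bar y),
\]
and the second identity is symmetric, again exploiting that $z$ is central. For (iii), if $\nu^*(\bar x) = xz = 0$, then $\beta(\nu^*(\bar x),y) = 0$ for every $y$, so $\bar\beta(\bar x,\nu(y)) = 0$ for every $y \in A$; since $\nu$ is surjective and $\bar\beta$ is non-degenerate, this forces $\bar x = 0$. The image of $\nu^*$ is visibly $Az$, which is a sub-bimodule of $A$ by centrality of $z$, and (ii) together with the definitions of the bimodule structures gives $\nu^*(a\bar x b) = a\,\nu^*(\bar x)\,b$, so $\nu^*$ is a bimodule isomorphism $\bar A \xrightarrow{\sim} Az$.

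The only subtle point is the well-definedness of $\nu^*(\bar x) = xz$ on $\bar A$; everything else is formal once the computation $\beta(xz,y) = \bar\beta(\bar x,\nu(y))$ is written down. This subtlety, however, is essentially a reformulation of the definition $I = (Az)^\perp$ from Lemma \ref{lemma:quotientalgebrasymmetric}, so I do not expect any real obstacle.
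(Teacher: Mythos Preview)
Your proposal is correct and follows essentially the same route as the paper. The paper phrases (i) as showing that $\lambda\bigl((\nu^*(\bar x)-xz)\cdot y\bigr)=0$ for all $y$ and then invoking that $\Ker(\lambda)$ contains no nonzero right ideal, which is exactly your non-degeneracy argument in different clothing; for (ii) and (iii) the paper likewise reduces immediately to (i) and to the surjectivity of $\nu$.
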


\begin{proof}
$\null$
\begin{enumerate}[(i)]
\item The defining property \eqref{eq:nustardef} of $\nu^*$ is equivalent to
$\lambda\bigl(\nu^*(\bar{x}) \cdot y\bigr)  = \bar{{\lambda}}(\bar{x} \cdot \bar{y}) = \lambda(xyz) = \lambda(xzy)$ for all $x,y \in A$. Hence the right ideal $(\nu^*(\bar{x})-xz)A $ is contained in the kernel of $\lambda$. Since $\Ker(\lambda)$ contains no nontrivial one-sided ideals, this implies $\nu^*(\bar{x}) = xz$. 
\item This directly follows from (i).
\item This follows from the fact that the map $\nu$ is surjective. \qedhere
\end{enumerate}	
\end{proof}

\begin{lemma}\label{lemma:nustarmapscentertocenterandsoctosoc}
We obtain the following relations:
	\begin{enumerate}[(i)]
		\item $\nu^*\bigl(Z(\bar{A})\bigr) = Z(A) \cap \Im(\nu^*) = Z(A) \cap Az$.
		\item $\nu^\ast(J(Z(A))) = Z(A) \cap \nu^{-1}(\soc({\bar A}))^\perp \subseteq J(Z(A)) \cap Az$.
		\item For $x \in Z(\bar{A})$, we have $\nu^\ast(x) \in \soc(Z(A))$ if and only if $x \cdot \nu(J(Z(A))) = 0$ holds. In particular, we obtain $\nu^*\bigl(\soc(Z(\bar{A}))\bigr) \subseteq \soc(Z(A))$.
	\end{enumerate}	
\end{lemma}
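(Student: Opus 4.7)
The plan is to prove the three statements in order, using Lemma \ref{lemma:propnustar} as the central tool throughout and exploiting the adjoint defining relation \eqref{eq:nustardef}.

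For part (i), the inclusion $\nu^*(Z(\bar{A})) \subseteq Z(A) \cap Az$ follows directly: for $\bar{x} \in Z(\bar{A})$ and any $y \in A$, Lemma \ref{lemma:propnustar}(ii) gives $\nu^*(\bar{x}) \cdot y = \nu^*(\bar{x}\bar{y}) = \nu^*(\bar{y}\bar{x}) = y \cdot \nu^*(\bar{x})$, and $\nu^*(\bar{x}) = xz \in Az$ by Lemma \ref{lemma:propnustar}(i). Conversely, if $w \in Z(A) \cap Az$, write $w = xz = \nu^*(\bar{x})$, and apply the same identities together with the injectivity in Lemma \ref{lemma:propnustar}(iii) to conclude $\bar{x}\bar{y} = \bar{y}\bar{x}$ for every $y \in A$.

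For part (ii), I will first rewrite $J(Z(\bar{A})) = Z(\bar{A}) \cap J(\bar{A}) = Z(\bar{A}) \cap \soc(\bar{A})^{\bar{\perp}}$, the second equality by Lemma \ref{lemma:propertiesperp}(v) applied to $\bar{A}$ (with $\bar{\perp}$ denoting orthogonality with respect to $\bar{\beta}$). Using \eqref{eq:nustardef}, $\bar{x} \in \soc(\bar{A})^{\bar{\perp}}$ translates into $\beta(\nu^*(\bar{x}), y) = 0$ for all $y$ with $\bar{y} \in \soc(\bar{A})$, i.e.\ into $\nu^*(\bar{x}) \in \nu^{-1}(\soc(\bar{A}))^\perp$. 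Combining with (i) gives
\[
\nu^*(J(Z(\bar{A}))) = (Z(A) \cap Az) \cap \nu^{-1}(\soc(\bar{A}))^\perp .
\]
Since $I \subseteq \nu^{-1}(\soc(\bar{A}))$, Lemma \ref{lemma:propertiesperp}(iii) yields $\nu^{-1}(\soc(\bar{A}))^\perp \subseteq I^\perp = Az$, so the factor $Az$ can be dropped from the intersection, giving the stated equality. To prove the final inclusion $\nu^*(J(Z(\bar{A}))) \subseteq J(Z(A)) \cap Az$, I will observe that $\nu(J(A)) = J(\bar{A})$, hence every element of $\soc(A)$ is annihilated by all lifts of elements of $J(\bar{A})$, so $\nu(\soc(A)) \subseteq \soc(\bar{A})$, i.e.\ $\soc(A) \subseteq \nu^{-1}(\soc(\bar{A}))$; taking $\perp$ and using Lemma \ref{lemma:propertiesperp}(v) shows $\nu^{-1}(\soc(\bar{A}))^\perp \subseteq \soc(A)^\perp = J(A)$.

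For part (iii), I will use $\soc(Z(A)) = \Ann_{Z(A)}(J(Z(A)))$. For $x \in Z(\bar{A})$, part (i) already gives $\nu^*(x) \in Z(A)$, so $\nu^*(x) \in \soc(Z(A))$ is equivalent to $\nu^*(x) \cdot u = 0$ for every $u \in J(Z(A))$. Applying Lemma \ref{lemma:propnustar}(ii) rewrites this product as $\nu^*(x \cdot \nu(u))$, and injectivity of $\nu^*$ then reduces the condition to $x \cdot \nu(u) = 0$ for all $u \in J(Z(A))$, which is precisely $x \cdot \nu(J(Z(A))) = 0$. The ``in particular'' follows because $\nu(J(A)) \subseteq J(\bar{A})$ and $\nu(Z(A)) \subseteq Z(\bar{A})$ force $\nu(J(Z(A))) \subseteq J(Z(\bar{A}))$; hence any $x \in \soc(Z(\bar{A})) = \Ann_{Z(\bar{A})}(J(Z(\bar{A})))$ satisfies $x \cdot \nu(J(Z(A))) = 0$.

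The main obstacle will be part (ii): one has to keep the two incarnations of $\perp$ (in $A$ versus in $\bar{A}$) straight and convert between them via \eqref{eq:nustardef}, and the small but nontrivial lemma that $\nu$ maps $\soc(A)$ into $\soc(\bar{A})$ — which in turn relies on $\nu(J(A)) = J(\bar{A})$ — is what simultaneously forces $\nu^{-1}(\soc(\bar{A}))^\perp$ into both $Az$ and $J(A)$.
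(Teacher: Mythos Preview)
Your proof is correct and, for parts (ii) and (iii), essentially identical to the paper's: you use the same key facts ($\soc(A) + I \subseteq \nu^{-1}(\soc(\bar{A}))$, the adjoint relation \eqref{eq:nustardef}, and injectivity of $\nu^\ast$) in the same way, only splitting the containment $\nu^{-1}(\soc(\bar{A}))^\perp \subseteq J(A) \cap Az$ into two separate steps where the paper does it in one.

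The only genuine difference is organizational. The paper opens with the general duality formula
\[
\nu^\ast(U)^\perp = \nu^{-1}(U^\perp) \quad\text{and hence}\quad \nu^\ast(U) = \nu^{-1}(U^\perp)^\perp
\]
for an arbitrary subspace $U \subseteq \bar{A}$, and then specializes to $U = Z(\bar{A})$ (using $Z(\bar{A})^\perp = K(\bar{A})$) and $U = J(Z(\bar{A}))$ (using $J(Z(\bar{A}))^\perp = K(\bar{A}) + \soc(\bar{A})$). This gives (i) and the equality in (ii) uniformly via orthogonality. You instead prove (i) by a direct centrality check using the bimodule property of $\nu^\ast$ from Lemma~\ref{lemma:propnustar}(ii)--(iii), bypassing orthogonality entirely, and only invoke the adjoint/orthogonality machinery for (ii). Your route for (i) is slightly more elementary; the paper's route is more systematic and makes the parallel between (i) and (ii) transparent.
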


\begin{proof}
Let $U$ be any subspace of $\bar{A}$, and let $x \in A$.
Then $x \in \nu^\ast(U)^\perp$ is equivalent to
$0 = \beta(\nu^\ast(U),x) = {\bar\beta}(U,\bar{x})$,
which implies $\bar{x} \in U^\perp$. This shows
$\nu^\ast(U)^\perp = \nu^{-1}(U^\perp)$ and
$\nu^\ast(U) = \nu^{-1}(U^\perp)^\perp$.
	\begin{enumerate}[(i)]
		\item For $U = Z(\bar{A})$, we have $U^\perp = K(\bar{A}) = K(A)+I/I$ and $\nu^{-1}(U^\perp) = K(A) + I$. Thus $\nu^\ast(U) =(K(A)+I)^\perp = Z(A) \cap I^\perp = Z(A) \cap Az$ follows.
		\item Let $U = J(Z(\bar{A})) = Z(\bar{A}) \cap J(\bar{A})$.
		Then we have $U^\perp = K(\bar{A}) + \soc(\bar{A})$ and
		$\nu^{-1}(U^\perp) = K(A) + \nu^{-1}(\soc(\bar{A}))$, so that
		$$\nu^\ast(U) = (K(A) + \nu^{-1}(\soc(\bar{A})))^\perp =
		Z(A) \cap \nu^{-1}(\soc({\bar A}))^\perp.$$
		Moreover, since $\nu(\soc(A) + I) = \nu(\soc(A)) \subseteq \soc(\bar{A})$ holds,
		we have $\soc(A) + I \subseteq \nu^{-1}(\soc(\bar{A}))$. This implies $\nu^{-1}(\soc(\bar{A}))^\perp \subseteq (\soc(A) + I)^\perp = J(A) \cap I^\perp$ and $$Z(A) \cap \nu^{-1}(\soc(\bar{A}))^\perp
		\subseteq Z(A) \cap J(A) \cap I^\perp = J(Z(A)) \cap Az.$$ 
		\item For $x \in Z(\bar{A})$, we have $\nu^\ast(x) \in Z(A)$. By using Lemma \ref{lemma:propnustar} and the injectivity of $\nu^\ast$, we obtain the equivalence
		$$\nu^\ast(x) \in \soc(Z(A)) \Leftrightarrow \nu^\ast(x) \cdot J(Z(A)) = 0 \Leftrightarrow \nu^\ast\bigl(x \cdot \nu(J(Z(A)))\bigr) = 0 \Leftrightarrow x \cdot \nu\bigl(J(Z(A))\bigr) = 0.$$
		Now let $x \in \soc(Z(\bar{A}))$. We then have $\nu\bigl(J(Z(A))\bigr) \subseteq J(\bar{A}) \cap Z(\bar{A}) = J(Z(\bar{A}))$, which implies $x \cdot \nu\bigl(J(Z(A))\bigr) = 0$. The above equivalence then yields $\nu^\ast(x) \in \soc(Z(A))$. \qedhere
	\end{enumerate}
\end{proof}

\begin{Remark}
Lemma \ref{lemma:nustarmapscentertocenterandsoctosoc}\,(iii) shows that $\soc(Z(A)) \cap \Im \nu^\ast$ is precisely the image under $\nu^\ast$ of the annihilator of $\nu(J(Z(A)))$ in $Z(\bar{A})$.
\end{Remark}

We now prove that the properties $J(Z(A)) \trianglelefteq A$ and $\soc(Z(A)) \trianglelefteq A$ are inherited by symmetric quotient algebras of $A$: 

\begin{Proposition}\label{prop:quotientalgebra}
	Let $A$ be a symmetric $F$-algebra and consider an ideal $I \trianglelefteq A$ for which $\bar{A} \coloneqq A/I$ is symmetric. 
	\begin{enumerate}[(i)]
		\item If $J(Z(A)) \trianglelefteq A$ holds, then $J(Z(\bar{A}))$ is an ideal of $\bar{A}$.
		\item If $\soc(Z(A)) \trianglelefteq A$ holds, then $\Ann_{Z(\bar{A})}(\nu(J(Z(A))))$
		and $\soc(Z(\bar{A}))$ are ideals of $\bar{A}$.
	\end{enumerate}
\end{Proposition}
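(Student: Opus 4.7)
My plan is to use Lemma~\ref{lemma:condsocleprod} as the criterion in both parts, reducing the problem to verifying $X \cdot K(\bar{A}) = 0$ for the relevant subspace $X$ of $Z(\bar{A})$, together with a separate closure argument in the annihilator case. The three ingredients I would lean on are: (a) $K(\bar{A}) = \nu(K(A))$ from Lemma~\ref{lemma:commutatorsmallestideal}\,(ii); (b) the bimodule identity $\nu^*(\bar{x}\bar{y}) = x\,\nu^*(\bar{y}) = \nu^*(\bar{x})\, y$ together with injectivity of $\nu^*$, from Lemma~\ref{lemma:propnustar}; and (c) the two statements in Lemma~\ref{lemma:nustarmapscentertocenterandsoctosoc}, namely $\nu^*(J(Z(\bar{A}))) \subseteq J(Z(A))$, and the equivalence (for $\bar{y} \in Z(\bar{A})$) of $\nu^*(\bar{y}) \in \soc(Z(A))$ with $\bar{y} \cdot \nu(J(Z(A))) = 0$.

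For (i), I would take $\bar{u} \in J(Z(\bar{A}))$ and $k \in K(A)$. By (c), $\nu^*(\bar{u}) \in J(Z(A))$, so the hypothesis $J(Z(A)) \cdot K(A) = 0$ yields $\nu^*(\bar{u}) \cdot k = 0$. Using (b), this equals $\nu^*(\bar{u} \cdot \nu(k))$, and injectivity of $\nu^*$ gives $\bar{u} \cdot \nu(k) = 0$. Varying $k$ and invoking (a) produces $J(Z(\bar{A})) \cdot K(\bar{A}) = 0$, so Lemma~\ref{lemma:condsocleprod}\,(ii) closes part (i).

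For (ii), the same three-line chain applied to $\bar{u} \in \soc(Z(\bar{A}))$, now invoking $\nu^*(\bar{u}) \in \soc(Z(A))$ from (c) and the hypothesis $\soc(Z(A)) \cdot K(A) = 0$, delivers $\soc(Z(\bar{A})) \cdot K(\bar{A}) = 0$, whence $\soc(Z(\bar{A})) \trianglelefteq \bar{A}$ by Lemma~\ref{lemma:condsocleprod}\,(iii). Setting $M \coloneqq \Ann_{Z(\bar{A})}(\nu(J(Z(A))))$, the characterisation in (c) lets me rerun the argument to obtain $M \cdot K(\bar{A}) = 0$, and Lemma~\ref{lemma:condsocleprod}\,(i) then forces $\bar{A} \cdot M \subseteq Z(\bar{A})$. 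To upgrade this to $\bar{A} \cdot M \subseteq M$, I would fix $\bar{a} \in \bar{A}$, $\bar{y} \in M$ and $u \in J(Z(A))$, and use centrality of $\nu(u)$ in $\bar{A}$ to reshuffle $\bar{a}\bar{y} \cdot \nu(u) = \bar{a}(\bar{y} \cdot \nu(u)) = 0$.

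The main obstacle will be this last closure step for $M$: unlike the two socles, $M$ is defined by an annihilation condition against elements pulled across $\nu$ from $A$, so establishing $M \cdot K(\bar{A}) = 0$ does not by itself deliver $\bar{A}$-stability. Centrality of each $\nu(u)$ in $\bar{A}$ is what rescues the argument, letting $\bar{a}$ pass through and act on $\bar{y}\nu(u) = 0$.
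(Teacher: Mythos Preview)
Your proposal is correct and follows essentially the same route as the paper: both parts reduce via Lemma~\ref{lemma:condsocleprod} to killing $K(\bar{A})$, push the computation through $\nu^*$ using Lemma~\ref{lemma:propnustar} and the inclusions of Lemma~\ref{lemma:nustarmapscentertocenterandsoctosoc}, and invoke injectivity of $\nu^*$. Your closure argument for $M$ (using centrality of $\nu(u)$ to pass $\bar{a}$ through) is exactly what the paper hides behind the words ``we conclude''; you have simply made it explicit.
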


\begin{proof}
$\null$
\begin{enumerate}[(i)]
	\item Suppose that $J(Z(A))$ is an ideal in $A$. By Lemma \ref{lemma:condsocleprod}, we have $J(Z(A)) \cdot K(A) = 0$. In particular, this yields $0 =
	\nu^\ast(J(Z(\bar{A}))) \cdot K(A) =
	\nu^\ast(J(Z(\bar{A})) \cdot K(\bar{A}))$ (see Lemmas \ref{lemma:nustarmapscentertocenterandsoctosoc} and \ref{lemma:propnustar}). Since $\nu^\ast$ is injective, this implies
	$J(Z(\bar{A})) \cdot K(\bar{A}) = 0$. Hence $J(Z(\bar{A}))
	\trianglelefteq \bar{A}$ follows again by Lemma \ref{lemma:condsocleprod}. 
	
	\item Suppose that $\soc(Z(A))$ is an ideal in $A$. Let $a \in A$ with $\bar{a} \in \Ann_{Z(\bar{A})}(\nu(J(Z(A))))$. We then have $\bar{a} \cdot \nu(J(Z(A))) = 0$, so $\nu^\ast({\bar a}) \in \soc(Z(A))$ follows by Lemma \ref{lemma:nustarmapscentertocenterandsoctosoc}\,(iii). Since $\soc(Z(A))$ is an ideal of $A$, this implies
	$$0 = \nu^\ast(\bar{a}) \cdot K(A) = \nu^\ast\bigl(\bar{a} \cdot K(\bar{A})\bigr).$$ Thus $\bar{a} \cdot K(\bar{A}) = 0$ follows. By Lemma \ref{lemma:condsocleprod}\,(i), this yields $\bar{A} \bar{a} \subseteq Z(\bar{A})$ and we conclude that $\bar{A} \bar{a} \subseteq \Ann_{Z(\bar{A})}(\nu(J(Z(A))))$ holds.	The statement concerning $\soc(Z(\bar{A}))$ can be proven similarly to~(i). \qedhere
\end{enumerate}
\end{proof}

The following example demonstrates that in general, the properties (P1) and (P2) are not transferred to quotient algebras, that is, the prerequisites of Proposition \ref{prop:quotientalgebra} are necessary.

\begin{Example}\label{ex:counterexamplesymmetricnecessary}
	Let $F$ be an algebraically closed field of characteristic $\operatorname{char}(F) = 5$ and let $q \in F^\times$ be an element of order $24$. 
	We consider the (non-symmetric) algebra 
	$$A \coloneqq F\langle X_1, X_2, X_3\rangle/( X_1^5, X_2^5, X_3^2, X_1 X_2 + X_2 X_1, X_3 X_1 - q X_1 X_3, X_3 X_2 - q X_2 X_3).$$
	Here, $F\langle X_1, X_2, X_3 \rangle$ denotes the free $F$-algebra in variables $X_1, X_2, X_3$. An $F$-basis of $A$ is given by $$\left\{ x_1^{\ell_1} x_2^{\ell_2}x_3^{\ell_3} \colon \ell_1, \ell_2 \in \{0, \ldots, 4\},\ \ell_3 \in \{0,1\}\right\},$$ where $x_i$ denotes the image of $X_i$ in $A$ for $i =1,2,3$. One can verify directly that $Z(A) = F\{1, x_1^4 x_2^4 x_3\}$ is two-dimensional and that $J(Z(A)) = \soc(Z(A)) = \soc(A)$ is an ideal of $A$.
	Now consider the algebra $$B \coloneqq F\langle X_1, X_2\rangle /(X_1^2, X_2^4, X_1 X_2 + X_2 X_1),$$ which can be viewed as a quotient algebra of $A$.
We write $y_1$ and $y_2$ for the images of $X_1$ and $X_2$ in $B$, respectively. An $F$-basis of $B$ is given by $$\left\{y_1^{\ell_1} y_2^{\ell_2} \colon \ell_1 \in \{0,1\},\ \ell_2 \in \{0, \ldots, 3\}\right\}.$$
	A short computation shows that $J(Z(B)) = \soc(Z(B)) = F\{y_2^2,y_1 y_2^3\}$ is two-dimensional. However, $$B \cdot J(Z(B)) = B \cdot \soc(Z(B)) = F\left\{y_2^2, y_2^3, y_1 y_2^2, y_1 y_2^3\right\}$$ is of dimension four, so $J(Z(B)) = \soc(Z(B))$ is not an ideal in $B$.
\end{Example}

\section{Trivial extension algebras}\label{sec:trivialextension}
Let $F$ be an algebraically closed field. In the following, we consider trivial extension algebras, which arise in various contexts in the representation theory of finite-dimensional algebras.
\bigskip

For an $F$-vector space $V$, we set $V^\ast \coloneqq \Hom_F(V,F)$ to be the space of $F$-linear forms on $V$. Let $A$ be an $F$-algebra. Recall that $A^\ast$ becomes an $A$-$A$-bimodule by setting $(af)(x) \coloneqq f(xa) \text{ and } (fa)(x) \coloneqq f(ax)$
for $x,a \in A$ and $f \in A^\ast.$ The trivial extension algebra $T \coloneqq T(A)$ of $A$ is the vector space $A \oplus A^\ast$, endowed with the multiplication law
$$(a,f) \cdot (b,g) \coloneqq (ab, ag + fb) \text{ for all }a,b \in A \text{ and }f,g \in A^\ast.$$
We denote the elements of $T$ by tuples $(a,f)$ with $a \in A$ and $f \in A^\ast$.
By \cite[Proposition 3.1]{BES07}, $T$ is a symmetric algebra with symmetrizing linear form $\lambda \colon T \to F,\ (a,f) \mapsto f(1)$. 
\bigskip

In the following, we identify subspaces $V \subseteq A$ and $W \subseteq A^\ast$ with the subspaces $V \oplus 0$ and $0 \oplus W$ of $T$, respectively. In this way, $A$ can be viewed as a subalgebra of $T$. Similarly, we identify $A^\ast$ with the ideal $0 \oplus A^\ast$ of $T$, which squares to zero. For a subspace $U$ of $A$, we view $(A/U)^\ast$ as a subset of $A^\ast$ by identifying the map $f \colon A/U \to F$ with $f^\wedge \colon A \to F$ defined by setting $f^{\wedge}(x) = f(x+U)$ for all $x \in A$.
\bigskip

We first determine the substructures of $T$ investigated in this paper, thereby extending some results of \cite{BES07} and \cite{CHL91}. 

\begin{lemma}\label{lemma:subspacest}
Let $A$ be an $F$-algebra and let $T \coloneqq T(A)$ be the trivial extension algebra of $A$. Then the following identities hold: 
\begin{enumerate}[(i)]
\item $Z(T) = Z(A) \oplus (A/K(A))^\ast$
\item $K(T) = K(A) \oplus \left[A,A^\ast\right]$, where $[A,A^\ast]$ is the $F$-subspace of $A^\ast$ spanned by the elements $af - fa$ with $a \in A$ and $f \in A^\ast$.
\item $J(T) = J(A) \oplus A^\ast$
\item $J(Z(T)) = J(Z(A)) \oplus (A/K(A))^\ast$
\item $\soc(T) = 0 \oplus (A/J(A))^\ast$
\item $\soc(Z(T)) = \{b \in \soc(Z(A)) \colon Ab \subseteq K(A)\} \oplus  (A/K(A)+ A \cdot J(Z(A))^\ast$
\item $R(T) = 0 \oplus (A/K(A) + J(A))^\ast$.
\end{enumerate}
\end{lemma}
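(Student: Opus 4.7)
The plan is to work through the seven identities in a logical order that exploits earlier parts to shorten later ones. Parts (i), (ii) and (iii) will be obtained by direct computation using the multiplication rule $(a,f)(b,g) = (ab, ag+fb)$ in $T$ together with the bimodule action $(af)(x) = f(xa)$, $(fa)(x) = f(ax)$; then (iv), (v), (vi) and (vii) will follow essentially formally from the identifications $J(Z(T)) = J(T) \cap Z(T)$, $\soc(T) = \rAnn_T(J(T))$, $\soc(Z(T)) = \Ann_{Z(T)}(J(Z(T)))$ and $R(T) = \soc(T) \cap Z(T)$.

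For (i), I would impose $(z,h)(b,g) = (b,g)(z,h)$ for all $(b,g) \in T$; the $A$-component gives $z \in Z(A)$, the $A^\ast$-component with $g = 0$ gives $hb = bh$ for all $b \in A$, which by the definition of the bimodule action means $h(xb - bx) = 0$ for all $x,b$, i.e.\ $h \in (A/K(A))^\ast$, and the case $b = 0$ is automatic when $z \in Z(A)$. For (ii), the commutator $[(a,f),(b,g)] = ([a,b], (ag-ga)+(fb-bf))$ already displays the decomposition (the term $[A^\ast,A^\ast]$ vanishes since $A^\ast \cdot A^\ast = 0$). For (iii), I would note that $A^\ast$ is a nilpotent ideal of $T$ with $T/A^\ast \cong A$, so $A^\ast \subseteq J(T)$ and $J(T)/A^\ast = J(A)$.

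Parts (iv) and (v) are then immediate: (iv) is the intersection of (i) and (iii), and for (v) I would compute $\rAnn_T(J(T))$ using (iii). An element $(s,h)$ with $(s,h)(j,g) = 0$ for all $j \in J(A)$, $g \in A^\ast$ must satisfy $sg = 0$ for all $g \in A^\ast$, which forces $s = 0$ (evaluate $(sg)(1) = g(s)$ on all $g$), and then the remaining condition $hj = 0$ for all $j \in J(A)$ rewrites as $h(J(A)) = 0$. Part (vii) is immediate from (v) and (i): the intersection of $(A/J(A))^\ast$ and $(A/K(A))^\ast$ inside $A^\ast$ is $(A/(K(A)+J(A)))^\ast$.

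The only part demanding real care is (vi). Starting from $(s,h) \in Z(T)$ with $(s,h)(z,g) = 0$ for all $(z,g) \in J(Z(T)) = J(Z(A)) \oplus (A/K(A))^\ast$ (by (iv)), the $A$-component $sz = 0$ for all $z \in J(Z(A))$ gives $s \in \soc(Z(A))$. The $A^\ast$-component splits into two conditions: setting $z = 0$ yields $sg = 0$ for every $g \in (A/K(A))^\ast$, which means $g(xs) = 0$ for all $x \in A$ and all such $g$, i.e.\ $As \subseteq K(A)$; setting $g = 0$ yields $hz = 0$ for all $z \in J(Z(A))$, which translates to $h$ vanishing on $A \cdot J(Z(A))$. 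Combining this with $h \in (A/K(A))^\ast$ gives $h \in (A/(K(A) + A \cdot J(Z(A))))^\ast$. The main bookkeeping obstacle throughout will be to keep track consistently of how a one-sided action on $A^\ast$ translates (via the formulas $(af)(x) = f(xa)$ and $(fa)(x) = f(ax)$) into a vanishing condition on a specific subspace of $A$, but once this dictionary is fixed each step reduces to a short verification.
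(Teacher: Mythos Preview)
Your proposal is correct and follows essentially the same route as the paper; parts (i)--(iv), (vi) and (vii) are argued in exactly the same way, and for (vi) the paper also checks the reverse inclusion explicitly, which you should add. The only minor deviation is in (v): the paper verifies $0 \oplus (A/J(A))^\ast \subseteq \soc(T)$ and then concludes equality by the dimension count $\dim (A/J(A))^\ast = \dim T - \dim J(T)$, whereas you compute the annihilator directly---and note that $\soc(T) = \rAnn_T(J(T))$ requires $(j,g)(s,h) = 0$ rather than $(s,h)(j,g) = 0$, though since $T$ is symmetric this makes no difference to the outcome.
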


\begin{proof}
The statements of (i) and (ii) are proven in \cite{BES07} as well as \cite{CHL91}. Since $J(A) \oplus A^\ast$ is a nilpotent ideal of~$T$, we have $J(A) \oplus A^\ast \subseteq J(T)$. On the other hand, we have $J(T) = (J(T) \cap A) \oplus A^\ast$ and $J(T) \cap A$ is a nilpotent ideal of $A$, which yields $J(T) \cap A \subseteq J(A)$. This shows the identity in (iii). Combined with (i), this yields the formula for $J(Z(T))$ given in (iv). An easy calculation shows that $0 \oplus (A/J(A))^\ast$ is contained in $\soc(T)$. Since $\dim 0 \oplus (A/J(A))^\ast = \dim T - \dim J(T)$ follows by (iii), we obtain the equality given in (v).
%
\bigskip

Now we show (vi). To this end, set $I \coloneqq K(A) + A \cdot J(Z(A))$. 
Consider $t \in \soc(Z(T))$, $a \in J(Z(A))$ and $f \in (A/K(A))^\ast$. By (i), we have $t = (b,g)$ with $b \in Z(A)$ and $g \in (A/K(A))^\ast$. Moreover, (iv) implies $(a,f) \in J(Z(T))$. Thus $0 = (b,g)(a,f) = (ba,bf+ga)$, i.e., we have $ba = 0$ and $0 = (bf+ga)(x) = f(xb) + g(ax)$ for $x \in A$. Since $a$ is arbitrary, this forces $b \in \soc(Z(A))$ and $f(xb) = 0$ for $x \in A$, i.e.\ $f(Ab) = 0$. Since $f$ is arbitrary, we obtain $Ab \subseteq K(A)$ and $g(ax) = 0$ for $x \in A$, so that $g(aA) = 0$. Since $a$ is arbitrary, this also implies $g \in (A/I)^\ast$. 
\bigskip

Now let $b \in \soc(Z(A))$ with $Ab \subseteq K(A)$ and $g \in (A/I)^\ast$. Note that $(b,g)$ is contained in $Z(T)$. Consider an arbitrary element $(a,f) \in J(Z(T)).$ Because of $a \in J(Z(A)),$ we have $ab = 0.$ Moreover, for any $x \in A$, we obtain $(ag+fb)(x) = g(xa) + f(bx) = 0$ since we have $bx = xb \in Ab \subseteq K(A) \subseteq \Ker(f)$ and $xa \in A \cdot J(Z(A)) \subseteq \Ker(g).$ This shows $(a,f) \cdot (b,g) = 0$ and hence $(b,g) \in \soc(Z(T)).$
\bigskip

Finally, using (i) and (v), we obtain $$R(T) = \soc(T) \cap Z(T) = 0 \oplus (A/K(A) + J(A))^\ast,$$ which proves~(vii). We remark that this statement, for fields of positive characteristic, is already proven in~\cite{BES07}. 
\end{proof}

\begin{Remark}
For $b \in A$, requiring $Ab \subseteq K(A)$ as in Lemma \ref{lemma:subspacest}\,(iv) forces $b \in J(A)$: For $A' \coloneqq A/J(A)$ and $b' \coloneqq b + J(A)$, we have $A' b' \subseteq K(A')$. Since $A'$ is semisimple and hence symmetric, we have $A' b' = 0$ since $K(A')$ does not contain any nontrivial left ideal. This implies $b \in J(A)$. 
\end{Remark}
 
\begin{theorem}\label{theo:soctaideal}
Let $A$ be an $F$-algebra with trivial extension algebra~$T \coloneqq T(A)$. 
\begin{enumerate}[(i)]
\item $J(Z(T))$ is an ideal in $T$ if and only if $J(Z(A))$ and $K(A)$ are ideals in~$A$. 
\item $\soc(Z(T))$ is an ideal in $T$ if and only if $I \coloneqq K(A) + A \cdot J(Z(A))$ and $S \coloneqq \{b \in \soc(Z(A)) \colon Ab \subseteq K(A)\}$ are ideals of $A$.  
\end{enumerate}
\end{theorem}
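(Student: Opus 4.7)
My plan is to apply Lemma \ref{lemma:condsocleprod}(ii) and (iii), so each part reduces to computing when $J(Z(T)) \cdot K(T) = 0$ or $\soc(Z(T)) \cdot K(T) = 0$. Using Lemma \ref{lemma:subspacest}, I write a typical element of $K(T)$ as $(c, g)$ with $c \in K(A)$ and $g \in [A, A^\ast]$, of $J(Z(T))$ as $(a, f)$ with $a \in J(Z(A))$ and $f \in (A/K(A))^\ast$, and of $\soc(Z(T))$ as $(b, f)$ with $b \in S$ and $f \in (A/I)^\ast$. The product $(a, f)(c, g) = (ac, ag + fc)$ (and similarly with $b$ in place of $a$) then decouples into separate conditions on the two summands, since the parameters in each space can be varied independently.

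Two preliminary identifications make the computation routine. First, from $[b, g](x) = g(xb) - g(bx) = g([x, b])$ one sees that $[A, A^\ast]$ annihilates $Z(A)$ under the natural pairing $A \times A^\ast \to F$; a dimension count via $Z(T) = K(T)^\perp$ shows this inclusion is an equality. Second, $(A/K(A))^\ast$ and $(A/I)^\ast$ are by construction the annihilators of $K(A)$ and $I$ in $A^\ast$. Hence conditions like $(fc)(x) = f(cx) = 0$ for all $x$ and $f$ translate into $cA \subseteq K(A)$ (or $\subseteq I$), while $(ag)(x) = g(xa) = 0$ for all $x$ and $g$ becomes $Aa \subseteq Z(A)$, which by Lemma \ref{lemma:condsocleprod}(i) is $K(A) \cdot a = 0$.

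For part (i), the three conditions $ac = 0$, $ag = 0$, $fc = 0$ reduce to $J(Z(A)) \cdot K(A) = 0$ (from either of the first two) and $K(A) \cdot A \subseteq K(A)$, which by Lemma \ref{lemma:condsocleprod}(ii) and Lemma \ref{lemma:commutatorsmallestideal}(i) characterize $J(Z(A)) \trianglelefteq A$ and $K(A) \trianglelefteq A$, respectively. For part (ii), the analogous computation yields the two conditions $S \cdot K(A) = 0$ and $K(A) \cdot A \subseteq I$. The main obstacle I anticipate is showing that $S \cdot K(A) = 0$ implies $S \trianglelefteq A$; the converse follows immediately by applying Lemma \ref{lemma:condsocleprod}(i) to any $b \in S \subseteq Z(A)$. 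For the hard direction, given $a \in A$ and $b \in S$, I must verify each of the three defining properties of $S$ for $ab$: membership in $Z(A)$ follows from $K(A) \cdot b = 0$ together with $[ab, x] = [a, x]b$ (using centrality of $b$); membership in $\soc(Z(A))$ from $J(Z(A)) \cdot ab = a \cdot J(Z(A)) \cdot b = 0$; and $A \cdot ab = (Aa)b \subseteq Ab \subseteq K(A)$. Finally, since $J(Z(A)) \subseteq Z(A)$ makes $A \cdot J(Z(A))$ automatically a two-sided ideal of $A$, the condition $K(A) \cdot A \subseteq I$ is equivalent to $A \cdot I \subseteq I$, i.e., to $I \trianglelefteq A$.
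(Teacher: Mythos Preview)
Your proof is correct and follows essentially the same strategy as the paper: reduce via Lemma~\ref{lemma:condsocleprod} to the vanishing of $J(Z(T))\cdot K(T)$ (resp.\ $\soc(Z(T))\cdot K(T)$), decompose using Lemma~\ref{lemma:subspacest}, and separate the resulting conditions on the two summands. Your verification that $S\cdot K(A)=0$ is equivalent to $S\trianglelefteq A$ matches the paper's argument almost verbatim.

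The one point where your presentation differs slightly is the treatment of the cross term $ag$ (resp.\ $bg$) with $g\in[A,A^\ast]$. The paper observes that once $J(Z(A))$ (resp.\ $S$) is an ideal, one has $J(Z(A))\cdot[A,A^\ast]=[J(Z(A))\cdot A,A^\ast]=[J(Z(A)),A^\ast]=0$ directly. You instead identify $[A,A^\ast]$ with $\mathrm{Ann}_{A^\ast}(Z(A))$ via the dimension count coming from $K(T)^\perp=Z(T)$ in the symmetric algebra $T$, which turns the condition $ag=0$ into $Aa\subseteq Z(A)$ and hence into $K(A)\cdot a=0$ by Lemma~\ref{lemma:condsocleprod}(i). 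This is a clean way to see that the $ag$-condition is \emph{equivalent} to the $ac$-condition rather than merely implied by it, but the overall argument is the same.
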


\begin{proof}
$\null$
\begin{enumerate}[(i)]
\item By Lemmas \ref{lemma:condsocleprod} and \ref{lemma:subspacest}, $J(Z(T))$ is an ideal of $T$ if and only if we have $$0 = J(Z(T)) \cdot K(T) = J(Z(A)) \cdot K(A) \oplus J(Z(A)) \cdot [A,A^\ast] + (A/K(A))^\ast \cdot K(A).$$ The condition $J(Z(A)) \cdot K(A) = 0$ is equivalent to $J(Z(A))$ being an ideal of $A$ (see Lemma \ref{lemma:condsocleprod}). If $J(Z(A))$ is an ideal of $A$, we have $$J(Z(A)) \cdot [A,A^\ast] = [J(Z(A)) \cdot A,A^\ast] =  [J(Z(A)),A^\ast] = 0.$$ Thus the second component of $J(Z(T)) \cdot K(T)$ is zero if and only if $(A/K(A))^\ast \cdot K(A) $ is. This is equivalent to $A \cdot K(A) = K(A) \cdot A$ being contained in $K(A)$ (see Lemma \ref{lemma:commutatorsmallestideal}), that is, to $K(A)$ being an ideal in $A$.

\item Again, $\soc(Z(T))$ is an ideal in $T$ if and only if we have $$0 = \soc(Z(T)) \cdot K(T) = S \cdot K(A) \oplus S \cdot [A,A^\ast] + (A/I)^\ast \cdot K(A).$$
If $S \cdot K(A) = 0$ holds, then we have $Ab \subseteq Z(A)$ for all $b \in S$ (see Lemma \ref{lemma:condsocleprod}\,(i)). Since $Ab$ annihilates $J(Z(A))$, we even obtain $Ab \subseteq \soc(Z(A))$. Moreover, we have $A(Ab) \subseteq Ab \subseteq K(A)$ and hence $Ab \subseteq S$. This shows that $S$ is an ideal of $A$. Conversely, if $S$ is an ideal of $A$, then $S \cdot K(A) = 0$ follows by \cite[Lemma 2.1]{KUL20}. If $S$ is an ideal of $A$, we have $$S \cdot [A, A^\ast] = [S, A^\ast] = 0.$$ Hence the second component of $\soc(Z(T)) \cdot K(T)$ is zero if and only if $(A/I)^\ast \cdot K(A) = 0$ holds. As before, this is equivalent to $A \cdot K(A) = K(A) \cdot A \subseteq I$, that is, to $I$ being an ideal of~$A$. \qedhere
\end{enumerate}
\end{proof}

\begin{Remark}
In the special case that the algebra $A$ itself is symmetric, $J(Z(T))$ is an ideal of $T$ if and only if $A$ is commutative (see Remark~\ref{rem:ka}). 
\end{Remark}

%
%
%

\section{Symmetric local algebras of small dimension}\label{sec:smalldimension}
Let $F$ be an algebraically closed field. Landrock showed in \cite{LAN20} that $J(Z(A))$ is an ideal in $A$ for every symmetric local $F$-algebra $A$ of dimension at most ten. In this section, we extend his result by proving that
$J(Z(A))$ is an ideal in every symmetric local $F$-algebra $A$ of dimension at most eleven. We will also show
that this bound is sharp. Moreover, we study the analogous problem for the socle of the center. In the following, we write $J^i$ for the powers $J^i(A) \coloneqq J(A)^i$ of the Jacobson radical of $A$.

\begin{lemma}[{\cite[Lemma E]{KUL84}}]\label{lemma:kultheoe}
	Let $I$ be an ideal of an $F$-algebra $A$ and let $n$ be a positive integer. Suppose that
	$$I^n = F\left\{x_{i1} \cdots  x_{in} \colon i = 1, \ldots, d\right\} + I^{n+1}$$
	holds for elements $x_{ij} \in I.$ 
	Then we have
	$$I^{n+1} = F\left\{x_{j1} x_{i1} \cdots x_{in} \colon i,j = 1, \ldots, d\right\} + I^{n+2}$$
	and 
	$$I^{n+1} = F\left\{x_{i1} \cdots x_{in} x_{jn} \colon i,j = 1, \ldots, d\right\} + I^{n+2}.$$
\end{lemma}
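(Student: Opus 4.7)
The plan is to prove both identities by applying the hypothesis twice: once to open up $I^{n+1}$ via $I^{n+1} = I^n \cdot I$ (respectively $I \cdot I^n$), and once more to rewrite the resulting $n$-fold inner factor as an $F$-combination of the given generators $x_{i1}\cdots x_{in}$ modulo $I^{n+1}$. Multiplying the hypothesis on the right by $I$ already yields
$$I^{n+1} \,=\, F\{x_{i1}\cdots x_{in} \colon i = 1,\ldots,d\}\cdot I \,+\, I^{n+2},$$
so the task reduces to controlling each single element $x_{i1}\cdots x_{in}\cdot y$ modulo $I^{n+2}$, for arbitrary $i$ and arbitrary $y \in I$.

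For the first identity (prepending $x_{j1}$), I would fix $i$ and $y$ and regroup
$$x_{i1}\cdots x_{in}\cdot y \,=\, x_{i1}\cdot(x_{i2}\cdots x_{in}\, y).$$
The inner factor $x_{i2}\cdots x_{in}\,y$ lies in $I^n$, so the hypothesis supplies scalars $\mu_j \in F$ and an element $r \in I^{n+1}$ with $x_{i2}\cdots x_{in}\,y = \sum_{j=1}^d \mu_j\, x_{j1}\cdots x_{jn} + r$. Left-multiplying by $x_{i1} \in I$ absorbs $x_{i1}r \in I\cdot I^{n+1} = I^{n+2}$ and gives
$$x_{i1}\cdots x_{in}\, y \,\equiv\, \sum_{j=1}^d \mu_j\, x_{i1}\, x_{j1}\cdots x_{jn} \pmod{I^{n+2}}.$$
The index swap $i \leftrightarrow j$ rewrites the resulting spanning set as $\{x_{j1} x_{i1}\cdots x_{in} \colon i,j = 1,\ldots,d\}$; the reverse inclusion is immediate since each such product already lies in $I^{n+1}$.

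For the second identity (appending $x_{jn}$), I would run the mirror-image argument starting from $I^{n+1} = I\cdot I^n$, splitting $y\cdot x_{i1}\cdots x_{in} = (y\, x_{i1}\cdots x_{i,n-1})\cdot x_{in}$ and applying the hypothesis to the left factor, which lies in $I^n$. This yields $y\cdot x_{i1}\cdots x_{in} \equiv \sum_j \mu_j\, x_{j1}\cdots x_{jn}\, x_{in} \pmod{I^{n+2}}$, and a relabeling of the two free indices casts the spanning set in the form $\{x_{i1}\cdots x_{in}\, x_{jn}\}$ demanded by the statement.

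I do not expect a serious obstacle: the argument is essentially a short double application of the hypothesis combined with the routine bookkeeping $I^k \cdot I^\ell = I^{k+\ell}$. The only subtlety worth writing out carefully is the final relabeling of summation indices, which has to be done so that the pattern ``$n$ consecutive letters $x_{i1},\ldots,x_{in}$ from a single row $i$, flanked by one further letter $x_{j1}$ or $x_{jn}$ from a possibly different row $j$'' matches precisely the form appearing in the two claimed identities.
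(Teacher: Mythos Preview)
Your argument is correct. The paper itself does not supply a proof of this lemma but only cites \cite[Lemma E]{KUL84}; the argument you give---multiplying the hypothesis by $I$ on one side, regrouping to isolate an $n$-fold product inside, and reapplying the hypothesis to that inner factor---is the standard one, and your handling of the index relabeling is accurate.
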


\begin{lemma}[{\cite[Lemma 2.7]{CHL91}}]\label{lemma:chlab}
Let $A$ be a local algebra with $\dim A/K(A) = 4$ and $\dim J^2/J^3 \geq 2$. Then either $\dim A \leq 8$ holds or there exist elements $a,b \in J$ such that $a^2 + J^3$ and $ab + J^3$ or $a^2 + J^3$ and $ba + J^3$ are linearly independent in $J^2/J^3$. 
\end{lemma}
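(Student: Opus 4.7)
The plan is to argue by contrapositive. We assume that for every pair $(a, b) \in J \times J$, both $\{a^2 + J^3, ab + J^3\}$ and $\{a^2 + J^3, ba + J^3\}$ are linearly dependent sets in $J^2/J^3$, and we derive $\dim A \leq 8$. Since $A = F \cdot 1 \oplus J$, every commutator of elements of $A$ reduces to a commutator of elements of $J$, so $K(A) \subseteq J^2$ and hence $d := \dim J/J^2 \leq \dim J/K(A) = 3$. If $d = 1$, then $J = F a + J^2$ for some $a$, and a short Nakayama-type computation gives $J^2 \subseteq F a^2 + J^3$, contradicting $\dim J^2/J^3 \geq 2$. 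To rule out $d = 2$ with basis $a_1, a_2$ of $J/J^2$, I would case-split on whether $a_i^2 \in J^3$ and on the linear (in)dependence of $a_1^2, a_2^2$ modulo $J^3$. Every subcase either yields $\dim J^2/J^3 \leq 1$ directly, or reduces to the subcase in which $a_1^2, a_2^2$ are independent modulo $J^3$ and $a_1 a_2, a_2 a_1 \in J^3$; there, testing the assumption on $(a, b) = (a_1 + a_2, a_1)$ produces $a^2 \equiv a_1^2 + a_2^2$ and $ab \equiv a_1^2$ modulo $J^3$, which are linearly independent --- contradiction. Hence $d = 3$, and combined with $K(A) \subseteq J^2$ and $\dim J/K(A) = 3$ this forces $K(A) = J^2$.

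Next, pick $a_1, a_2, a_3 \in J$ lifting a basis of $J/J^2$. The same $(a_i + a_j, a_i)$ trick shows that no two of $a_1^2, a_2^2, a_3^2$ can be linearly independent modulo $J^3$, so their images together span an at most one-dimensional subspace of $J^2/J^3$. I would then apply the hypothesis to the nine pairwise products $a_i a_j$ and to the mixed combinations $(a_i + a_j, a_k)$ and $(a_i + a_j + a_k, a_\ell)$ to pin down the structure of $J^2/J^3$; in every scenario consistent with the negation, one obtains $\dim J^2/J^3 = 2$ (matching the hypothesis lower bound), together with strong constraints forcing the remaining off-diagonal products $a_i a_j$ either to vanish modulo $J^3$ or to lie in a prescribed two-dimensional span.

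Finally, propagate the constraints upward. Lemma \ref{lemma:kultheoe}, together with the cross-product vanishing established above, shows that each piece $J^n / J^{n+1}$ with $n \geq 2$ is spanned by at most two monomials --- essentially the pure powers $a_i^n$ of two of the generators --- and a further application of the hypothesis to elements of $J^2$ forces $\dim J^3 \leq 2$. Summing,
\[ \dim J = \dim J/J^2 + \dim J^2/J^3 + \dim J^3 \leq 3 + 2 + 2 = 7, \]
so $\dim A \leq 8$, as required. The main obstacle is the detailed case analysis in the $d = 3$ step, especially the subcase in which all $a_i^2 \in J^3$, so that $J^2/J^3$ is spanned by the commutators $[a_i, a_j]$; here the bookkeeping of higher graded pieces and the verification that $\dim J^3 \leq 2$ require the most care.
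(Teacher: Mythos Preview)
The paper does not give its own proof of this lemma; it is quoted verbatim from \cite[Lemma~2.7]{CHL91}, so there is nothing in the present paper to compare your attempt against.

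On its own merits, your plan has the right opening moves --- the reductions to $d=3$ and $K(A)=J^2$, and the observation that no two basis squares $a_i^2$ can be independent modulo $J^3$, are correct and essentially match the argument in \cite{CHL91}. The difficulty is the endgame. Your claim that ``a further application of the hypothesis to elements of $J^2$ forces $\dim J^3\le 2$'' does not work as written: for $a\in J^2$ one has $a^2\in J^4\subseteq J^3$, so the linear-dependence hypothesis on $\{a^2,ab\}$ modulo $J^3$ is vacuous and yields no new constraint. Bounding $\dim J^3$ (and the higher layers) therefore needs a different mechanism --- in \cite{CHL91} this is done by first normalising the multiplication table on $J/J^2$ under the standing assumption, and then using Lemma~\ref{lemma:kultheoe} together with those relations to control each $J^n/J^{n+1}$. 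Your sketch gestures at this (``strong constraints forcing the remaining off-diagonal products~\dots'') but does not actually carry it out, and the hardest subcase you flag --- all $a_i^2\in J^3$, so $J^2/J^3$ is spanned by commutators --- is exactly where the work lies. As it stands the proposal is a reasonable outline of the first half of the proof, but the bound $\dim A\le 8$ is not yet established.
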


\begin{lemma}[{\cite[Lemma 0.3]{CHL91} and \cite[Lemma G]{KUL84}}]\label{lemma:chlz}\label{lemma:onedimensionallayersymmetric}
Suppose that $A$ is a local algebra. If $\dim J^i /J^{i+1} = 1$
holds for some positive integer $i$, then we have $J^i \subseteq Z(A)$. If $A$ is additionally symmetric, then we even have $J^{i-1} \subseteq Z(A)$. 
\end{lemma}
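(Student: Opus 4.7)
The statement splits into two claims which I would treat in turn: the general assertion $J^i \subseteq Z(A)$ for local $A$, and the strengthening to $J^{i-1} \subseteq Z(A)$ when $A$ is in addition symmetric. My plan is to handle the first by a bootstrapping argument using Lemma~\ref{lemma:kultheoe} and then descend from the top of the Loewy series, and to obtain the second by combining the first with the self-duality provided by the symmetrizing bilinear form.

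First I would note that because $A$ is local and $F$ is algebraically closed, $A = F \oplus J$ with $F \subseteq Z(A)$. Writing any $a \in A$ as $a = \alpha + j$ with $\alpha \in F$ and $j \in J$ gives, for $y \in J^k$ with $k \geq 1$, that $[a,y] = [j,y] \in J \cdot J^k + J^k \cdot J = J^{k+1}$. So $[A, J^k] \subseteq J^{k+1}$ holds unconditionally in any such algebra. Next, pick $x \in J^i \setminus J^{i+1}$; since $J^i/J^{i+1}$ is one-dimensional and spanned by residues of products of $i$ elements of $J$, we can find $y_1, \ldots, y_i \in J$ with $x \equiv y_1 \cdots y_i \pmod{J^{i+1}}$. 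Applying Lemma~\ref{lemma:kultheoe} with $d=1$ yields $J^{i+1} = F \cdot (y_1 x) + J^{i+2}$, and iterating gives $\dim J^{i+k}/J^{i+k+1} \leq 1$ for all $k \geq 0$. Thus $J^i$ has a basis $x_0 = x, x_1, \ldots, x_{m-i}$ with $x_j \in J^{i+j} \setminus J^{i+j+1}$, where $m$ is the largest integer with $J^m \neq 0$.

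I would then prove $J^i \subseteq Z(A)$ by downward induction on $k$. The base case $J^m \subseteq Z(A)$ is immediate since $J \cdot J^m = J^{m+1} = 0 = J^m \cdot J$. For the inductive step, assume $J^{i+k+1} \subseteq Z(A)$; it suffices to show $[a, x_k] = 0$ for every $a \in A$, and by the general bound $[a, x_k] \in J^{i+k+1}$. The key idea is to exploit both versions of Lemma~\ref{lemma:kultheoe}: the generator $x_{k+1}$ of $J^{i+k+1}/J^{i+k+2}$ can be realized both as a left multiple $y_1 \cdot x_k$ and as a right multiple $x_k \cdot y_{i+k}$ (up to nonzero scalars). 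Matching the expansions of $a x_k$ and $x_k a$ modulo $J^{i+k+2}$ using the fact that both actions on the one-dimensional quotient $J^{i+k}/J^{i+k+1}$ reduce to multiplication by $\pi(a) = a + J \in F$, I would force the leading coefficient of $[a, x_k]$ in $J^{i+k+1}/J^{i+k+2}$ to vanish; iterating (or directly using that $[a,x_k]$ is central while each Loewy layer of $J^i$ is one-dimensional) then pushes $[a,x_k]$ to $0$.

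For the symmetric case, Lemma~\ref{lemma:propertiesperp}(vi) gives $K(A) = Z(A)^\perp$, so $J^{i-1} \subseteq Z(A)$ is equivalent to $K(A) \subseteq (J^{i-1})^\perp$, and by Lemma~\ref{lemma:propertiesperp}(v) this is the same as $K(A) \cdot J^{i-1} = 0$. Using the first claim, $J^i \subseteq Z(A)$, so $[A, J^{i-1}] \subseteq J^i \cap K(A) \subseteq Z(A) \cap Z(A)^\perp$. I would then use the associativity of $\beta$ together with the dual chain $\soc_k(A) = (J^k)^\perp$ of one-dimensional socle layers (dual to the Loewy layers above $J^i$) to conclude that the commutators $[a, y]$ for $y \in J^{i-1}$ and $a \in A$ are orthogonal to all of $A$, hence vanish. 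The main obstacle will be the descent step of the first claim: converting the easy bound $[A, J^{i+k}] \subseteq J^{i+k+1}$ into the vanishing $[a, x_k] = 0$ demands a careful two-sided bookkeeping via Lemma~\ref{lemma:kultheoe}; the symmetric part then rides on top of this by exploiting the bilinear form to upgrade centrality by one Loewy layer.
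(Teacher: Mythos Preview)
The paper does not prove this lemma; it merely cites \cite[Lemma~0.3]{CHL91} and \cite[Lemma~G]{KUL84}. So there is no in-paper argument to compare against, and your sketch has to stand on its own.

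Your overall architecture for the first claim is sound: use Lemma~\ref{lemma:kultheoe} to force $\dim J^{i+k}/J^{i+k+1}\le 1$ for all $k\ge 0$, then run a downward induction. The gap is in the inductive step. You assert that ``both actions on the one-dimensional quotient $J^{i+k}/J^{i+k+1}$ reduce to multiplication by $\pi(a)=a+J\in F$'' and that this forces the leading coefficient of $[a,x_k]$ in $J^{i+k+1}/J^{i+k+2}$ to vanish. But the first fact only says $a x_k \equiv \pi(a)x_k \equiv x_k a \pmod{J^{i+k+1}}$, i.e.\ $[a,x_k]\in J^{i+k+1}$, which you already knew from $[A,J^{i+k}]\subseteq J^{i+k+1}$. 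It says nothing about the image of $[a,x_k]$ in $J^{i+k+1}/J^{i+k+2}$: for $a\in J$ one has $\pi(a)=0$, and the next-order terms of $ax_k$ and $x_ka$ are governed by two a priori unrelated linear functionals $J/J^2\to J^{i+k+1}/J^{i+k+2}$ (left and right multiplication on the chosen generator). Showing these coincide is the actual content of the lemma and requires a genuine argument, typically via associativity relations of the specific product decomposition $x_k=u_1\cdots u_{i+k}$, or by observing that $A/\lAnn_A(J^i)$ has one-dimensional radical quotient and is hence commutative. Your parenthetical alternative (``$[a,x_k]$ is central while each Loewy layer is one-dimensional'') does not close the gap either: centrality of $[a,x_k]$ is just the inductive hypothesis and does not by itself force vanishing.

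Your outline for the symmetric strengthening has a similar issue. You reduce correctly to $K(A)\cdot J^{i-1}=0$ and observe $[A,J^{i-1}]\subseteq J^i\cap K(A)\subseteq Z(A)\cap Z(A)^\perp$, but $Z(A)\cap K(A)$ need not be zero in a symmetric algebra, so this inclusion alone is not enough. The subsequent appeal to ``the dual chain of one-dimensional socle layers'' is too vague to carry the conclusion; you would need to make explicit how the pairing $\beta$ kills $[a,y]$ for $y\in J^{i-1}$, and as written the argument is circular (it ultimately needs $J^{i-1}\subseteq K(A)^\perp$, which is the goal).
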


The next statement can be found in the proof of \cite[Theorem 3.2]{LAN20}.

\begin{lemma}\label{lemma:centerdim3greater}
	Let $A$ be a non-commutative symmetric algebra. Then $\dim A \geq \dim Z(A) + 3$ holds.
\end{lemma}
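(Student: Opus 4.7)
The plan is to translate the claim into an inequality on $\dim K(A)$ using the symmetry hypothesis, and then to bound $\dim K(A)$ from above by an elementary span argument based on the codimension of $Z(A)$ in $A$.

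First, I would invoke Lemma \ref{lemma:propertiesperp}: part (vi) gives $K(A)^\perp = Z(A)$, and part (i) gives $\dim K(A) + \dim K(A)^\perp = \dim A$. Combining these yields the identity $\dim K(A) = \dim A - \dim Z(A)$, so the asserted inequality $\dim A \geq \dim Z(A) + 3$ is equivalent to $\dim K(A) \geq 3$. This is the only place where the symmetry of $A$ enters, and it is really the heart of the argument; without it one can easily have $\dim K(A) < \dim A - \dim Z(A)$ (e.g.\ in upper triangular $2 \times 2$ matrices).

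Next, set $n \coloneqq \dim A - \dim Z(A) = \dim K(A)$ and suppose for contradiction that $n \leq 2$. Since $A$ is non-commutative, $K(A) \neq 0$, so $n \geq 1$. Choose $a_1, \ldots, a_n \in A$ whose cosets form a basis of $A/Z(A)$, so that $A = Z(A) + F\{a_1, \ldots, a_n\}$. For arbitrary elements $b = z_b + \sum_i \beta_i a_i$ and $c = z_c + \sum_j \gamma_j a_j$ of $A$ with $z_b, z_c \in Z(A)$, bilinearity of the commutator together with the centrality of $z_b$ and $z_c$ collapses the expansion to $[b,c] = \sum_{i<j}(\beta_i \gamma_j - \beta_j \gamma_i)[a_i, a_j]$. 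Hence $K(A)$ is spanned by the $\binom{n}{2}$ elements $[a_i, a_j]$ with $1 \leq i < j \leq n$.

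Finally, I would derive the contradictions: for $n = 1$, $\binom{1}{2} = 0$ forces $K(A) = 0$, contradicting non-commutativity; for $n = 2$, $\binom{2}{2} = 1 < 2 = \dim K(A)$, contradicting the equality from the first step. Therefore $n \geq 3$, as required. The argument is entirely routine once the translation $\dim K(A) = \dim A - \dim Z(A)$ is in hand, so there is no serious obstacle to expect.
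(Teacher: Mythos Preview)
Your proof is correct and takes essentially the same approach as the paper: both use the identity $\dim K(A) = \dim A - \dim Z(A)$ coming from $K(A)^\perp = Z(A)$ (Lemma~\ref{lemma:propertiesperp}), and both rule out codimensions $1$ and $2$ by the elementary span argument that $K(A)$ is generated by commutators of a basis of $A/Z(A)$. Your $\binom{n}{2}$ packaging is a slightly more uniform phrasing of the same two cases the paper treats explicitly.
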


\begin{proof}
	Since $A$ is not commutative, we have $\dim Z(A) < \dim A$. If $\dim A = \dim Z(A) +1$ holds, then we have $A = Fx \oplus Z(A)$ for some $x \in A$ and hence $A$ is commutative, a contradiction. If $\dim A = \dim Z(A) + 2$ holds, we may write $A = Fx \oplus Fy \oplus Z(A)$ with $x, y\in A$. This yields $K(A) \subseteq F[x,y]$ and hence $1 \geq \dim K(A) = \dim A - \dim Z(A) = 2$, which is a contradiction.
\end{proof}

We now collect some properties of symmetric local algebras. 

\begin{lemma}[{\cite[Lemma 3.1]{LAN17}}]\label{lemma:propertiessymmetriclocal}
Let $A$ be a symmetric local algebra. Then:
\begin{enumerate}[(i)]
	\item $\dim \soc(A) = 1$ and $\soc(A) \subseteq \soc(Z(A)).$
	\item $K(A) \cap \soc(A) = 0$ and $Z(A)$ is local.
	\item We have $J^{\ell-1} = \soc(A),$ where $\ell$ denotes the minimal positive integer with $J^\ell = 0$.
\end{enumerate}
\end{lemma}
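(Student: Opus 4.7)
The plan is to handle the three claims in the order (iii), (i), (ii), because the dimension computation in (i) is what turns the easy inclusion in (iii) into an equality, and the locality of $Z(A)$ in (ii) plus the one-dimensional-ideal argument both feed on the conclusion of (i).

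I would start with (iii), which is essentially formal. Since $A$ is local and $F$ is algebraically closed, $A/J(A)\cong F$ and the unique simple left module is one-dimensional, so $\soc(A)=\rAnn_A(J(A))$. From $J^{\ell-1}\cdot J=J^\ell=0$ I immediately get $J^{\ell-1}\subseteq\soc(A)$, and once $\dim\soc(A)=1$ is established in (i), the assumption $J^{\ell-1}\neq 0$ promotes this inclusion to an equality. For the first half of (i), the dimension statement is a one-line consequence of Lemma \ref{lemma:propertiesperp}: since $A$ is symmetric, $\soc(A)=J(A)^\perp$ and part (i) of that lemma gives $\dim\soc(A)=\dim A-\dim J(A)=\dim A/J(A)=1$. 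For $\soc(A)\subseteq\soc(Z(A))$, I would observe that a local algebra is automatically basic (only one isomorphism class of simples, of multiplicity one in $A/J(A)\cong F$), so Lemma \ref{lemma:Reynoldsbasic} applies and gives $\soc(A)=R(A)\subseteq Z(A)$. Combining this with $\soc(A)\cdot J(A)=0$ yields $\soc(A)\cdot J(Z(A))\subseteq\soc(A)\cdot J(A)=0$, hence $\soc(A)\subseteq\Ann_{Z(A)}(J(Z(A)))=\soc(Z(A))$.

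For (ii), the equality $K(A)\cap\soc(A)=0$ falls out of symmetry once (i) is in hand: $\soc(A)$ is a one-dimensional two-sided ideal, while $K(A)$ sits inside the kernel of the symmetrizing linear form $\lambda$, and by the defining property of $\lambda$ its kernel contains no nonzero one-sided ideal. Hence $\soc(A)$ cannot be contained in $K(A)$, and one-dimensionality forces the intersection to be zero. To see that $Z(A)$ is local, I would use the composition $Z(A)\hookrightarrow A\twoheadrightarrow A/J(A)\cong F$, whose kernel equals $Z(A)\cap J(A)=J(Z(A))$. Thus $Z(A)/J(Z(A))$ embeds in $F$ and, containing $1$, equals $F$; since $F$ is a field, $J(Z(A))$ is the unique maximal ideal of $Z(A)$.

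There is no genuine obstacle here; every step reduces to a structural fact already recorded in the paper (Lemma \ref{lemma:propertiesperp}, Lemma \ref{lemma:Reynoldsbasic}, and the standard identity $\soc(A)=\rAnn_A(J(A))$). The only point that requires mild care is the logical order: $\dim\soc(A)=1$ must be proved before one can invoke it to convert the inclusion $J^{\ell-1}\subseteq\soc(A)$ into an equality and to run the one-dimensional-ideal argument in (ii), and the fact that a local algebra is basic is what enables Lemma \ref{lemma:Reynoldsbasic} to place $\soc(A)$ inside $Z(A)$ in the first place.
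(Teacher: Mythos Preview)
Your argument is correct. The paper does not supply its own proof of this lemma; it is quoted from \cite[Lemma~3.1]{LAN17} without argument, so there is nothing to compare your approach against beyond checking soundness. Each step you give reduces cleanly to facts already available in the paper: $\dim\soc(A)=\dim A/J(A)=1$ via Lemma~\ref{lemma:propertiesperp}\,(i) and (v); $\soc(A)\subseteq Z(A)$ via Lemma~\ref{lemma:Reynoldsbasic} (local implies basic since $A/J(A)\cong F$); then $\soc(A)\cdot J(Z(A))\subseteq\soc(A)\cdot J(A)=0$ places $\soc(A)$ in $\soc(Z(A))$; the inclusion $J^{\ell-1}\subseteq\rAnn_A(J)=\soc(A)$ combined with $J^{\ell-1}\neq 0$ and $\dim\soc(A)=1$ gives (iii); and the nonzero one-dimensional ideal $\soc(A)$ cannot lie in $\Ker(\lambda)\supseteq K(A)$, while $Z(A)/J(Z(A))\hookrightarrow A/J(A)\cong F$ forces $Z(A)$ to be local. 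The logical ordering you flag (proving $\dim\soc(A)=1$ first) is exactly right.
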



\begin{theorem}\label{theo:kultheob} \label{theo:propertiessymmetricalgebra}
Let $A$ be a symmetric local algebra. If $\dim Z(A) \leq 4$ holds, then $A$ is commutative. For $\dim Z(A) = 5$, one of the following cases occurs: 
	\begin{enumerate}[(i)]
		\item $\dim A = 5$ and $A$ is commutative. 
		\item $\dim A = 8$ and there are two possibilities for the Loewy structure of $A$: 
		\begin{enumerate}[(a)]
			\item $\dim J/J^2 = \dim J^2/J^3 = 3$ and $\dim J^3= 1$, or 
			\item $\dim J/J^2 = \dim J^2/J^3 = \dim J^3/J^4 = 2$ and $\dim J^4= 1$.
		\end{enumerate} 
	\end{enumerate}
\end{theorem}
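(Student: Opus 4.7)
My plan exploits rigid dimension bookkeeping inside $J^2$ forced by three structural facts for a symmetric local algebra $A$: $K(A) = [J,J] \subseteq J^2$ (because $1 \in Z(A)$), $\soc(A) = J^{\ell-1}$ with $K(A) \cap \soc(A) = 0$ (Lemma~\ref{lemma:propertiessymmetriclocal}), and $\dim K(A) = \dim A - \dim Z(A)$ coming from $K(A)^\perp = Z(A)$ (Lemma~\ref{lemma:propertiesperp}). Writing $d_i \coloneqq \dim J^i/J^{i+1}$, the direct-sum inclusion $K(A) \oplus \soc(A) \subseteq J^2$ together with $\dim J^2 = \dim A - 1 - d_1$ yields the key inequality
\[
d_1 \;\leq\; \dim Z(A) - 2.
\]
Moreover, Lemma~\ref{lemma:onedimensionallayersymmetric} implies that $d_i = 1$ for some $i \geq 1$ forces $J^{i-1} \subseteq Z(A)$; in particular, non-commutativity of $A$ forces $d_1, d_2 \geq 2$, and more generally $d_i \geq 2$ for $1 \leq i \leq r$, where $r$ is the least index with $J^r \subseteq Z(A)$.

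For the first assertion, suppose $A$ is non-commutative and $\dim Z(A) \leq 4$. The key inequality forces $d_1 = 2$, $\dim Z(A) = 4$, and the equality $K(A) \oplus \soc(A) = J^2$. Writing $J = Fx + Fy + J^2$, I have $K(A) \subseteq F[x,y] + [\{x,y\}, J^2] + [J^2, J^2] \subseteq F[x,y] + J^3$, so the image of $K(A)$ in $J^2/J^3$ has dimension at most one. Since $\soc(A) \subseteq J^3$ (non-commutativity forces $\ell \geq 4$, as otherwise $K(A) \subseteq J^2 = \soc(A)$ and thus $K(A) = 0$) and $K(A) \oplus \soc(A) = J^2$, this image also equals $d_2$, forcing $d_2 \leq 1$ and contradicting $d_2 \geq 2$.

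For the second assertion the commutative sub-case gives (i). Assume $A$ is non-commutative, so $\dim A \geq 8$ by Lemma~\ref{lemma:centerdim3greater} and $d_1 \in \{2, 3\}$. For $d_1 = 3$, the dimension count becomes an equality $K(A) \oplus \soc(A) = J^2$, and the image of $K(A)$ in $J^2/J^3$ is spanned by the three brackets $[x_i, x_j]$, forcing $d_2 \leq 3$. A Loewy-length shrinking argument via Lemma~\ref{lemma:onedimensionallayersymmetric} --- any $d_i = 1$ for small $i$ makes $\dim Z(A) \geq 1 + \dim J^{i-1}$ too large --- then isolates $\ell = 4$, $(d_1, d_2, d_3) = (3, 3, 1)$, $\dim A = 8$, which is (ii)(a). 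For $d_1 = 2$, the image of $K(A)$ in $J^2/J^3$ is at most one-dimensional but $K(A) \oplus \soc(A)$ has codimension $1$ in $J^2$; the interplay forces $d_2 = 2$ and $J^3 \subseteq K(A) + \soc(A)$. Iterating at the next layer, where $K(A) \cap J^3 \subseteq [\{x,y\}, J^2] + [J^2, J^2]$ has image in $J^3/J^4$ spanned by the four brackets $[x_i, u_j]$ (with $u_1, u_2$ a basis of $J^2/J^3$), forces $d_3 = 2$, $\ell = 5$, $\dim A = 8$, which is (ii)(b).

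The main obstacle will be the bookkeeping needed to eliminate intermediate Loewy profiles --- in particular the sub-case $(d_1, d_2) = (3, 2)$ and candidate sequences such as $(2, 2, 2, 1, 1)$ of dimension $9$ --- which are not directly killed by the key inequality and Lemma~\ref{lemma:onedimensionallayersymmetric} alone. I expect these to require combining the Jacobi identity for the associative commutator bracket (which produces a nontrivial linear relation among the nine generators $[x_i, [x_j, x_k]]$ of $J^3/J^4$) with the forced vanishing $[J^2, J^2] \subseteq K(A) \cap \soc(A) = 0$ available once $J^4 \subseteq \soc(A)$, to rule out these remaining Loewy sequences and conclude $\dim A = 8$ in every non-commutative case with $\dim Z(A) = 5$.
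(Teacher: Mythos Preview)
Your proposal diverges substantially from the paper's proof. The paper does not attempt a self-contained argument: it cites \cite[Theorem~B]{KUL84} for the assertion that $\dim Z(A)\le 4$ forces commutativity, and cites the main result of \cite{CHL92} for the fact that $\dim Z(A)=5$ forces $\dim A\in\{5,8\}$. Only the Loewy structure in the case $\dim A=8$ is worked out directly, and even there the method differs from yours: the paper uses the orthogonality $(J^2)^\perp=\Ann_A(J^2)$ to obtain $J^2=(J^2)^\perp$ and hence $J^4=0$ in case~(a), rather than tracking images of $K(A)$ through the Loewy filtration.

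Your direct argument for $\dim Z(A)\le 4$ is correct and pleasant: the inequality $d_1\le\dim Z(A)-2$ from $K(A)\oplus\soc(A)\subseteq J^2$, combined with $d_1,d_2\ge 2$ and the one-dimensional image of $K(A)$ in $J^2/J^3$ when $d_1=2$, cleanly yields a contradiction. This is essentially the strategy of \cite{KUL84}.

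For $\dim Z(A)=5$, however, your plan has a genuine gap that your proposed tools cannot close. Take the profile $(d_1,\dots,d_5)=(2,2,2,1,1)$ with $\dim A=9$: here $d_4=1$ gives $J^3\subseteq Z(A)$ and $Z(A)=F\cdot 1\oplus J^3$ is exactly $5$-dimensional, so nothing in your Loewy-length shrinking argument excludes it. But $\dim J^4=2>1=\dim\soc(A)$, so $J^4\not\subseteq\soc(A)$ and your ``forced vanishing $[J^2,J^2]\subseteq K(A)\cap\soc(A)=0$'' is unavailable. Moreover, with only two generators $x,y$ there is no nontrivial Jacobi relation to exploit. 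Similar obstructions arise for the $(3,2,\dots)$ profiles and for $(2,2,3,1)$; your bound $d_3\le 4$ from the four brackets $[x_i,u_j]$ does not pin down $d_3=2$. The statement that $\dim A=8$ in the non-commutative case is the \emph{entire} content of \cite{CHL92} and genuinely requires finer structural arguments (careful choices of generators and product relations, not just dimension counts). You should either invoke \cite{CHL92} as the paper does, or be prepared for an argument of comparable length.
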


\begin{proof}
The result for $\dim Z(A) \leq 4$ is the statement of \cite[Theorem B]{KUL84}. Now let $\dim Z(A) = 5$. The fact that the algebra $A$ is of dimension five or eight is the main result of \cite{CHL92}. If $A$ is of dimension five, then $A$ is commutative, so let $\dim A = 8$. If $J^3 = 0$ holds, then Lemma~\ref{lemma:propertiessymmetriclocal} yields $\dim J^2/J^3 \leq 1$ and by Lemma~\ref{lemma:onedimensionallayersymmetric}, we obtain $J \subseteq Z(A)$. This implies that $A$ is commutative, which is a contradiction. Hence we have $J^3 \neq 0$, which yields $\dim J^3/J^4 \geq 1$ by Nakayama's lemma. If $\dim J^3/J^4 \geq 2$ holds, then Lemma~\ref{lemma:kultheoe} yields $\dim J/J^2 \geq 2$ and $\dim J^2/J^3 \geq 2$. Furthermore, we have $\dim J^4/J^5 \geq 1$ by Lemma \ref{lemma:propertiessymmetriclocal}. As $\dim A = 8$ and $\dim A/J = 1$ hold, this implies that $A$ has the Loewy structure given in (b). 
	It remains to consider the case $\dim J^3/J^4 = 1.$ Here, we obtain $J^2 \subseteq J(Z(A))$ by Lemma~\ref{lemma:onedimensionallayersymmetric} and hence $\dim J^2 \leq 4$ follows. On the other hand, we have $K(A) \subseteq J^2$ and hence $$(J^2)^\perp \subseteq K(A)^\perp \cap J = Z(A) \cap J = J(Z(A)),$$ which yields $\dim\, (J^2)^\perp \leq \dim J(Z(A)) = 4.$ Lemma \ref{lemma:propertiesperp}\,(i) then implies $\dim J^2 = \dim\, (J^2)^\perp = 4$ and hence $J^2 = J(Z(A)) = (J^2)^\perp.$ By Lemma \ref{lemma:propertiesperp}\,(v), this implies $J^4 = J^2 \cdot J^2 = J^2 \cdot (J^2)^\perp = 0$ and hence we obtain $\dim J^3 = 1$, so $A$ has the Loewy structure given in (a).
\end{proof}

We conclude this part with the following results on symmetric local $F$-algebras with a six-dimensional center, which are proven in \cite{DEI93}. 

\begin{lemma}[{\cite[Main theorem]{DEI93}}]\label{lemma:maintheoremdeiml}
Let $A$ be a symmetric local $F$-algebra with $\dim Z(A) = 6$ and $\dim J/J^2 = 2$. Then $\dim A \leq 12$ holds.
\end{lemma}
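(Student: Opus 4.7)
The plan is a Loewy-length case analysis exploiting the palindromic structure of the Loewy series of a symmetric local algebra. Let $\ell$ denote the Loewy length of $A$ and set $d_i \coloneqq \dim J^i/J^{i+1}$. A standard consequence of the self-duality $A \cong A^\ast$ for symmetric local algebras is that $(J^i)^\perp = J^{\ell-i}$ for every $i$: the inclusion $J^{\ell-i} \subseteq (J^i)^\perp$ is immediate from $J^i \cdot J^{\ell-i} = J^\ell = 0$, and equality follows by comparing dimensions via Lemma~\ref{lemma:propertiesperp}. Subtracting the resulting identity for consecutive $i$ yields the palindromic identity $d_i = d_{\ell-1-i}$; in particular $d_0 = 1 = d_{\ell-1}$ and $d_1 = 2 = d_{\ell-2}$.

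Small Loewy lengths are disposed of directly. For $\ell \leq 3$ we have $\dim A \leq 4 < 6 = \dim Z(A)$, contradicting the hypothesis. For $\ell = 4$ the Loewy vector is $(1,2,2,1)$, whence $\dim A = 6 = \dim Z(A)$ and $A$ is commutative. For $\ell = 5$ the vector is $(1, 2, d_2, 2, 1)$, and Lemma~\ref{lemma:kultheoe} applied to the two generators of $J$ gives $d_2 \leq 4$, so $\dim A \leq 10 \leq 12$.

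For $\ell \geq 6$ the strategy combines the recursive bound $d_{i+1} \leq 2 d_i$ from Lemma~\ref{lemma:kultheoe} with Lemma~\ref{lemma:chlz}, which forces $J^{i-1} \subseteq Z(A)$ (hence $\dim J^{i-1} \leq 6$) whenever $d_i = 1$. For $\ell = 6$ the palindromic vector is $(1, 2, d_2, d_2, 2, 1)$: the value $d_2 = 1$ makes $A$ commutative with $\dim A = 8$, contradicting $\dim Z(A) = 6$; the values $d_2 \in \{2, 3\}$ give $\dim A \leq 12$ directly; and the critical case $d_2 = 4$ must be excluded separately. For $\ell \geq 7$ one enumerates the admissible palindromic Loewy vectors starting with $1, 2$ and rules them out case by case, invoking Lemma~\ref{lemma:chlz} on internal one-dimensional layers (for instance, $(1,2,2,1,2,2,1)$ with $\ell = 7$ is excluded because $d_3 = 1$ forces $J^2 \subseteq Z(A)$ while $\dim J^2 = 8 > 6$), and using finer commutator-space arguments based on $\dim K(A) = \dim A - 6$ and $K(A) \subseteq J^2$ in the remaining cases. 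The principal obstacle is the case $\ell = 6$, $d_2 = 4$, with Loewy vector $(1, 2, 4, 4, 2, 1)$ and a priori dimension $14$: here one must show that if $a, b$ generate $J$ and the monomials $a^2, ab, ba, b^2$ are linearly independent modulo $J^3$, then $\dim Z(A) < 6$, which requires a detailed analysis of how $K(A)$ sits inside $J^2$ and how iterated commutators populate the higher Loewy layers.
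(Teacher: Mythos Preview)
The paper does not prove this lemma; it is quoted as the main theorem of \cite{DEI93} and no argument is given. So there is nothing in the paper to compare your attempt against, and I can only assess the proposal on its own terms.

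The foundation of your case analysis --- the identity $(J^i)^\perp = J^{\ell-i}$ and the resulting palindromic Loewy vector $d_i = d_{\ell-1-i}$ --- is not valid for symmetric local algebras, and your justification for it is circular. From $J^{\ell-i}\subseteq (J^i)^\perp$ you cannot conclude equality ``by comparing dimensions via Lemma~\ref{lemma:propertiesperp}'' unless you already know $\dim J^{\ell-i}+\dim J^i=\dim A$, which is precisely the palindromic identity you are trying to establish. What the self-duality $A\cong DA$ actually yields is $(J^i)^\perp=\soc^i(A)$, and in general only the inclusion $J^{\ell-i}\subseteq\soc^i(A)$ holds; the radical series and the socle series of a symmetric local algebra need not coincide. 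A concrete counterexample is furnished by the trivial extension construction of Section~\ref{sec:trivialextension}: for $B=F\langle x,y\rangle/(x^2,xy,yx,y^3)$ the algebra $T(B)$ is symmetric local of dimension $8$ with Loewy vector $(1,4,2,1)$, so already your claimed equality $d_1=d_{\ell-2}$ fails. This counterexample has $d_1=4$, not $2$, so it does not lie in the regime of the lemma; but your argument for palindromicity makes no use of the hypothesis $d_1=2$, so it cannot be correct as stated.

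Even if one were to grant palindromicity under the extra hypothesis $d_1=2$ (which you have not proved), the remainder of your proposal is a programme rather than a proof. The case $\ell=6$, $d_2=4$ is explicitly singled out as ``the principal obstacle'' and then not treated; for $\ell\geq 7$ you offer only an outline (``enumerate the admissible vectors and rule them out case by case'') together with a promise of ``finer commutator-space arguments'' that are never supplied. The actual argument in \cite{DEI93} is substantially longer and does not rest on a palindromicity assumption.
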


\begin{lemma}[{\cite[Lemma 2.6]{DEI93}}]\label{lemma:abcdeiml}
Let $A$ be a symmetric local $F$-algebra of dimension at least $11$ with $\dim Z(A) = 6$ and $\dim J/J^2 = \dim J^2/J^3 = 3$. By possibly replacing $A$ by its opposite algebra, we find elements $a,b,c \in J$ with $J = F\{a,b,c\} + J^2$ such that either $J^2 = F\{a^2, ab, ac\} + J^3$ or $J^2 = F\{a^2, ab,ba\} + J^3$ holds. 
\end{lemma}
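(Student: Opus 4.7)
The plan is to make a convenient choice of generators $a, b, c \in J$ with $J = F\{a, b, c\} + J^2$ by analyzing left and right multiplication on the layer $J/J^2$. Set $V \coloneqq J/J^2$ and $W \coloneqq J^2/J^3$; by hypothesis both are three-dimensional, and the multiplication of $A$ descends to a surjective bilinear map $V \times V \to W$. For $x \in V$, write $L_x, R_x \colon V \to W$ for left and right multiplication by $x$.

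The first step is the easy case: if some $x \in V$ has $L_x$ of rank $3$, then extending $\{x\}$ to any basis $\{x, y, z\}$ of $V$ makes $x^2, xy, xz$ a basis of $W$, giving the first alternative of the conclusion with $a \coloneqq x$, $b \coloneqq y$, $c \coloneqq z$. Symmetrically, if some $x$ has $R_x$ of rank $3$, the same argument applied in the opposite algebra yields the first alternative; this is exactly what the allowance to replace $A$ by its opposite in the statement buys us.

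The main work lies in the remaining case, where every $L_x$ and every $R_x$ has rank at most $2$. Here the goal becomes finding linearly independent $a, b \in V$ with $a^2, ab, ba$ linearly independent in $W$, and the hypothesis $\dim A \geq 11$ is decisive: combined with $\dim Z(A) = 6$ it gives $\dim K(A) = \dim A - \dim Z(A) \geq 5$, while $K(A) \subseteq J^2$ (since $A/J$ is commutative) and $\dim J^3 = \dim A - 7$ yield
$$\dim \overline{K} \geq \dim K(A) - \dim J^3 \geq 1, \qquad \overline{K} \coloneqq (K(A) + J^3)/J^3 \subseteq W.$$
Hence $A/J^3$ is non-commutative. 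I would then try to select $a \in V$ with $a^2 \neq 0$ such that $\Im(L_a)$ and $\Im(R_a)$ are both two-dimensional and together span $W$; since $a^2 = L_a(a) = R_a(a)$ lies in both images, this forces $\Im(L_a) \cap \Im(R_a) = F\{a^2\}$. Any $b \in V$ outside the two proper subspaces $L_a^{-1}(F\{a^2\})$ and $R_a^{-1}(F\{a^2\})$ (such $b$ exists since $F$ is infinite) then produces $ab \in \Im(L_a) \setminus F\{a^2\}$ and $ba \in \Im(R_a) \setminus F\{a^2\}$, and a direct check shows $a^2, ab, ba$ are linearly independent; completing $\{a, b\}$ to a basis $\{a, b, c\}$ of $V$ gives the second alternative.

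The main obstacle is securing the existence of such an $a$ in the last step: one must rule out degenerate configurations in which every $x \in V$ either has $x^2 = 0$ (making the multiplication modulo $J^3$ essentially anti-commutative, which in characteristic not $2$ forces $\overline{K} = W$ but case (ii) still to fail) or satisfies $\Im(L_x) = \Im(R_x)$. Excluding these will require exploiting the symmetric algebra structure more carefully --- in particular the non-degenerate pairing between $J^i/J^{i+1}$ and $J^{\ell-i-1}/J^{\ell-i}$ induced by the symmetrizing form, applied for $i = 1$ and $i = 2$ --- together with the dimension estimate for $\overline{K}$. Since the bound $\dim A \geq 11$ is sharp for this lemma, the argument must use the hypothesis in an essential way, presumably via a finer control on the Loewy structure.
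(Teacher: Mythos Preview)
The paper does not prove this lemma; it is quoted from Deiml \cite{DEI93} without argument, so there is no proof here to compare against. I can only assess your outline on its own merits.

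Your reduction into three cases is correct, and the two easy cases (some $L_x$ or some $R_x$ of full rank $3$) are handled cleanly. The substance lies entirely in the remaining case, and there you have not given a proof but a plan with a gap you yourself flag: you need an element $a \in V$ with $a^2 \neq 0$, both $\Im(L_a)$ and $\Im(R_a)$ two-dimensional, and $\Im(L_a) + \Im(R_a) = W$. Nothing you have written establishes that such an $a$ exists. The observation $\overline{K} \neq 0$ only says $L_a \neq R_a$ for \emph{some} $a$; it does not prevent the degenerate situations you list (for every $a$ with $a^2 \neq 0$ one has $\Im(L_a) = \Im(R_a)$, or $\Im(L_a) + \Im(R_a)$ always two-dimensional). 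Ruling these out is the actual content of the lemma, and it is missing.

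One correction: your claim that the bound $\dim A \geq 11$ is ``decisive'' for the inequality $\dim \overline{K} \geq 1$ is wrong. Your own computation gives
\[
\dim K(A) - \dim J^3 = (\dim A - 6) - (\dim A - 7) = 1,
\]
which uses only $\dim Z(A) = 6$ and the two Loewy dimensions, not $\dim A \geq 11$. So the hypothesis $\dim A \geq 11$ has not entered your argument anywhere. You are right that it must be used, but it will have to appear in the unfinished part --- most likely through the non-degenerate pairing $J/J^2 \times J^{\ell-2}/J^{\ell-1} \to \soc(A)$ induced by the symmetrizing form, which constrains the deeper Loewy layers once $\dim A$ is large enough.
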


\begin{Remark}
We take the opportunity to point out that Gerhard proved in her
diploma thesis \cite{GER93} that any symmetric local algebra $A$ satisfying $\dim Z(A) = 6$ and $\dim J^2/J^3 = 3$ has dimension at most~21. 
\end{Remark}

\subsection{Jacobson radical}

In this section, we investigate the Jacobson radical of $Z(A)$. We prove that $J(Z(A))$ is an ideal in $A$ if $A$ is a symmetric local algebra of dimension at most eleven and we provide an example of a symmetric local $F$-algebra $A$ of dimension twelve in which $J(Z(A))$ is not an ideal. This extends \cite[Theorem 3.2]{LAN20}, in which it is shown that $J(Z(A))$ is an ideal in $A$ if $A$ is a symmetric local algebra of dimension at most ten. 

\begin{theorem}\label{theo:dim11jzaideal}
	Let $A$ be a symmetric local $F$-algebra of dimension $\dim A \leq 11$. Then $J(Z(A))$ is an ideal in~$A$.
\end{theorem}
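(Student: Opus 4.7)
The plan is to apply Lemma \ref{lemma:condsocleprod}(ii), which reduces the statement to proving $J(Z(A)) \cdot K(A) = 0$. Since Landrock has handled $\dim A \leq 10$, only the case $\dim A = 11$ is new; the commutative situation being trivial, we assume $A$ is non-commutative.

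First I would narrow down $\dim Z(A)$. Lemma \ref{lemma:centerdim3greater} gives $\dim Z(A) \leq 8$, and Theorem \ref{theo:propertiessymmetricalgebra} excludes $\dim Z(A) \leq 5$ (commutativity for $\dim Z(A) \leq 4$; $\dim A \in \{5,8\}$ for $\dim Z(A) = 5$). Hence $\dim Z(A) \in \{6,7,8\}$, so by Lemma \ref{lemma:propertiesperp}(vi) we have $\dim K(A) = \dim A - \dim Z(A) \in \{3,4,5\}$; and since $Z(A)$ is local by Lemma \ref{lemma:propertiessymmetriclocal}(ii), $\dim J(Z(A)) = \dim Z(A) - 1 \in \{5,6,7\}$.

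Next the plan is to split into three cases by $\dim Z(A)$ and enumerate the admissible Loewy structures of $J(A)$ within each. The layer dimensions $\dim J^i/J^{i+1}$ must sum to $\dim J = 10$, with the top layer $J^{\ell-1}/J^\ell$ equal to $\soc(A)$ of dimension $1$ (Lemma \ref{lemma:propertiessymmetriclocal}(i), (iii)). Lemma \ref{lemma:onedimensionallayersymmetric} is a strong constraint: any one-dimensional Loewy layer $J^i/J^{i+1}$ forces $J^{i-1} \subseteq Z(A)$, so too many one-dimensional layers would exceed the available dimension of $Z(A)$. In the case $\dim Z(A) = 6$, Lemmas \ref{lemma:maintheoremdeiml} and \ref{lemma:abcdeiml} further restrict the options; in particular, when $\dim J/J^2 = \dim J^2/J^3 = 3$ they provide explicit generators $a,b,c$ of $J$ modulo $J^2$ together with a concrete spanning set of $J^2$ modulo $J^3$.

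For each resulting subcase, the task is to verify $J(Z(A)) \cdot K(A) = 0$ directly. A recurring tool is that, by Lemma \ref{lemma:propertiesperp}(v), $J^i \subseteq Z(A)$ forces $K(A) \cdot J^{\ell - i} = 0$ (where $J^\ell = 0$), so products involving $J(Z(A))$ get pushed into deep radical layers where dimension counting or the socle-inclusion of Lemma \ref{lemma:propertiessymmetriclocal} forces vanishing. I expect the main obstacle to be the case $\dim Z(A) = 6$ with $\dim J/J^2 = \dim J^2/J^3 = 3$: Lemma \ref{lemma:abcdeiml} produces two subcases (with $J^2$ spanned modulo $J^3$ by $\{a^2, ab, ac\}$ or by $\{a^2, ab, ba\}$), and in each a careful bookkeeping of commutators among the generators, combined with the constraint $\dim A = 11$, is needed to identify $J(Z(A))$ explicitly and verify its annihilation of $K(A)$.
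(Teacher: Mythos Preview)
Your plan is a plausible direct attack in the spirit of Landrock's argument, but the paper does something entirely different and much shorter. Instead of enumerating Loewy structures, the paper invokes Lemma~\ref{lemma:aicommutative}: for a symmetric local algebra, $J(Z(A))\trianglelefteq A$ fails precisely when some nonzero symmetric quotient $A'=A/(Az)^\perp$ (with $z\in J(Z(A))$) is non-commutative. Taking $A$ of minimal dimension with $J(Z(A))\not\trianglelefteq A$ and $\dim A\le 11$, one gets $\dim A'=\dim Az\ge 8$ by Theorem~\ref{theo:kultheob}; the case $\dim Az>8$ forces $J=Fx+Az$, hence $A$ commutative. So $\dim Az=8$, $\dim Z(A')=5$, and then $\dim A'z'>4$ while minimality gives $A'z'\subseteq J(Z(A'))$ of dimension $\le 4$, a contradiction. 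No Loewy case split is needed at all.

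Your route would in principle work, but as written it is only an outline: the cases $\dim Z(A)=6,7,8$ are not actually carried out, and the hardest one (which you correctly identify) requires substantial bookkeeping that you do not provide. Two small points to flag as well. First, Lemma~\ref{lemma:maintheoremdeiml} only yields $\dim A\le 12$ when $\dim Z(A)=6$ and $\dim J/J^2=2$, so it does \emph{not} exclude $\dim A=11$; that subcase would still need a full treatment. Second, your claimed implication ``$J^i\subseteq Z(A)$ forces $K(A)\cdot J^{\ell-i}=0$'' is not what Lemma~\ref{lemma:propertiesperp}(v) gives: from $J^i\subseteq Z(A)=K(A)^\perp$ one obtains $K(A)\subseteq (J^i)^\perp=\Ann_A(J^i)$, i.e.\ $K(A)\cdot J^i=0$, not $K(A)\cdot J^{\ell-i}=0$. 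The paper's quotient-and-minimality argument bypasses all of this.
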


\begin{proof}
Let $A$ be a symmetric local algebra of minimal dimension in which $J(Z(A))$ is not an ideal, and assume $\dim A \leq 11$. By Lemma \ref{lemma:aicommutative}, there exists a nonzero ideal $I$ of $A$ such that $A' \coloneqq A/I$ is symmetric and non-commutative. By Theorem \ref{theo:kultheob} and Lemma \ref{lemma:centerdim3greater}, this implies $\dim Z(A') \geq 5$ and $\dim A' \geq 8$. By Lemma \ref{lemma:quotientalgebrasymmetric}, there exists an element $z \in J(Z(A))$ with $I = (Az)^\perp$. Note that we have $\dim Az = \dim A'$ and $Az \subseteq J$. If $\dim Az > 8$ holds, then there exists $x \in J$ with $J = Fx + Az = Fx + Fz + J^2$ since we have $\dim J \leq 10$. Since $A$ is then generated by $x$ and $z$, the algebra is commutative, a contradiction. Hence we have $\dim Az = 8$ and $\dim Z(A') = 5$. Moreover, $z' \coloneqq z+I$ is contained in $J(Z(A'))$ and we have $\dim A'/A'z' \leq \dim A/Az \leq 3$, which implies $\dim A'z' > 4$. Since $\dim A' < \dim A$ holds, $J(Z(A'))$ is an ideal in $A'$ by assumption. In particular, $A'z'$ is contained in $J(Z(A'))$ and hence we have $\dim A'z' \leq \dim J(Z(A')) = 4$, a contradiction.  
\end{proof}

\begin{Example}
	Let $F$ be an algebraically closed field of odd characteristic. We consider the unitary subalgebra $A$ of $\Mat_{12}(F)$ generated by the matrices 
	$$M = \begin{pmatrix}
	. & . & . & . & . & . & . & . & . & . & . & .\\
	1 & . & . & . & . & . & . & . & . & . & . & .\\
	. & . & . & . & . & . & . & . & . & . & . & .\\
	. & 1 & . & . & . & . & . & . & . & . & . & .\\
	. & . & 1 & . & . & . & . & . & . & . & . & .\\
	. & . & . & 1 & . & . & . & . & . & . & . & .\\
	. & . & . & . & 1 & . & . & . & . & . & . & .\\
	. & . & . & . & . & 1 & . & . & . & . & . & .\\
	. & . & . & . & . & . & 1 & . & . & . & . & .\\
	. & . & . & . & . & . & . & 1 & . & . & . & .\\
	. & . & . & . & . & . & . & . & 1 & . & . & .\\
	. & . & . & . & . & . & . & . & . & 1 & . & .\\
	\end{pmatrix}$$
	and
	$$N = \begin{pmatrix}
	. & . & . & . & . & . & . & . & . & . & . & .\\
	. & . & . & . & . & . & . & . & . & . & . & .\\
	1 & . & . & . & . & . & . & . & . & . & . & .\\
	. & . & 1 & . & . & . & . & . & . & . & . & .\\
	. & -1 &. & . & . & . & . & . & . & . & . & .\\
	. & . & . & . &-1 & . & . & . & . & . & . & .\\
	. & . & . & 1 & . & . & . & . & . & . & . & .\\
	. & . & . & . & . & . & 1 & . & . & . & . & .\\
	. & . & . & . & . &-1 & . & . & . & . & . & .\\
	. & . & . & . & . & . & . & . &-1 & . & . & .\\
	. & . & . & . & . & . & . & 1 & . & . & . & .\\
	. & . & . & . & . & . & . & . & . & . & 1 & .\\
	\end{pmatrix}.
	$$
	Here, zero entries are represented by dots. A short computation shows $M^7 = M^5 N = NM + MN = N^2-M^2 = 0$. Hence the set
	$$B \coloneqq \left\{\id,M, N, M^2, MN, M^3, M^2N, M^4, M^3 N, M^5, M^4N, M^6\right\}$$ generates $A$ as an $F$-vector space. One easily verifies that these elements are linearly independent, so $B$ is an $F$-basis of $A$. In particular, we obtain $\dim A = 12$. Since all nontrivial basis elements are nilpotent, the algebra $A$ is local.
	\bigskip
	
	Let $s \in \soc(A)$ and write $s = \sum_{v \in B} c_v v$ with coefficients $c_v \in F.$ The condition $s M^6 = 0$ translates to $c_{\id} = 0$ since all other products vanish. Due to $M^5 N = M^7 = 0$, we obtain $0 = s \cdot M^5 = c_M \cdot  M^6,$ so $c_M = 0$, and $0 = s \cdot M^4 N = c_N \cdot  M^6,$ which yields $c_N = 0.$ Continuing this way yields $s= c_{M^6} \cdot  M^6$, so $\soc(A) = F M^6$ is one-dimensional. 
	\bigskip
	
	By directly calculating the commutators of the elements in $B$, we see that an $F$-basis of $K(A)$ is given by $\{MN, M^2N, M^3N, M^4N, M^3, M^5\}$ and hence we obtain $\dim K(A) = 6$. Note that $K(A) \cap \soc(A) = 0$ holds. Thus we can define a linear form $\lambda \colon A \to F$ by setting $\lambda(M^6) = 1$ and $\lambda(b) = 0$ for $b \in B \backslash \{M^6\}$, and extending this $F$-linearly to $A$. Then the kernel of $\lambda$ does not contain any nonzero left ideals of $A$. Moreover, $\lambda$ vanishes on $K(A)$. In particular, $\lambda(ab) - \lambda(ba) = \lambda(ab-ba) = \lambda([a,b]) = 0$ holds and hence $\lambda$ is symmetric. By \cite[Theorem IV.2.2]{SKO11}, this shows that $(A,\lambda)$ is a symmetric local algebra. 
	\bigskip
	
	It is easily verified that the set $\{\id, M^2, M^4, M^5, M^4N, M^6\}$ is contained in $Z(A)$. This is even an $F$-basis of $Z(A)$ since we have $\dim Z(A) = \dim A - \dim K(A) = 6$. As $M^2$ is nilpotent, we have $M^2 \in J(Z(A))$. However, $M \cdot M^2 = M^3$ is not contained in $Z(A)$, so $J(Z(A))$ is not an ideal of $A$. 
\end{Example}

\subsection{Socle}
We now investigate the corresponding problem for the socle. First we show that $\soc(Z(A))$ is an ideal of $A$ if $A$ is a symmetric local algebra of dimension at most $16$. In the second part of this section, we prove that there exists a local trivial extension algebra $T$ of dimension $20$ with $\soc(Z(T)) \not \trianglelefteq T$. The dimensions 17, 18 and 19 remain open. 

\begin{theorem}\label{theo:dim16}
	Let $A$ be a symmetric local $F$-algebra. If $\dim A \leq 16$ holds, then $\soc(Z(A))$ is an ideal in $A$.
\end{theorem}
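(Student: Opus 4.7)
The plan is to argue by contradiction, following the overall structure of the proof of Theorem~\ref{theo:dim11jzaideal}. Assume $A$ is a symmetric local $F$-algebra of minimum possible dimension with $\dim A \leq 16$ and $\soc(Z(A)) \not\trianglelefteq A$. Combining Theorem~\ref{theo:dim11jzaideal} with the corollary to Lemma~\ref{lemma:socinj} shows that $\soc(Z(A)) \trianglelefteq A$ already holds whenever $\dim A \leq 11$, so the minimal counterexample satisfies $12 \leq \dim A \leq 16$. Moreover, Lemma~\ref{lemma:centerdim3greater} and Theorem~\ref{theo:propertiessymmetricalgebra} force $\dim Z(A) \geq 5$, and Proposition~\ref{prop:quotientalgebra}(ii) guarantees that $\soc(Z(A/I)) \trianglelefteq A/I$ for every nonzero ideal $I \trianglelefteq A$ with $A/I$ symmetric.

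Next, I would derive a socle analogue of Lemma~\ref{lemma:aicommutative}. Using the adjointness $K(A) \cdot z = 0 \Leftrightarrow K(A) \subseteq (Az)^\perp$, Lemma~\ref{lemma:condsocleprod}(iii) together with Lemmas~\ref{lemma:commutatorsmallestideal} and~\ref{lemma:quotientalgebrasymmetric} yields
\[
\soc(Z(A)) \trianglelefteq A \iff A/(Az)^\perp \text{ is commutative for every } z \in \soc(Z(A)).
\]
Hence there exists $0 \neq z \in \soc(Z(A))$ with $K(A) \cdot z \neq 0$, and $A' \coloneqq A/(Az)^\perp$ is symmetric, local, and non-commutative, of dimension $\dim A' = \dim Az \leq 15$. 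In particular, $\dim Z(A') \geq 5$ and $\dim A' \geq 8$ by Theorem~\ref{theo:propertiessymmetricalgebra} and Lemma~\ref{lemma:centerdim3greater}.

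I would then split into cases according to $\dim Z(A)$. Theorem~\ref{theo:propertiessymmetricalgebra} rules out $\dim Z(A) = 5$. For $\dim Z(A) = 6$, Lemmas~\ref{lemma:maintheoremdeiml} and~\ref{lemma:abcdeiml} restrict the Loewy structure of $A$ sharply, and $\soc(Z(A)) \cdot K(A) = 0$ should follow by a direct calculation using Lemmas~\ref{lemma:kultheoe}, \ref{lemma:onedimensionallayersymmetric} and~\ref{lemma:propertiessymmetriclocal}. In the remaining range $7 \leq \dim Z(A) \leq 13$, I would compare, inside $A'$, the dimension of $A'z' = (Az + (Az)^\perp)/(Az)^\perp$ with $\dim J(Z(A')) = \dim Z(A') - 1$ (Lemma~\ref{lemma:propertiessymmetriclocal}(ii)); since $z' \coloneqq z+(Az)^\perp \in J(Z(A'))$ and $\soc(Z(A')) \trianglelefteq A'$ by minimality, a refinement of the argument used in Theorem~\ref{theo:dim11jzaideal} should force $A'z'$ into $J(Z(A'))$ while $\dim A'z'$ is simultaneously too large to fit there, producing the sought contradiction.

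The principal obstacle is that, in contrast to Theorem~\ref{theo:dim11jzaideal}, the image $\nu(\soc(Z(A)))$ in $A'$ need not be contained in $\soc(Z(A'))$, so the descent does not propagate purely formally. One has to pull quantities back through the adjoint $\nu^\ast$ of Section~\ref{sec:quotientssymmetric}, invoking Lemma~\ref{lemma:nustarmapscentertocenterandsoctosoc}(iii) to relate $\soc(Z(A'))$ to the annihilator of $\nu(J(Z(A)))$. A secondary difficulty is that no single structural theorem covers $7 \leq \dim Z(A) \leq 13$; the argument there must be assembled piecewise from the Loewy-layer constraints provided by Lemmas~\ref{lemma:kultheoe}, \ref{lemma:chlab}, \ref{lemma:onedimensionallayersymmetric} and~\ref{lemma:propertiessymmetriclocal}, and it is precisely here that the bound $\dim A \leq 16$ enters as the tight threshold.
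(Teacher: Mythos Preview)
Your outline misses the single observation that makes the bound $16$ work. Since $z \in \soc(Z(A)) \subseteq J(Z(A))$ (Lemma~\ref{lemma:socinj}), the element $z$ annihilates $J(Z(A))$, so $z^2 = 0$. Hence $Az \cdot Az = 0$, i.e.\ $Az \subseteq \Ann_A(Az) = (Az)^\perp$, and therefore
\[
\dim A = \dim Az + \dim (Az)^\perp \geq 2\dim Az \geq 16.
\]
This forces $\dim A = 16$, $\dim Az = 8$, $Az = (Az)^\perp$, and $\dim Z(A') = 5$ all at once. From $J(Z(A)) \subseteq (Az)^\perp = Az$ and $Z(A) \cap Az \subseteq J(Z(A))$ one then reads off $\dim Z(A) = 6$. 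There is no case split on $\dim Z(A)$: the range $7 \leq \dim Z(A) \leq 13$ simply does not occur, and your proposed comparison of $\dim A'z'$ with $\dim J(Z(A'))$ in that range (which is where you said the ``tight threshold'' enters) is neither needed nor workable as stated, because $z' \in \soc(Z(A'))$ is not available---that is exactly the obstacle you flagged yourself.

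The minimality hypothesis and Proposition~\ref{prop:quotientalgebra}(ii) are likewise red herrings here; the paper's proof is direct, not inductive. Once one is reduced to $\dim A = 16$ and $\dim Z(A') = 5$, the two Loewy structures of $A'$ in Theorem~\ref{theo:propertiessymmetricalgebra} are examined: type~(b) gives $\dim J/J^2 = 2$ and is killed by Lemma~\ref{lemma:maintheoremdeiml}, while type~(a) gives $\dim J/J^2 = 3$, and a substantial computation with Lemma~\ref{lemma:abcdeiml} and repeated use of Lemma~\ref{lemma:kultheoe} (tracking which monomials in $a,b$ lie in $J^4$) eventually forces $J^3 = bJ^2 + J^4$, a contradiction. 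Your outline gestures at this last part but does not anticipate its length or the specific combinatorics involved.
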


\begin{proof}
Assume that $A$ is a symmetric local algebra of dimension at most $16$ in which $\soc(Z(A))$ is not an ideal. By Lemma~\ref{lemma:condsocleprod}, there exists an element $z \in \soc(Z(A))$ with $z \cdot K(A) \neq 0$. By Lemma~\ref{lemma:quotientalgebrasymmetric}, $A' \coloneqq A/(Az)^\perp$ is a symmetric local algebra. Since $K(A)$ is not contained in $(Az)^\perp$, the algebra $A'$ is not commutative, which yields $\dim Az = \dim A' \geq \dim Z(A') + 3 \geq 8$	by Theorem \ref{theo:kultheob} and Lemma~\ref{lemma:centerdim3greater}. Since $z \in \soc(Z(A)) \subseteq J(Z(A))$ holds (see Lemma~\ref{lemma:socinj}), we obtain $z^2 = 0.$ In particular, we have $Az \subseteq (Az)^\perp$ and hence Lemma \ref{lemma:propertiesperp} yields $\dim A = \dim Az + \dim\, (Az)^\perp \geq 2 \cdot \dim Az \geq 16.$ By assumption, we then obtain $\dim A = 16$, which yields $\dim A' = \dim Az = \dim\, (Az)^\perp = 8$ and hence $Az = (Az)^\perp$. 
\bigskip

We also see $\dim Z(A') = 5$ and
conclude that $3 = \dim Z(A')^\perp = \dim K(A') = \dim K(A)+Az/Az$ holds. This implies $\dim K(A) + Az = 11$ and $5 = \dim (K(A) + Az)^\perp= \dim Z(A) \cap Az$. Moreover, $z \in \soc(Z(A))$ forces $$J(Z(A)) \subseteq Z(A) \cap (Az)^\perp  = Z(A) \cap Az \subseteq J(Z(A)),$$ so that $J(Z(A)) = Z(A) \cap Az$ and $\dim Z(A) = 6$. Hence we have $\dim K(A) = 10$ and $\dim K(A) \cap Az = 10 + 8 - 11 = 7$. Since $\soc(A) \subseteq Az$ and $\soc(A) \cap K(A) = 0$ we obtain $Az = (K(A) \cap Az) \oplus \soc(A) \subseteq J^2$, and this yields $\dim J/J^2 = \dim J(A')/J(A')^2$. 
\bigskip

The two possible Loewy structures of $A'$ are given in Theorem \ref{theo:kultheob}. If $A'$ has a Loewy structure of type (b), we have $ \dim J/J^2 = \dim J(A')/J^2(A') = 2$. By Lemma \ref{lemma:maintheoremdeiml}, this yields the contradiction $\dim A \leq 12$. 
Hence the Loewy structure of $A'$ is of type (a) in Theorem \ref{theo:kultheob}. We have $\dim J/J^2 = \dim J(A')/J^2(A') = 3$
and $\dim J^2/J^3 \geq \dim J^2(A')/J^3(A') = 3$.
\bigskip

Writing $J = F\{a,b,c\} + J^2$ for elements $a,b,c \in J$, we obtain $$K(A) \subseteq F\{[a,b], [b,c], [a,c]\} + J^3.$$ Thus $\dim A/J^3 \leq 3 + \dim A/K(A) + J^3$ follows. On the other hand, we have
$Az = (K(A) \cap Az) + \soc(A) \subseteq K(A) + J^3$, which implies
$\dim A/K(A) + J^3 = \dim A'/K(A') + J(A')^3 < \dim A'/K(A') = 5$.
Hence $\dim A/J^3 \leq 7 = \dim A'/J(A')^3 \leq \dim A/J^3$, so
that $\dim A/J^3 = 7$ and $\dim J^3 = 9$. We also conclude that
$Az \subseteq J^3$ and $Jz \subseteq J^4$ hold.

%
%
\bigskip

On the other hand, we have $J(A')^4 = 0$ and hence $J^4 \subseteq Az$. Since $J^3$ is not contained in $Z(A)$, Lemma~\ref{lemma:onedimensionallayersymmetric} yields $\dim J^3/J^4 \geq 2$ and hence $J^4$ is a proper subset of $Az$. As in Lemma~\ref{lemma:propnustar}, the map 
$$\varphi \colon A' \to Az,\ a+ Az \mapsto az$$ is an $A$-bimodule isomorphism. With this, we see that $Jz$ is the unique maximal submodule of $Az$ and hence $J^4 = Jz$ follows. Furthermore, we obtain $\dim J^5  = \dim J^2z = 4$ and $\dim J^6 = \dim J^3 z = 1$. 
\bigskip

If we have $J^3 = x J^2 + J^4$ for some $x \in J$, then $J^4 = x^2 J^2 + J^5$ follows and we obtain the contradiction $\dim J^4/J^5 \leq \dim J^3 /J^4 = 2$. Hence $\dim x J^2 + J^4/J^4 < \dim J^3/J^4 = 2$ holds for all $x \in J$. Similarly, we obtain $\dim J^2 x + J^4/J^4 < 2$ for all $x \in J$. 
By Lemma \ref{lemma:abcdeiml}, there exist elements $a,b,c \in J$ with $J = F\{a,b,c\} + J^2$ and $J^2 = F\{a^2, ab,ba \} + J^3$. By Lemma \ref{lemma:kultheoe}, we obtain $J^3 = F\{a^3, a^2b, aba, ba^2, bab,b^2a\} + J^4$. Assume that $a^3$ is not contained in $J^4$. Then $a J^2$ and $J^2a $ are contained in $Fa^3 + J^4$. Hence $J^3 = F\{a^3, bab\} + J^4$ follows, so $bab$ is not contained in $J^4$. This yields the contradiction $$J^4 = F\{a^4, abab, ba^3, b^2 ab\} + J^5 = F\{a^4, a^3b\} + J^5.$$
Hence $a^3$ is contained in $J^4$. Now assume $a^2b \notin J^4$. Then $J^3 = F\{a^2b, ba^2, b^2a\} + J^4$ holds. First let $J^3 = F\{a^2b, ba^2\} + J^4$. Then $ba^2$ is not contained in $J^4$ and we obtain the contradiction
$$J^4 = F\{a^3b,aba^2,ba^2b,b^2a^2\} + J^5 = F\{ba^2b,b^2a^2\} + J^5,$$ since
$a^3b \in J^5$ and $aba^2 \in Fba^3 + J^5 = J^5$ hold. Analogously, let $J^3 = F\{a^2b,b^2a\} + J^4$, that is, $b^2a$ is not contained in $J^4$. This yields the contradiction
$$J^4 = F\{a^3b,ab^2a,ba^2b,b^3a\} + J^5 = F\{ba^2b,b^3a\} + J^5$$ due to $a^3b \in J^5$ und $ab^2a \in Fa^3b + J^5 = J^5$. Hence we may assume $a^2b \in J^4$. In case that $aba$ is not contained in $J^4$, we obtain $J^3 = F\{aba,bab\}$ and $bab$ is not contained in $J^4$. This yields the contradiction
$$J^4 = F\{a^2ba,abab,baba,b^2ab\} + J^5 = F\{abab,baba\} + J^5$$ as in the previous cases. Hence we may additionally assume $aba \in J^4$. This implies $J^3 =  bJ^2 + J^4$, which is a contradiction. 
\end{proof}

For local trivial extension algebras, we obtain the following result:

\begin{lemma}
Let $A$ be a local algebra of dimension $\dim A \leq 9$. Then: 
\begin{enumerate}[(i)]
\item $I \coloneqq K(A) + A \cdot J(Z(A))$ is an ideal of $A$. 
\item $S \coloneqq \{b \in \soc(Z(A)) \colon Ab \subseteq K(A)\}$ is an ideal of $A$. 
\item For the corresponding trivial extension algebra $T \coloneqq T(A)$, we have $\soc(Z(T)) \trianglelefteq T$. 
\end{enumerate}
\end{lemma}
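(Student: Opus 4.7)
The plan is to deduce (iii) from (i) and (ii) via Theorem \ref{theo:soctaideal}(ii), which characterizes $\soc(Z(T)) \trianglelefteq T$ as the conjunction $I \trianglelefteq A$ and $S \trianglelefteq A$. The main task is therefore to establish (i) and (ii).

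Let $\ell$ denote the nilpotency index of $J \coloneqq J(A)$. Two structural observations hold for any local $F$-algebra: first, $K(A) \subseteq J^2$, since $A = F \cdot 1 + J$ and $[\alpha + j, \beta + j'] = [j,j'] \in J^2$; second, $J^{\ell-1} \subseteq Z(A) \cap J = J(Z(A))$, because $J^{\ell-1} \cdot J = J \cdot J^{\ell-1} = 0$ forces $J^{\ell-1}$ to commute with all of $A$. Consequently $A \cdot K(A) = K(A) + J \cdot K(A)$, so (i) reduces to $J \cdot K(A) \subseteq K(A) + A \cdot J(Z(A))$. Moreover, any $b \in S$ satisfies $b = 1 \cdot b \in Ab \subseteq K(A)$, so $S \subseteq K(A) \cap \soc(Z(A))$ and hence $S \cdot K(A) \subseteq J^4$; by the criterion in the proof of Theorem \ref{theo:soctaideal}(ii), (ii) is equivalent to $S \cdot K(A) = 0$.

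The easy case is $\ell \leq 4$: here $J^4 = 0$ and $J^3 \subseteq J(Z(A))$, so $J \cdot K(A) \subseteq J^3 \subseteq A \cdot J(Z(A))$ proves (i), and $S \cdot K(A) \subseteq J^4 = 0$ proves (ii). Since any non-commutative local algebra has $\dim J/J^2 \geq 2$, the case $\ell \geq 5$ already forces $\dim A \geq 6$.

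The main obstacle is handling $\ell \geq 5$ with $\dim A \leq 9$. The plan is a short case analysis on the Loewy series. If some layer satisfies $\dim J^i/J^{i+1} = 1$ with $2 \leq i \leq \ell - 1$, then Lemma \ref{lemma:onedimensionallayersymmetric} gives $J^i \subseteq J(Z(A))$, which truncates the nilpotency of the ``non-central part'' of $J$ and reduces us to the easy case. Otherwise every such layer has dimension at least $2$, and Lemma \ref{lemma:kultheoe} together with the bound $\dim J \leq 8$ restricts the Loewy structure to a short explicit list. On each remaining configuration, the containments $J \cdot K(A) \subseteq K(A) + A \cdot J(Z(A))$ and $S \cdot K(A) = 0$ will be verified by tracing the commutator relation $a[b,c] = [ab,c] - [a,c]b$ together with $J^{\ell-1} \subseteq J(Z(A))$ to absorb all obstruction terms into $K(A)$ or $A \cdot J(Z(A))$.
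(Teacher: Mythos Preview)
Your reductions are sound and match the paper: (iii) follows from (i) and (ii) via Theorem~\ref{theo:soctaideal}, and both (i) and (ii) hold once $J^3 \subseteq Z(A)$ --- in particular when $\ell \leq 4$, or when some Loewy layer $J^i/J^{i+1}$ with $i \leq 3$ is one-dimensional (then $J\cdot K(A)\subseteq J^3\subseteq A\cdot J(Z(A))$, and for $z\in S$ one gets $Jz\subseteq J^3\subseteq Z(A)$, hence $Az\subseteq Z(A)$ and $z\cdot K(A)=0$). But your dichotomy is miscalibrated for $i\geq 4$: knowing $\dim J^{\ell-1}=1$ only restates $J^{\ell-1}\subseteq J(Z(A))$, which does not help since $J\cdot K(A)$ sits in $J^3$, not $J^{\ell-1}$. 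The genuine hard case is therefore $\dim J^k/J^{k+1}\geq 2$ for $k=1,2,3$ with $0\ne J^4$ (so $\dim J^4\leq 2$ and $J^4\subseteq Z(A)$), and this comprises several Loewy profiles, not just the one with all layers of dimension~$2$.

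The real gap is that your last sentence is a hope, not an argument: the identity $a[b,c]=[ab,c]-[a,c]b$ alone does not settle either claim. For (i) the paper first bounds $\dim\bigl(K(A)+J^3\bigr)/J^3\leq 2$, passes to $B=A/J^4$, and then exploits a linear dependence $\alpha[a,b]+\beta[a,c]+\gamma[b,c]\in J(B)^3$ together with $J(B)^4=0$ to rewrite the offending element $c[a,b]$ as a single commutator $[ca,\beta a+b]$, forcing it into $K(B)$ --- a manoeuvre that has no analogue in your outline. For (ii) the argument is longer still: from a hypothetical $z\in S$ with $Az\not\subseteq Z(A)$ one locates $a,b$ with $[a,b]z\ne 0$, shows $z\notin J^3$, splits on whether $[a,b]+J^3$ and $z+J^3$ are independent in $J^2/J^3$, pins the Loewy profile down to $(3,2,2,1)$ with $J^2=K(A)$, and then invokes Lemma~\ref{lemma:chlab} to write $J^2=F\{x^2,xy\}+J^3$ and eliminate a system of structure constants until $K(A)\subseteq F[x,y]+J^3$, a contradiction. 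You need to supply these (or equivalent) steps.
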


\begin{proof}
$\null$
\begin{enumerate}[(i)]
\item Assume that $A$ is a counterexample. Then $A \cdot K(A) = K(A) \cdot A$ is not contained in $I$. Since $A = F \cdot 1 \oplus J$ holds, we obtain $JK(A) \not \subseteq I$. Because of $K(A) = [A,A] = [J,J] \subseteq J^2$, we have $JK(A) \subseteq J^3$, so $J^3$ is not contained in $I$ and, in particular, not in $J(Z(A))$. Then $J/J^2$, $J^2/J^3$ and $J^3/J^4$ are of dimension at least two (see Lemma \ref{lemma:chlz}), which implies $\dim J^4 \leq 2$. This yields $J^4 \subseteq Z(A)$ since for $J^5 = 0$, we obtain $[A,J^4] = [J,J^4] \subseteq J^5 = 0$ and for $\dim J^5 = 1$, we have $\dim J^4/J^5 \leq 1$ and the statement follows by Lemma \ref{lemma:chlz}.
On the other hand, $J^3 \not \subseteq J(Z(A))$ implies $J^4 \neq 0$, so $\dim J/J^2 \leq 3$ holds. Furthermore, we have $\dim K(A) + J^3/J^3 \leq 2$ since otherwise $\dim J^2/J^3 \geq 3$ and hence $\dim J/J^2 \leq 2$ follows. Writing $J = F\{x,y\} + J^2$ for some $x,y  \in J$ then yields the contradiction $K(A) = [J,J] \subseteq F[x,y] + J^3$. 
\bigskip

Set $B \coloneqq A/J^4$ and $N \coloneqq J(B)$. Then $N^4 = 0$ holds and there exist $a,b,c \in N$ with $N = F\{a,b,c\} + N^2$. Furthermore, $\dim K(B) + N^3/N^3 \leq 2$ holds and $NK(B)$ is not contained in $K(B) \subseteq F\{[a,b],[a,c],[b,c]\} + N^3 \subseteq N^2$. Hence we have $N K(B) = N[a,b] + N[a,c] + N[b,c]$. Due to symmetry reasons, we may assume that $N[a,b]$ is not contained in $K(B)$. Since $K(B)$ contains the elements $a[a,b]$ and $b[a,b]$, it follows that $c[a,b]$ is not contained in $K(B)$. On the other hand, since $\dim K(B) + N^3/N^3 \leq 2$ holds, there exists a nonzero tuple $(\alpha, \beta, \gamma) \in F^3$ with
$$\alpha [a,b] + \beta [a,c] + \gamma [b,c] \in N^3.$$
If $\alpha$ is nonzero, then $[a,b]$ is contained in
$F\{[a,c],[b,c]\} + N^3$. We then obtain the contradiction $c[a,b] \in F\{c[a,c], c[b,c]\} \subseteq K(B)$.
Hence we have $\alpha = 0$ and then $[\beta a + \gamma b, c] \in N^3$. Due to symmetry reasons, we may assume $\gamma \neq 0$. By dividing by $\gamma$, we may even assume $\gamma =1$.
But then we have $[\beta a + b,c] a \in
N^4 = 0$ and we obtain the contradiction
$$c[a,b] = c[a,\beta a + b] = ca(\beta a + b) - c(\beta a + b)a
= ca(\beta a + b) - (\beta a + b)ca = [ca, \beta a + b]
\in K(B).$$ 
\item Assume that there exists an element $z \in \soc(Z(A))$ with $Az \subseteq K(A)$ such that $Az \not \subseteq Z(A)$ holds. Since we have $z \in K(A) \subseteq J^2$, this yields $J^3 \not \subseteq Z(A)$ and hence $J^4 \neq 0$. As in the proof of (i), it follows that $J/J^2$, $J^2 /J^3$ and $J^3 /J^4$ are at least two-dimensional and we have $\dim J/J^2 \leq 3$ as well as $J^4 \subseteq Z(A)$. In particular, we obtain $z \notin J^3$, but $z^2 = 0$ holds due to $z \in \soc(Z(A))$.
\bigskip

By Lemma \ref{lemma:condsocleprod}, we have $K(A) \cdot z \neq 0$. Hence there exist elements $a,b \in J$ with $0 \neq [a,b] z = [az,b] = [a,bz]$. This yields $az \notin Z(A)$, so $az \notin J^4$ holds. Due to $[Fa,Faz + J^4] = [Fa,J^4]
= 0$, we have $bz \notin Faz + J^4$, so $az + J^4$ and $bz + J^4$ are linearly independent in $J^3/J^4$. Hence $a + J^2$ and $b + J^2$ are linearly independent in $J/J^2$.
\bigskip

Assume that $[a,b]+J^3$ and $z+J^3$ are linearly dependent in $J^2/J^3$. Then we have $[a,b] = \beta z + y$ for some $\beta \in F$ and $y \in J^3$. Hence $0 \neq [a,b]z = \beta z^2 + yz = yz$ follows. In particular, we have $J^5 \neq 0$. Hence the Loewy layers $J^i/J^{i+1}$
($i=0,...,5$) of $A$ are of dimensions 1,2,2,2,1,1. This implies $J^3 = F\{az,bz\} + J^4$ and $J^3 z = 0$, a contradiction to $yz \neq 0$. 
\bigskip

Hence $[a,b]+J^3$ and $z+J^3$ are linearly independent in $K(A)+J^3/J^3 \subseteq J^2/J^3$. In particular, we obtain $\dim K(A)+J^3/J^3 \geq 2$. As in (i), this implies $\dim J/J^2 = 3$. 
It follows that the Loewy layers $J^i/J^{i+1}$ ($i=0,...,4$) of $A$ are of dimensions 1,3,2,2,1. This yields $$J^2 = K(A) + J^3 =
K(A) + Az = K(A).$$ By Lemma \ref{lemma:chlab}, there exist elements $x,y, w\in J$ such that $J = F\{x,y,w\} + J^2$ holds and $x^2+J^3$ and $xy+J^3$ form a basis of $J^2/J^3$
(by possibly going over to the opposite algebra of $A$).
%
%
%
By Lemma~\ref{lemma:kultheoe}, $x^3 + J^4$ and $x^2y + J^4$ form an $F$-basis of $J^3/J^4$. Write $xw \equiv \alpha x^2 + \beta xy \pmod{J^3}$ for some $\alpha, \beta \in F$. By replacing $w$ by $\bar{w} \coloneqq w - \alpha x - \beta y$, we may assume $xw \in J^3$. Furthermore, there exist coefficients $\alpha_i, \beta_i \in F$ ($i = 1,\ldots, 4$) with 
	\begin{alignat*}{1}
	yw &\equiv \alpha_1 x^2 + \beta_1 xy \pmod{J^3} \\
	wx &\equiv \alpha_2 x^2 + \beta_2 xy \pmod{J^3} \\
	wy &\equiv \alpha_3 x^2 + \beta_3 xy \pmod{J^3} \\
	w^2 &\equiv \alpha_4 x^2 + \beta_4 xy \pmod{J^3}.
	\end{alignat*}
	With this, we obtain 
	\begin{alignat*}{4}
	0 &\equiv (xw) x &&\equiv x (wx) &&\equiv \alpha_2 x^3 + \beta_2 x^2y &&\pmod{J^4} \\
	0 &\equiv (xw) y &&\equiv x (wy) &&\equiv \alpha_3 x^3 + \beta_3 x^2y &&\pmod{J^4} \\
	0 &\equiv (xw) w &&\equiv x w^2 &&\equiv \alpha_4 x^3 + \beta_4 x^2y &&\pmod{J^4}.
	\end{alignat*}
	Comparing the coefficients yields $\alpha_2 = \beta_2 = \alpha_3 = \beta_3 = \alpha_4 = \beta_4 = 0$. This implies 
	$$0 \equiv yw^2 \equiv (yw) w \equiv \alpha_1 x^2w + \beta_1 xyw \equiv \beta_1 xyw \equiv \beta_1 (\alpha_1 x^3 + \beta_1 x^2y) \pmod{J^4},$$
	which yields $\beta_1 = 0$. Furthermore, we obtain 
	$$0 \equiv y (wy) \equiv (yw) y \equiv \alpha_1 x^2 y \pmod{J^4}$$ 
	and hence $\alpha_1 = 0$. This yields $[x,w], [y,w] \in J^3$ and hence $$J^2 = K(A) \subseteq F\{[x,y], [x,w], [y,w]\} + J^3 \subseteq F[x,y] + J^3,$$ which is a contradiction to $\dim J^2/J^3 = 2$.
	\item This follows by Theorem \ref{theo:soctaideal} together with (i) and (ii).  \qedhere
\end{enumerate}
\end{proof}

Hence every local trivial extension algebra $T$ of dimension at most $18$ satisfies $\soc(Z(T)) \trianglelefteq T$. The following example demonstrates that this bound is tight:

\begin{Example}\label{ex:soc20}
	Let $F$ be an algebraically closed field of characteristic two. We consider the unitary subalgebra $A$ of $\Mat_{10}(F)$ generated by the matrices 
	
	$$M = \begin{pmatrix}
	. & . & . & . & . & . & . & . & . & . \\
	1 & . & . & . & . & . & . & . & . & . \\
	. & . & . & . & . & . & . & . & . & . \\
	. & 1 & . & . & . & . & . & . & . & . \\
	. & . & 1 & . & . & . & . & . & . & . \\
	. & . & . & 1 & . & . & . & . & . & . \\
	. & . & . & . & 1 & . & . & . & . & . \\
	. & . & . & . & . & 1 & . & . & . & . \\
	. & . & . & . & . & . & 1 & . & . & . \\
	. & . & . & . & . & . & . & 1 & 1 & . \\
	\end{pmatrix} \text{ and  }\ 
	N = \begin{pmatrix}
	. & . & . & . & . & . & . & . & . & . \\
	. & . & . & . & . & . & . & . & . & . \\
	1 & . & . & . & . & . & . & . & . & . \\
	. & 1 & . & . & . & . & . & . & . & . \\
	. & 1 & . & . & . & . & . & . & . & . \\
	. & 1 & . & . & . & . & . & . & . & . \\
	. & 1 & . & 1 & 1 & . & . & . & . & . \\
	. & . & . & 1 & . & 1 & . & . & . & . \\
	. & . & . & . & 1 & 1 & . & . & . & . \\
	. & . & . & . & . & 1 & 1 & 1 & 1 & . \\
	\end{pmatrix}.
	$$
	Again, zero entries are represented by dots. One can check that these matrices satisfy the relations $M^6 = N^2 = 0$ as well as $NM = M^2 + MN + M^3 + M^2 N$. Moreover, $M^4N = M^5$ holds. It is easy to see that the set $B \coloneqq \{M^{\ell_1} N^{\ell_2} \colon \ell_1 \in \{0, \ldots, 4\},\ \ell_2 \in \{0,1\}\}$ is an $F$-basis for $A$. As before, we set $J \coloneqq J(A)$. 
	The Loewy layers $J^i/J^{i+1}$ ($i = 0, \ldots, 5$) of $A$ have the dimensions $1,2,2,2,2,1$, respectively. In particular, $A$ is a local algebra. One computes that $$J(Z(A)) = F\{M^4, M^5\} \subseteq F\left\{M^2, M^3 + M^3N, M^2  N + M^3 N, M^4, M^5\right\} = K(A)$$ holds. Thus $J(Z(A))$ is an ideal of $A$. Since $K(A)$ contains $M^2$, but not $M^3$, we obtain $K(A) + A \cdot J(Z(A)) = K(A) \neq A \cdot K(A)$. In particular, $K(A)$ is not an ideal of $A$. The trivial extension algebra $T \coloneqq T(A)$ has dimension $20$ and by Theorem \ref{theo:soctaideal}, $\soc(Z(T))$ is not an ideal of $T$.
\end{Example}

Combined with Theorem \ref{theo:dim16}, this yields $\dim A \in \{17,18,19,20\}$ for a symmetric local $F$-algebra $A$ of minimal dimension in which $\soc(Z(A))$ is not an ideal.

\section*{Acknowledgment}
The authors are grateful to Markus Linckelmann for a useful comment
concerning Section \ref{sec:symmetricalgebrasintro}. The results of this paper are part of the PhD-thesis of the first author, which she currently writes under the supervision of the second author. \nocite{BRE22}

\bibliographystyle{plain}
\bibliography{Symmetricalgebras.bib}

\clearpage

\appendix

\end{document}